\documentclass[12pt,a4paper,reqno]{amsart}

\usepackage[english]{babel}
\usepackage{amssymb,latexsym,amsfonts,amsthm,upref,amsmath}
\usepackage[foot]{amsaddr} 
\usepackage[margin=1in]{geometry} 
\usepackage[dvipsnames,x11names]{xcolor}
 \definecolor{myblue}{HTML}{003399}
\usepackage{hyperref}
\hypersetup{colorlinks,citecolor=myblue,filecolor=black,linkcolor=myblue,urlcolor=myblue}
\usepackage{enumerate}  
\usepackage{tikz}
\usepackage{float}
\usepackage{cleveref}
\usepackage{mathtools}
\usepackage{cite}
\usepackage{filecontents}
\makeatletter
\newcommand{\leqnomode}{\tagsleft@true}
\newcommand{\reqnomode}{\tagsleft@false}

\makeatother
\newtheorem*{thm*}{Theorem}
\newtheorem*{lem*}{Lemma}
\newtheoremstyle{prim}{}{}{\normalfont}{}{\bfseries}{.}{ }{}
\newtheoremstyle{stil}{}{}{\slshape}{}{\bfseries}{.}{ }{}
\theoremstyle{stil}
\newtheorem{thm}{Theorem}[section]
\newtheoremstyle{defi}{}{}{}{}{\bfseries}{.}{ }{}
\theoremstyle{defi}
\newtheorem{defn}[thm]{Definition}
\theoremstyle{defi}
\newtheorem{rem}[thm]{Remark}
\theoremstyle{stil}
\newtheorem*{mthm*}{Main Theorem}
\newtheorem*{kor*}{Corollary}
\newtheorem{pro}[thm]{Proposition}
\theoremstyle{stil}
\newtheorem{lem}[thm]{Lemma}
\theoremstyle{stil}
\newtheorem{kor}[thm]{Corollary}
\theoremstyle{prim}

\newenvironment{prf}{\noindent \textit{Proof.}}{\null\hfill$\qed$\hskip
2mm\vskip 2mm}

\newcommand{\wht}{\widehat}
\newcommand{\wvr}{\overline}
\newcommand{\wndr}{\underline}

\newcommand{\modd}{ \,{\rm mod\,\,}}

\newcommand{\fand}{\quad\text{and}\quad}
\newcommand{\Fand}{\qquad\text{and}\qquad}
\newcommand{\non}{\nonumber}
\newcommand{\beq}{\begin{equation}}
\newcommand{\eeq}{\end{equation}}

\newcommand{\ot}{\otimes}
\newcommand{\ts}{\hspace{1pt}}
\newcommand{\ndo}{\mathop{\mathrm{End}}}
\newcommand{\om}{\mathop{\mathrm{Hom}}}
\newcommand{\R}{\wvr{R}}

\newcommand{\RRR}{\wndr{R}}

\newcommand{\RRRR}{\wndr{\wvr{R}}}
\newcommand{\uh}{{\rm U}_h (\widehat{\mathfrak{gl}}_N)_c}
\newcommand{\uq}{{\rm U}_q (\widehat{\mathfrak{gl}}_N)}

\newcommand{\yhg}{{\rm Y}_{N,h}^{\text{tw}} }

\newcommand{\dyho}{{\rm DY}_{N,h,c}^{tw}  }

\newcommand{\dyhocrit}{{\rm DY}_{N,h,crit}^{tw}  }
\newcommand{\dyhot}{\widetilde{{\rm DY}}_{N,h,c}^{tw}  }
\newcommand{\dyhocritt}{\widetilde{{\rm DY}}_{N,h,crit}^{tw} }
\newcommand{\dyhoz}{{\rm DY}_{N,h,0}^{tw}  }

\newcommand{\Mcc}{\mathcal{M}^{c} }
\newcommand{\Mcz}{\mathcal{M}^{0} }
\newcommand{\Mccrit}{\mathcal{M}^{crit} }

\newcommand{\vac}{ \mathrm{\boldsymbol{1}}}
\newcommand{\CC}{\mathbb{C}}
\newcommand{\ZZ}{\mathbb{Z}}
\newcommand{\Sc}{\mathcal{S}}
\newcommand{\Ec}{\mathcal{E}}
\newcommand{\Rv}{\check{R}}
\newcommand{\tr}{ {\rm tr}}

\newcommand{\Vc}{\mathcal{V}^{2c}(\mathfrak{gl}_N)}
\newcommand{\Vcz}{\mathcal{V}^{0}(\mathfrak{gl}_N)}
\newcommand{\Vcdva}{\mathcal{V}^{c}(\mathfrak{gl}_N)}
\newcommand{\Vccrit}{\mathcal{V}^{crit}(\mathfrak{gl}_N)}

\newcommand{\bc}{\mathcal{B}}
\newcommand{\cdotrl}{\mathop{\hspace{-2pt}\underset{\text{RL}}{\cdot}\hspace{-2pt}}}
\newcommand{\cdotlr}{\mathop{\hspace{-2pt}\underset{\text{LR}}{\cdot}\hspace{-2pt}}}

\makeatletter
\def\smalloverbrace#1{\mathop{\vbox{\m@th\ialign{##\crcr\noalign{\kern3\p@}%
  \tiny\downbracefill\crcr\noalign{\kern3\p@\nointerlineskip}%
  $\hfil\displaystyle{#1}\hfil$\crcr}}}\limits}
\makeatother

\makeatletter
\def\smallunderbrace#1{\mathop{\vtop{\m@th\ialign{##\crcr
   $\hfil\displaystyle{#1}\hfil$\crcr
   \noalign{\kern3\p@\nointerlineskip}%
   \tiny\upbracefill\crcr\noalign{\kern3\p@}}}}\limits}
\makeatother

\makeatletter
\def\author@andify{%
  \nxandlist {\unskip ,\penalty-1 \space\ignorespaces}%
    {\unskip {} \@@and~}%
    {\unskip \penalty-2 \space \@@and~}%
}
\makeatother

\begin{document}

\title{Invariants of the extended twisted  $h$-Yangian}

\author{Lucia Bagnoli}
\address[L. Bagnoli]{Dipartimento di Matematica, Sapienza Universit\`{a} di Roma, P.le Aldo Moro 5, 00185 Rome, Italy}
\email{lucia.bagnoli@uniroma1.it}

\author{Slaven Ko\v{z}i\'{c}}
\address[S. Ko\v{z}i\'{c}]{University of Zagreb Faculty of Science, Department of Mathematics, Bijeni\v{c}ka cesta 30, 10000 Zagreb, Croatia}
\email{slaven.kozic@math.hr}

\author{Jian Zhang}
\address[J. Zhang]{School of Mathematics and Statistics, Central China Normal University,  Wuhan, Hubei 430079, China}
\email{jzhang@ccnu.edu.cn}

\begin{abstract}
We investigate the extended twisted $h$-Yangian ${\rm Y}_{N,h}^{\text{tw}}$, a certain algebra
which admits both the  orthogonal and the symplectic $h$-Yangian as its quotients.
We show that ${\rm Y}_{N,h}^{\text{tw}}$ is naturally equipped with the structure of restricted module  for a certain algebra ${\rm DY}_{N,h,c}^{tw}$, which resembles the quantum double, as well as   with the structure of
$\phi$-coordinated quasi module for the Etingof--Kazhdan quantum affine vertex algebra associated  with the trigonometric $R$-matrix of type $A$.
Finally, we demonstrate how the elements of the quantum Feigin--Frenkel center give rise to  explicit formulas for  
central elements of ${\rm DY}_{N,h,c}^{tw}$ and
 invariants of ${\rm Y}_{N,h}^{\text{tw}}$ at the critical level, as well as to commutative families in the  orthogonal and  symplectic $h$-Yangians.
\end{abstract}

 \maketitle

\allowdisplaybreaks
 
\section{Introduction}\label{intro}
\setcounter{equation}{0}
\numberwithin{equation}{section}

Twisted $q$-Yangians of orthogonal and symplectic type are two families of  coideal subalgebras of the type $A$ quantum affine algebra, which were   introduced by  Molev,   Ragoucy and Sorba \cite{MRS}.
They possess properties similar to those of their rational counterparts, the twisted Yangians \cite{MNO,O}, which are  coideal subalgebras of the type $A$   Yangian. In particular, the concept of Sklyanin determinant,
named  after   Sklyanin, who introduced
 the new type of   determinant  for a certain class of reflection algebras  \cite{Skly},  plays a key role in understanding the structure of their center \cite{MRS,JZ,M1}.
More recently, Lu \cite{Lu} proved that the twisted $q$-Yangians are  isomorphic to the affine  $\imath$quantum groups, in current presentation  \cite{LuWa},  associated to the symmetric pair of type  AI.

In this paper, we examine a connection between Etingof--Kazhdan's quantum affine vertex algebra $\Vcdva$ 
\cite{EK}, associated with the type $A$ trigonometric $R$-matrix, and the twisted $h$-Yangians, i.e. twisted $q$-Yangians    defined over the commutative ring $\CC[[h]]$. To do so, we introduce   
 the extended twisted $h$-Yangian ${\rm Y}_{N,h}^{\text{tw}}$, a certain algebra
which admits both the  orthogonal and the symplectic $h$-Yangian as its quotients.
The aforementioned connection is established by using Li's notion of $\phi$-coordinated quasi module \cite{Liphi}, while the main challenge in its construction is finding suitable annihilation operators on ${\rm Y}_{N,h}^{\text{tw}}$, so that they yield the corresponding quasi module map.

An analogous problem, in the setting of twisted Yangians and the type $A$ rational $R$-matrix, was studied by the second author and Serti\'{c} \cite{KS}.
Its solution relied on the realization of  the corresponding creation and annihilation operators in terms of the type $A$ centrally extended double Yangian \cite{I}. 
In contrast,  the annihilation operators in the trigonometric case, which is considered in this paper,  do not  seem to possess such a  realization in terms of the type $A$ quantum affine algebra.
Though this may seem unusual from the vertex algebraic perspective,  the similar phenomenon occurs in the Etingof--Kazhdan construction of quantum vertex algebra $\Vcdva$ associated with the trigonometric $R$-matrix.
To address this issue, we introduce a  new algebra ${\rm DY}_{N,h,c}^{tw}$, where $c\in\CC$, along with a certain wide class of ${\rm DY}_{N,h,c}^{tw}$-modules, the restricted modules.
Roughly speaking, this algebra contains both the creation and annihilation operators required for our construction,   and its defining relations are given in the form of three reflection equations.

The main results of the paper are Theorems \ref{thm_25} and \ref{thm_33}. The former defines a structure of restricted ${\rm DY}_{N,h,c}^{tw}$-module on the
extended $h$-twisted Yangian ${\rm Y}_{N,h}^{\text{tw}}$, and the latter states that every restricted ${\rm DY}_{N,h,c}^{tw}$-module is naturally  equipped with the structure of 
$\phi$-coordinated quasi $\Vc$-module.
As an application, we derive explicit formulas for the families of invariants in ${\rm Y}_{N,h}^{\text{tw}}$, with respect to the action of ${\rm DY}_{N,h,-N/2}^{tw}$, and for the families of central elements in a suitable completion of the  algebra ${\rm DY}_{N,h,-N/2}^{tw}$. Both families are   parametrized by Young diagrams and   naturally give  rise to commutative families in the  orthogonal and  symplectic $h$-Yangians.
Our construction relies on 
the fusion procedure for the Hecke algebra \cite{C,IMO,N}    and
 the explicit formulas \cite{BJK,KM} for the elements of the   center of $\Vcdva$ at the critical level $c=-N$.

\section{Preliminaries}\label{section_01}

In this section, we recall the trigonometric $R$-matrix of   type $A$  and the $R$-matrix realization of the quantum affine algebra  of   type $A$;  see     \cite{FRT,J,PS,RS} for more information. Next, following the exposition in \cite[Sect. 4]{BJK}, we recall   Etingof--Kazhdan's construction \cite[Thm. 2.3]{EK} of the quantum   vertex algebra associated to the aforementioned $R$-matrix. 

\subsection{Quantum affine algebra \texorpdfstring{$\uh$}{Uh(glN)c}}\label{section_0101}
Let $N\geqslant 2$.
Due to  \cite{FR}, there exists a unique formal  power series  
\beq\label{efodze00}
f_q(z)=1+\sum_{r=1}^\infty f_{q,r} \frac{z^r}{(1-z)^{r}} \in \CC(q)[[z]] 
\eeq  
  such that all $f_{q,r}(q-1)^{-r}$ are regular at $q=1$, which satisfies the identity
$$
f_q(zq^{2N})=f_q(z) (1-zq^2)(1-zq^{2N-2}) (1-z)^{-1}(1-zq^{2N})^{-1}.
$$
By setting $q=e^{h/2} $ in \eqref{efodze00}, we get a   formal power series in $ \CC[[h,z]],$
$$
f(z)\coloneqq f_{e^{h/2}}(z)=1+\sum_{r=1}^\infty f_{r} \frac{z^r}{(1-z)^{r}} ,\qquad\text{where}\qquad
f_r=f_{q,r}\big|_{q=e^{h/2}}.
$$

 Let $V=\ndo\CC^N\ot\ndo\CC^N.$ 
Introduce  the    $R$-matrix   $\R (z )\in V  [[ h]][z],$    
\begin{align}
\R (z ) =&\,\left(1-ze^{-h}\right)\sum_{i=1}^N e_{ii}\ot e_{ii} 
+ e^{-h/2}\left(1-z\right)\sum_{\substack{i,j=1\\i\neq j}}^N e_{ii}\ot e_{jj} \non\\
&+  \left(1-e^{-h }\right)z   \sum_{\substack{i,j=1\\i> j}}^N e_{ij}\ot e_{ji}
+  \left(1-e^{-h } \right) \sum_{\substack{i,j=1\\i< j}}^N e_{ij}\ot e_{ji},\label{rbarmatrix}
\end{align}
where $e_{ij}\in\ndo\CC^N$ are the matrix units. 
Next,  define the normalized $R$-matrix by
\beq\label{erofzed}
R(z)=  \frac{f(z)   \ts \R(z)}{1-ze^{-h}}.
\eeq
Clearly, the $R$-matrix \eqref{erofzed} belongs to $V[[z,h]].$ On the other hand, 
due to the form of the term  $f(z)$,  it can be also  regarded as an element  of $V(z)[[h]].$ It possesses the property
$R(z)^{t_1\ts t_2} = R_{21}(z)$,
where $t_k$ with $k=1,2$ stands for the matrix transposition $e_{ij}\mapsto e_{ji}$ applied on the $k$-th tensor factor of $V$. Moreover, it satisfies
the   {\em quantum Yang--Baxter equation},  
\beq\label{trig_ybe_12}
R_{12}(z_1)\ts R_{13}(z_1 z_2)\ts R_{23}(z_2)
 =   R_{23}(z_2)\ts R_{13}(z_1 z_2)\ts R_{12}(z_1) 
\eeq
and the {\em crossing symmetry identities,}
\beq\label{csym}
R(ze^{ Nh})^{t_1} \ts D_1\ts  ( R(z)^{-1})^{t_1}=D_1\Fand (R(z)^{-1})^{t_2} \ts D_2 \ts R(z e^{ Nh})^{t_2} = D_2,
\eeq
where
$D_1=D\ot 1,  D_2=1\ot D \in V$
and $D$ is the diagonal $N\times N$ matrix
\beq\label{diagonal}
D=\mathop{diag}\left(e^{ (N-1)h/2 },e^{ (N-3)h/2},\ldots ,e^{- (N-1)h/2} \right) .
\eeq 
Denote by
``$\cdotrl$'' (resp.  ``$\cdotlr$'')
the standard multiplication in $(\ndo\CC^N)^{\text{op}} \ot\ndo\CC^N$
(resp. $ \ndo\CC^N  \ot(\ndo\CC^N)^{\text{op}}$), where $A^{\text{op}}$ denotes the opposite algebra of a given associative algebra $A$. The crossing symmetry properties \eqref{csym} imply that the $R$-matrix $R(z)$ is invertible with respect to such multiplications, e.g.,   the first identity implies
$$
R(z) \cdotrl R(z)^\sim = R(z) \cdotlr R(z)^\sim =1\quad\text{for}\quad R(z)^\sim = 
D_1^{-1}\ts R(ze^{-Nh})\ts D_1.
$$
Throughout the paper, we shall often denote by the symbol ``$\sim$''   the inverse of the $R$-matrix  with respect to such multiplications.
Finally, we remark that the $R$-matrix
$$
\wht{R}(z)=\frac{ \R(z)}{1-ze^{-h}} = \frac{R(z)}{f(z)} 
$$
satisfies the {\em unitarity condition},
$$
\wht{R}(z)\ts \wht{R}_{21}(1/z) =\wht{R}_{21}(1/z)\ts \wht{R}(z)=1.
$$

Let $c\in\CC .$ The {\em quantum affine algebra $\uh$ at the level $c$} is an $h$-adically complete associative algebra over the ring $\CC[[h]]$ generated by the elements $l_{ij}^{\pm(r)},$ where $i,j=1,\ldots ,N $ and $r=0,1,\ldots.$ Its defining relations are given in terms of the matrices
$$
L^\pm(z)=\sum_{i,j=1}^N e_{ij} \ot l_{ij}^\pm (z),\quad\text{where}\quad
l_{ij}^\pm (z)=\delta_{ij}\mp h\sum_{r=0}^\infty l_{ij}^{\pm(r)} z^{\pm r},
$$
as follows:
\begin{gather}
l_{ij}^{-(0)}=l_{ji}^{+(0)}=0\text{ for all }1\leqslant i<j\leqslant N,\non\\
R(x/y)\ts L_1^\pm(x)\ts L_2^\pm(y)=L_2^\pm(y)\ts L_1^\pm(x)\ts R(x/y),\label{dere2l_2}\\
R(xe^{-hc/2}/y)\ts L_1^+(x)\ts L_2^-(y)=L_2^-(y)\ts L_1^+(x)\ts R(xe^{ hc/2}/y).\label{dere2l_3}
\end{gather} 
Throughout the paper, we often use the standard tensor notation, where the subscript indicates the copy in the corresponding tensor product algebra, e.g., 
$$
L_r^\pm(x)=\sum_{i,j=1}^N 1^{\ot (r-1)}\ot e_{ij} \ot 1^{\ot(n-r)}\ot l_{ij}^\pm (z)\in(\ndo\CC^N)^{\ot n}\ot\uh[[z^{\pm 1}]].
$$
In particular, in the defining relations \eqref{dere2l_2} and \eqref{dere2l_3} , we have $n=2$ and $r=1,2$.

\subsection{Quantum affine vertex algebra \texorpdfstring{$\Vcdva$}{Vc(glN)}}\label{section_0102}
Denote by   $\CC_*(u)$     the localization of the ring of formal Taylor series $\CC[[u]]$ at $\CC[u]\setminus\left\{0\right\}$. 
The unique embedding 
$\CC_*(u)\hookrightarrow\CC((u))   $ 
 naturally extends to the embedding
$\iota_u \colon \CC_*(u)[[h]]\hookrightarrow\CC((u)) [[h]].$
Let us regard the expression in \eqref{erofzed} as a formal Taylor   series in $h.$
By applying the substitution $z=e^u$ and then the map $\iota_u ,$ one obtains an element $R^\prime(u)$ of
$\ndo\CC^N \ot\ndo\CC^N  ((u))[[h]]$;  see \cite{KM} for more details. 
Furthermore, by \cite[Prop. 1.2]{EK4} and \cite[Prop. 2.1]{KM}, there exists a unique $\psi\in 1+h\CC[[h]]$ such that the $R$-matrix 
\beq\label{erofu}
R(e^u)\coloneqq \psi \ts R^\prime(u)\in \ndo\CC^N \ot\ndo\CC^N ((u))[[h]]
\eeq
satisfies the {\em unitarity condition},
\beq\label{unitarity}
R_{12}(e^u) \ts R_{21}(e^{-u}) =R_{21}(e^{-u})\ts R_{12}(e^u)=  1,
\eeq
and  
the {\em crossing symmetry identities},
$$
R(e^{u+Nh})^{t_1} \ts D_1\ts  ( R(e^u)^{-1})^{t_1}=D_1\fand (R(e^u)^{-1})^{t_2} \ts D_2 \ts R( e^{u+Nh})^{t_2} = D_2.
$$
In addition, the $R$-matrix \eqref{erofu}   
satisfies the   {\em quantum Yang--Baxter equation},
$$
R_{12}(e^u)\ts  R_{13}(e^{u+v}) \ts R_{23}(e^v)=R_{23}(e^v)\ts R_{13}(e^{u+v}) \ts R_{12}(e^u).
$$

We now follow  \cite{EK3} to introduce a certain version of the    quantized universal enveloping    algebra; see also \cite{FRT,RS}.
It is defined as a topologically free associative algebra $\textrm{U}(R)$ over the ring $\CC[[h]]$ generated by the elements $t_{ij}^{(-r)}$, where  $i,j=1,\ldots ,N$ and $r=1,2,\ldots .$ Define the matrix $T^+ (u) $   by
$$
T^+ (u) =\sum_{i,j=1}^N e_{ij}\ot t^+_{ij}  (u),
\quad\text{where}\quad
 t^+_{ij} (u)=\delta_{ij}-h\sum_{r=1}^{\infty}t_{ij}^{(-r)}u^{r-1} .
$$
The defining relations for $\textrm{U}(R)$ are given by
\beq\label{rtt}
R(e^{u-v})\ts T^+_{1} (u)\ts T^+_2  (v)=  T^+_2  (v)\ts T^+_{1} (u)\ts R(e^{u-v}).
\eeq

Let $\vac$ be the unit in   $\textrm{U}(R).$   We shall  consider   $T^+ (u)$  as  an operator series  on $\textrm{U}(R)$,
where its action is given by the algebra multiplication. 
Let us recall  \cite[Lemma 2.1]{EK}. 

\begin{lem}\label{lemma21}
For any $c\in\CC$ there exists a unique  operator series
$$T^- (u)\in\ndo\CC^N \ot \om( \textrm{U}(R),\textrm{U}(R) [[u^{\pm 1}]] )$$
which satisfies
$$
  T^-_{0} (u)\ts T_{1}^+ (v_1)\ldots T_{n}^+ (v_n)\vac=
R_{01}^{-1}\ldots R_{0n}^{-1}
\ts T_{1}^+ (v_1)\ldots T_{n }^+ (v_n) \vac\ts  R_{0n}^- \ldots R_{01}^-
$$
for all $n\geqslant 1$, where
$ 
R_{0i}^{-1} = R_{0i} (e^{u-v_i+hc/2})^{-1}$
 and
$R_{0i}^-=R_{0i}  (e^{u-v_i-hc/2})
$.
\end{lem}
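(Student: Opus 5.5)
Uniqueness is immediate, and existence reduces to showing that the prescribed formula is compatible with the defining relations \eqref{rtt} of $\textrm{U}(R)$. By the Poincaré--Birkhoff--Witt type theorem for $\textrm{U}(R)$ (it is topologically free and spanned, $h$-adically, by the coefficients of products $T_1^+(v_1)\cdots T_n^+(v_n)\vac$, together with $\vac$ itself), any operator is determined by its values on the coefficients of these products. For $n=0$ we take $T^-(u)\vac=\vac$, the natural $n=0$ case of the formula. Hence there is at most one operator series $T^-(u)$ with the stated property.

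For existence, we begin on the tensor algebra $\mathcal{T}$ freely generated by the $t_{ij}^{(-r)}$. There we \emph{define} the operator
\[
\widetilde T^-_0(u)\colon\ T_1^+(v_1)\cdots T_n^+(v_n)\vac\ \longmapsto\ R_{01}^{-1}\cdots R_{0n}^{-1}\,T_1^+(v_1)\cdots T_n^+(v_n)\vac\,R_{0n}^-\cdots R_{01}^-,
\]
coefficientwise in $v_1,\dots,v_n$. The first point to check is that this gives a well-defined element of $\ndo\CC^N\ot\om(\mathcal{T},\mathcal{T}[[u^{\pm1}]])$. Each $R_{0i}(e^{u-v_i\pm hc/2})$ and its inverse lie in $\ndo\CC^N\ot\ndo\CC^N((u-v_i))[[h]]$. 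We expand these in $u^{\pm1}$ with $v_i$ Taylor, which is legitimate because the $T_i^+(v_i)$ are power series in $v_i$. Modulo each $h^k$, only finitely many negative powers of $u$ then appear for a fixed coefficient in $v$.

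The main step is to show that $\widetilde T^-(u)$ preserves the two-sided ideal $\mathcal{J}$ generated by the coefficients of the relations \eqref{rtt}. It then descends to $\textrm{U}(R)=\mathcal{T}/\mathcal{J}$ (after $h$-adic completion and saturation). Concretely, we must show that applying $\widetilde T^-_0(u)$ to
\[
T_1^+(v_1)\cdots\bigl(R_{i,i+1}(e^{v_i-v_{i+1}})T_i^+T_{i+1}^+-T_{i+1}^+T_i^+R_{i,i+1}(e^{v_i-v_{i+1}})\bigr)\cdots T_n^+(v_n)\vac
\]
yields an element of $\mathcal{J}$. The factor $R_{i,i+1}$ acts only on the spaces $i,i+1$, so we may move it past the outer $R_{0j}$ with $j\neq i,i+1$. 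The key identity is the Yang--Baxter equation in the form
\[
R_{i,i+1}(e^{v_i-v_{i+1}})R_{0i}^{-1}R_{0,i+1}^{-1}=R_{0,i+1}^{-1}R_{0i}^{-1}R_{i,i+1}(e^{v_i-v_{i+1}}),
\]
obtained by inverting the YBE with shifted spectral parameters; the shift $\pm hc/2$ is the same in both factors, so it cancels in the differences. An analogous identity holds for the right factors $R_{0,i+1}^-R_{0i}^-$. Using these, the image becomes the conjugate (by the outer $R$-products, with indices $i,i+1$ swapped in the order) of a relation element. Since the ideal is two-sided and the conjugating factors are scalar in the algebra part, the image lies in $\mathcal{J}$.

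The main obstacle is the careful handling of the expansion directions: the YBE must be applied to series that are all expanded in the same region, $|u|>|v_i|$ in the formal sense. The problem is that $R_{i,i+1}(e^{v_i-v_{i+1}})$ is expanded in $v_i-v_{i+1}$, which is not a Taylor series. To deal with this, I would multiply through by suitable polynomial denominators. Alternatively, I would work in $\CC_*(\cdot)[[h]]$-valued rational functions before applying the $\iota$ maps, and use that $\iota$ is an injective algebra homomorphism, so identities proven rationally transfer. Finally, $h$-adic continuity and the saturation of $\mathcal{J}$ (torsion-freeness) give the operator on the completed quotient $\textrm{U}(R)$, completing existence.
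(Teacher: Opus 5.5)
Your route is the standard one: the paper gives no proof of this lemma and only quotes Etingof--Kazhdan, whose argument is exactly that the prescribed formula is compatible with the RTT relation by the Yang--Baxter equation. Your key identity is also correct: after inversion it is \eqref{trig_ybe_12} in the spaces $0,i,i+1$ with $z_1=e^{u-v_i+hc/2}$ and $z_2=e^{v_i-v_{i+1}}$.

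One step, however, does not work as written, because of the normalization $t^+_{ij}(u)=\delta_{ij}-h\sum_{r} t_{ij}^{(-r)}u^{r-1}$. The coefficients of $T^+_1(v_1)\cdots T^+_n(v_n)\vac$ are exactly the words in the elements $\delta_{ij}-h\,t_{ij}^{(-1)}$ and $-h\,t_{ij}^{(-r)}$ with $r\geqslant 2$. These form a basis of the free algebra $\mathcal T$ only after inverting $h$, and their $\CC[[h]]$-span lies in $\CC[[h]]\vac+h\mathcal T$. So your claim that $\textrm{U}(R)$ is $h$-adically spanned by them is false. Uniqueness still holds, because $h^k m$ lies in the span for every monomial $m$ of degree $k$ and the target is torsion free. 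The more serious consequence is for existence: ``defining'' $\widetilde T^-(u)$ on $\mathcal T$ by the formula only produces a $\CC((h))$-linear map. You must still show that it sends $\mathcal T$ into $\mathcal T((u))_h$ with no negative powers of $h$. Your well-definedness paragraph treats only the $u$-expansion, which is a separate issue.

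The repair is short, because the formula is multiplicative. Put the space-$0$ component of $R_{0k}(e^{u-v_k+hc/2})^{-1}$ into $\ndo\CC^N$ and that of $R_{0k}(e^{u-v_k-hc/2})$ into $(\ndo\CC^N)^{\mathrm{op}}$. Then $T^+_k(v_k)\mapsto R_{0k}^{-1}\,T^+_k(v_k)\,R^-_{0k}$ prescribes an algebra homomorphism $\Psi\colon\mathcal T\to\ndo\CC^N\ot(\ndo\CC^N)^{\mathrm{op}}\ot\mathcal T((u))_h$, and $\widetilde T_0^-(u)=\mu\circ\Psi$ with $\mu(a\ot b\ot x)=ab\ot x$.

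Integrality then comes for free. Since $R(e^w)\equiv 1 \bmod h$, we get $R_{0k}^{-1}T^+_k(v_k)R^-_{0k}\equiv 1\bmod h$, so the values $\Psi(t_{ij}^{(-r)})$ read off from this involve no $h^{-1}$; hence $\Psi$ is defined on the $\CC[[h]]$-algebra $\mathcal T$ itself.

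Because $\Psi$ is an algebra map, you also only need the case $n=2$, namely $\Psi$ of the relation itself. This is exactly your Yang--Baxter computation, done once for the space $0$ and once for the opposite copy. Then $\mu$ maps $\ndo\CC^N\ot(\ndo\CC^N)^{\mathrm{op}}\ot\mathcal J$ into $\ndo\CC^N\ot\mathcal J$.

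Finally, your worry about expanding $R_{i,i+1}(e^{v_i-v_{i+1}})$ disappears if you use the equivalent relation \eqref{rbartt}. Its $R$-matrix is a power series in $v_i-v_{i+1}$, and the scalar factors do not affect the Yang--Baxter equation.
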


For any $n\geqslant 1$ and the variables $u=(u_1,\ldots ,u_n)$ define
$$
T_{[n]}^{\pm} (u )=T_1^\pm( u_1)\ldots T_n^\pm ( u_n)
\fand
T_{[n]}^{\pm} (u|z)=T_1^\pm(z+u_1)\ldots T_n^\pm (z+u_n).
$$ 
Finally, we recall the Etingof--Kazhdan  construction \cite[Thm. 2.3]{EK}.

\begin{thm}\label{EK:qva}
For any $c\in\CC,$ there exists a unique   quantum vertex algebra  structure\footnote{The theorem of Etingof and Kazhdan \cite[Thm. 2.3]{EK} provides an explicit expression for the underlying braiding map as well. However, we do not use it in this paper, so we omitted it from our exposition.} on the $\CC[[h]]$-module $\Vcdva\coloneqq \textrm{U}(R)$
  such that the vacuum vector is $\vac\in \textrm{U}(R) $	and
	the vertex operator map $Y(\cdot ,z)$ is defined by
\beq\label{EK_vo}
Y\big(T^+_{[n]}  (u)\vac,z\big)=T^+_{[n]}  (u|z)\ts T_{[n]}^- (u|z+hc/2)^{-1}. 
\eeq
\end{thm}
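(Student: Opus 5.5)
We would prove Theorem~\ref{EK:qva} through the standard existence criterion for ($h$-adic) quantum vertex algebras: given a topologically free $\CC[[h]]$-module $V$ with vacuum $\vac$ and a family of mutually $\mathcal{S}$-local fields generating $V$, there is a unique quantum vertex algebra structure on $V$ in which these fields are the vertex operators of the generators. This is Li's result for $h$-adic nonlocal vertex algebras. The generating fields are the matrix entries of
$$
T(z)\coloneqq T^+(z)\,T^-(z+hc/2)^{-1}\in \ndo\CC^N\ot\om\big(\textrm{U}(R),\textrm{U}(R)((z))[[h]]\big),
$$
where $T^-(u)$ is the operator from Lemma~\ref{lemma21}.

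The first step is to show that $T(z)$ is a well-defined field on $\textrm{U}(R)$. We would use the explicit formula of Lemma~\ref{lemma21} together with the fact that $R(e^{u})$ lies in $\ndo\CC^N\ot\ndo\CC^N((u))[[h]]$. Modulo every power $h^k$, the action of $T^-(z)^{\pm1}$ on a fixed vector $T^+_{[n]}(v)\vac$ involves only finitely many negative powers of $z$, and $T^-(z)^{-1}$ exists because $T^-(z)\equiv 1 \modd h$. We also need the vacuum and creation properties: $T^-(u)\vac=\vac$ (the case $n=0$ of Lemma~\ref{lemma21}) and $T^+(z)\vac\in \textrm{U}(R)[[z]]$, so that $T(z)\vac|_{z=0}=T^+(0)\vac$.

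The core computation, and the main obstacle, is the $\mathcal{S}$-locality of $T(z)$. We would first derive the exchange relations
$$
R(e^{z_1-z_2})\,T^-_1(z_1)\,T^-_2(z_2)=T^-_2(z_2)\,T^-_1(z_1)\,R(e^{z_1-z_2})
$$
and
$$
R(e^{z_1-z_2+hc/2})\,T^-_1(z_1)\,T^+_2(z_2)=T^+_2(z_2)\,T^-_1(z_1)\,R(e^{z_1-z_2-hc/2}).
$$
Both would be checked on the spanning vectors $T^+_{[n]}(v)\vac$ using Lemma~\ref{lemma21}, the relation \eqref{rtt} and the Yang--Baxter equation. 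Combining these with \eqref{rtt} and unitarity \eqref{unitarity} gives
$$
T_1(z_1)\,T_2(z_2)=R_{12}(e^{z_1-z_2})^{-1}\cdots\, T_2(z_2)\,T_1(z_1)\cdots
$$
with explicit conjugating products of shifted $R$-matrices, namely the braiding $\mathcal{S}(z)$. The delicate point is that the product $T^+T^{-,-1}T^+T^{-,-1}$ can be reordered only after multiplying by suitable powers of $(z_1-z_2)$, and this has to be done modulo each $h^k$. The crossing symmetry identities are not needed here, but they ensure that the inverses appearing are well defined as series.

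The final step is uniqueness together with formula \eqref{EK_vo}. Applying the iterate (normal-ordered product) formula to the local fields $T_1(z+u_1),\dots,T_n(z+u_n)$, and using the vacuum property, gives
$$
Y\big(T^+_{[n]}(u)\vac,z\big)=T^+_{[n]}(u|z)\,T^-_{[n]}(u|z+hc/2)^{-1}.
$$
This requires one more check: the ordering of the products $T^+_{[n]}$ and $T^-_{[n]}$ produced by the iterates matches \eqref{EK_vo}, which again follows from the exchange relations above. Since the vectors $T^+_{[n]}(u)\vac$ span $\textrm{U}(R)$ topologically, the structure is determined uniquely.
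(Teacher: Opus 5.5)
The paper does not prove this statement. It is quoted from \cite[Thm.~2.3]{EK}, where the braiding is written down explicitly (cf.\ the footnote) and the quantum vertex algebra axioms are established for it. Your route through Li's construction theorem for $\mathcal{S}$-local fields is different from that, and it has a genuine gap at its starting point.

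Li's theorem produces only an $h$-adic \emph{weak} quantum vertex algebra. That is, an $h$-adic nonlocal vertex algebra in which each pair of vertex operators is $\mathcal{S}$-local with rational coefficients depending on the pair. A quantum vertex algebra in the sense of the statement also carries a single braiding $\mathcal{S}(z)$ on the completed tensor square of $\Vcdva$. This braiding must be a unitary solution of the quantum Yang--Baxter equation and must satisfy the shift condition and the hexagon identity. The asserted uniqueness concerns this $\mathcal{S}$ as well, whereas your final sentence only shows that $Y$ is determined by \eqref{EK_vo}.

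So an extra step is needed. One option is to prove that $\Vcdva$ is nondegenerate, presumably via its classical limit $\Vcdva/h\Vcdva$, a universal affine vertex algebra of $\mathfrak{gl}_N$. You would then use Li's results that a nondegenerate $h$-adic weak quantum vertex algebra has a unique braiding with all these properties. The other option is to define EK's braiding explicitly on the vectors $T^+_{[n]}(u)\vac\otimes T^+_{[m]}(v)\vac$ and verify the axioms directly.

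Two further points need repair, though they are not obstructions.

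First, your exchange relations are correct consequences of Lemma~\ref{lemma21} and the Yang--Baxter equation. However, combining them does not give a conjugation of $T_2T_1$. Here the mixed relation is used expanded in nonnegative powers of $z_2$, and its relabelled form, together with \eqref{unitarity}, expanded in nonnegative powers of $z_1$. The result is
\begin{equation*}
T_1(z_1)\,R_{12}(e^{z_1-z_2+hc})^{-1}\,T_2(z_2)\,R_{12}(e^{z_1-z_2})=R_{12}(e^{z_1-z_2})^{-1}\,T_2(z_2)\,R_{12}(e^{z_1-z_2-hc})\,T_1(z_1),
\end{equation*}
valid after multiplication by a large power of $z_1-z_2$ modulo each $h^k$; this is the analogue of \eqref{qccr}. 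The $R$-matrices sit between the fields. This is still $\mathcal{S}$-locality, because the middle factors act on the matrix indices of the generating vectors. They can be removed with a `$\sim$'-inverse for the multiplication $\cdotrl$ as in Section~\ref{section_0101}, which exists because everything is $1$ modulo $h$.

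Second, \eqref{EK_vo} does not follow from the vacuum property plus reordering, because $T_1(u_1)T_2(u_2)\vac\neq T^+_{[2]}(u)\vac$. What makes it work is the following consequence of Lemma~\ref{lemma21}:
\begin{equation*}
T_1(z_1)\,R_{12}(e^{z_1-v+hc})^{-1}\,T_2^+(v)\vac=T_1^+(z_1)\,T_2^+(v)\vac\,R_{12}(e^{z_1-v})^{-1}.
\end{equation*}
This identity and its analogue with $T^+_{[n]}(v)\vac$ express $T^+_{[n+1]}(u)\vac$ as an $R$-dressed value of $Y(a,z_0)b$. Insert this into the $h$-adic iterate formula $Y(Y(a,z_0)b,z)=\big((z_1-z)^pY(a,z_1)Y(b,z)\big)\big|_{z_1=z+z_0}z_0^{-p}$ modulo $h^k$. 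Using your exchange relations then yields \eqref{EK_vo} by induction on $n$; this computation should be written out.
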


\section{Extended twisted \texorpdfstring{$h$}{h}-Yangian}\label{section_02}

In this section, we study a certain  algebra  which is motivated by the realization of   the orthogonal and symplectic twisted $q$-Yangians  as  coideal subalgebras  of the quantum affine algebra $\uq,$ given by Molev,   Ragoucy and Sorba \cite{MRS}. We refer to it as the {\em extended twisted $h$-Yangian $\yhg$}. It  is defined as 
an associative algebra over the ring $\CC[[h]]$ generated by the elements $b_{ij}^{ (r)},$ where $i,j=1,\ldots ,N $ and $r=0,1,\ldots.$ Its defining relations are expressed in terms of the matrix
$$
B (z)=\sum_{i,j=1}^N e_{ij} \ot b_{ij}  (z),\quad\text{where}\quad
b_{ij}  (z)=\delta_{ij}- h\sum_{r=0}^\infty b_{ij}^{ (r)} z^{  r},
$$
as  the   reflection equation 
\begin{gather}
R(x/y)\ts B_1 (x)\left(R_{21}(xy)^{-1}\right)^{t_1}  B_2 (y)
=
B_2 (y)\left(R_{21}(xy)^{-1}\right)^{t_1}B_1 (x)\ts R(x/y).\label{re}
\end{gather}
 Throughout the paper, we assume that  the algebra $\yhg$ is $h$-adically completed. 

The next proposition can be verified by a direct computation which closely follows the proof of   \cite[Prop. 4.20]{JZ}. 

\begin{pro}\label{pro_21}
\begin{enumerate}
\item
The
assignment  
\beq\label{map1_10}
B (z)\mapsto L^+(ze^{hc/2}) \ts L^-(1/z)^t 
\eeq 
defines an algebra homomorphism $\yhg\to\uh.$ 
\item
Suppose that $N $ is even and let 
$$G=e^{-h/2}\sum_{r=1}^n e_{2r-1\ts 2r} - \sum_{r=1}^n e_{2r\ts 2r-1}\quad\text{for}\quad n=N/2 .$$ The
assignment  
\beq\label{map2_10}
B (z)\mapsto L^+(ze^{hc/2})\ts G \ts L^-(1/z)^t 
\eeq
defines an algebra homomorphism $\yhg\to\uh.$ 
\end{enumerate}
\end{pro}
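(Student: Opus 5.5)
Since $\yhg$ is defined by the single reflection equation \eqref{re}, it suffices to check that the images $\mathcal{B}(z)=L^+(ze^{hc/2})\,L^-(1/z)^t$ and $\mathcal{B}(z)=L^+(ze^{hc/2})\,G\,L^-(1/z)^t$ satisfy \eqref{re} in $\ndo\CC^N\ot\ndo\CC^N\ot\uh[[x^{\pm1},y^{\pm1}]]$. The images are well defined: the coefficient of $z^r$ involves only finitely many products $l^{+(a)}l^{-(b)}$ with $a-b=r$ modulo higher powers of $h$, so everything converges $h$-adically. The constant term is $\delta_{ij}$ modulo $h$, as the form of $b_{ij}(z)$ requires.

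For the first map I will move the factors of the left side of \eqref{re} past one another using \eqref{dere2l_2}, \eqref{dere2l_3} and their partially transposed forms; a direct computation in the spirit of the proof of \cite[Prop.~4.20]{JZ}.
\begin{enumerate}
\item Write the left side as $R(x/y)\,L^+_1(xe^{hc/2})\,L^-_1(1/x)^{t_1}\,\big(R_{21}(xy)^{-1}\big)^{t_1}\,L^+_2(ye^{hc/2})\,L^-_2(1/y)^{t_2}$.
\item The middle product $L^-_1(1/x)^{t_1}\,(R_{21}(xy)^{-1})^{t_1}\,L^+_2(ye^{hc/2})$ is handled with the mixed relation \eqref{dere2l_3}. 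Take \eqref{dere2l_3} with $L^+$ in copy $2$ and $L^-$ in copy $1$, write it as $L^-_1 L^+_2 = R(\cdot)^{-1}L^+_2L^-_1R(\cdot)$, and apply the partial transposition $t_1$. This reverses the order of the $t_1$-transposed factors in copy $1$, turning $L^-_1(1/x)^{t_1}\,(R_{21}^{-1})^{t_1}\,L^+_2$ into $L^+_2\,(R_{21}^{-1})^{t_1}\,L^-_1(1/x)^{t_1}$, up to the $R$-factors. The shift $e^{\pm hc/2}$ built into the argument $ze^{hc/2}$ is what makes the spectral parameters match, $xy$ on both sides.
\item Then $R(x/y)\,L^+_1(xe^{hc/2})\,L^+_2(ye^{hc/2})=L^+_2L^+_1R(x/y)$ by \eqref{dere2l_2}.
\item The remaining $R(x/y)$ is moved past $L^-_1(1/x)^{t_1}L^-_2(1/y)^{t_2}$ using the $L^-L^-$ relation \eqref{dere2l_2} with arguments $1/x,1/y$, transposed in both copies. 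Here $R(z)^{t_1t_2}=R_{21}(z)$ converts $R(y/x)$ into the needed $R_{21}$-type factor; any leftover scalar ratios are handled by the unitarity of $\wht R$.
\end{enumerate}

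The main obstacle is step 2. Partial transposition of a product is not multiplicative: $(AB)^{t_1}\neq A^{t_1}B^{t_1}$ when both factors are nontrivial in copy $1$ with noncommuting entries. I will instead take matrix entries of \eqref{dere2l_3} and re-sum. Equivalently, I will rewrite it in the form $L^+_2\,(R^{-1})^{t_1}\,L^-_1{}^{t_1}=L^-_1{}^{t_1}\,(R^{-1})^{t_1}\,L^+_2$. To invert the transposed $R$-matrices I will use the crossing symmetry \eqref{csym}, equivalently the ``$\cdotrl$''-inverse $R(z)^\sim=D_1^{-1}R(ze^{-Nh})D_1$. The diagonal factors $D$ appear symmetrically on both sides and cancel.

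For part (2) the same computation applies, with $G$ inserted between $L^+(ze^{hc/2})$ and $L^-(1/z)^t$. The only additional input is the identity
$$G_1\,\big(R_{21}(z)^{-1}\big)^{t_1}\,G_2=G_2\,\big(R_{21}(z)^{-1}\big)^{t_1}\,G_1 ,$$
which is equivalent to $G_1G_2^{\,t}$-type invariance of $R$, namely $G_1^{t}R(z)G_1^{-t}\propto R^{t_1}$-conjugation. This is a finite check on the explicit matrices: $G$ is the $q$-deformed symplectic form, and the identity says that $\R(z)$ commutes with $G\ot G$ in the appropriate twisted sense. I will verify it on the basis $e_{ij}\ot e_{kl}$ by using the block structure of $G$, in which each pair $(2r-1,2r)$ is an antisymmetric block. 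With this identity in hand, the steps of the first computation go through verbatim.
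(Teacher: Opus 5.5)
Your route (push factors past each other using the $RLL$ relations and their partially transposed forms) is the direct computation the paper defers to \cite{JZ}. As written, however, the plan has genuine gaps.

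\textbf{Part (1).} The partially transposed mixed relation is obtained by conjugating the swapped form of \eqref{dere2l_3} by $L^+_2$ and then applying $t_1$. After the conjugation, the only factors acting on copy $1$ are $L^-_1$ and a scalar $R$-matrix, so no crossing symmetry is needed. The relation reads
\[
L^-_1(1/x)^{t_1}\big(R_{21}(xy)^{-1}\big)^{t_1}L^+_2(ye^{hc/2})=L^+_2(ye^{hc/2})\big(R_{21}(xye^{hc})^{-1}\big)^{t_1}L^-_1(1/x)^{t_1}.
\]
So the argument becomes $xye^{hc}$, not $xy$. This is harmless, since the same shift occurs on the right-hand side of \eqref{re} via $(R_{21}(w)^{-1})^{t_1}=(R(w)^{-1})^{t_2}$. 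After your steps 2 and 3 you must compare
\[
L^+_2L^+_1\,R(x/y)\big(R_{21}(xye^{hc})^{-1}\big)^{t_1}L^-_1{}^{t_1}L^-_2{}^{t_2}
\quad\text{and}\quad
L^+_2L^+_1\big(R_{21}(xye^{hc})^{-1}\big)^{t_1}L^-_2{}^{t_2}L^-_1{}^{t_1}R(x/y).
\]
Step 4 cannot be done as you describe, because the transposed $R$-matrix stands between $R(x/y)$ and the $L^-$'s. The missing ingredient is that $\R(s)$ commutes with $\R(w)^{t_1}$ for independent $s,w$, the trigonometric analogue of $PQ=QP$. This holds because both operators preserve the splitting of $\CC^N\ot\CC^N$ into the span of the $e_i\ot e_i$ and the span of the $e_a\ot e_b$ with $a\neq b$. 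Moreover, $\R(s)$ is a scalar on the first summand and $\R(w)^{t_1}$ is a scalar on the second. Only after this commutation does the transposed $L^-L^-$ relation finish the proof.

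\textbf{Part (2).} The identity you propose, $G_1(R_{21}(z)^{-1})^{t_1}G_2=G_2(R_{21}(z)^{-1})^{t_1}G_1$, is false for $h\neq 0$. Already for $N=2$ and $G=\alpha e_{12}+\beta e_{21}$, the coefficient of $e_1\ot e_2$ in the image of $e_1\ot e_2$ is $(1-e^{-h})\alpha^2$ under $G_1\R(w)^{t_1}G_2$ but $(1-e^{-h})w\beta^2$ under $G_2\R(w)^{t_1}G_1$. Nor does the first computation go through verbatim, since $R(x/y)$ does not commute with $G_1(R_{21}(\cdot)^{-1})^{t_1}G_2$.

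The bookkeeping above shows what is actually required: the constant matrix $G$ must itself satisfy the reflection equation,
\[
R(x/y)\,G_1\big(R_{21}(w)^{-1}\big)^{t_1}G_2=G_2\big(R_{21}(w)^{-1}\big)^{t_1}G_1\,R(x/y)
\]
for independent $x/y$ and $w$. Checking this for $N=2$ on the span of $e_1\ot e_2$ and $e_2\ot e_1$ forces $\beta=-e^{-h/2}\alpha$, i.e.\ $G\propto e^{h/2}e_{12}-e_{21}$. So with the prefactor $e^{-h/2}$ as printed the check fails, presumably a misprint.

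Finally, in part (2) the image is congruent to $G\not\equiv 1$ modulo $h$. Your claim that the constant term is $\delta_{ij}$ modulo $h$ therefore fails there, and the normalization $b_{ij}(z)=\delta_{ij}-h\sum_r b_{ij}^{(r)}z^r$ must be adjusted for the assignment to make sense over $\CC[[h]]$.
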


\begin{rem}\label{tw_10}
By the realization \cite[Sect. 3.3]{MRS} of the twisted $h$-Yangians, the image of the map \eqref{map1_10} (resp. \eqref{map2_10}) is isomorphic to the ($h$-adically completed) orthogonal (resp. symplectic) twisted $h$-Yangian defined over the ring $\CC[[h]]$.
\end{rem}

The reflection equation \eqref{re} can be generalized as follows. Let $n$ and $m$ be positive integers, $x=(x_1,\ldots ,x_n) $ and $y=(y_1,\ldots ,y_m)$ the families of variables and   $a\in\CC.$  Introduce the $R$-matrix products with coefficients in $(\ndo\CC^N)^{\ot n} \ot (\ndo\CC^N)^{\ot m} $,  
\begin{align}
& R_{nm}^{12}(xe^{ah}/y)=
\prod_{r=1,\dots,n }^{\longrightarrow} \prod_{s=n+1,\dots,n+m }^{\longleftarrow}
R_{rs}(x_r e^{ah}/y_{s-n}),\label{rnm1}\\
& \RRR_{nm}^{21}(xye^{ah})^\prime=
\prod_{r=1,\dots,n }^{\longrightarrow} \prod_{s=n+1,\dots,n+m }^{\longrightarrow}
\left(R_{sr}(x_r y_{s-n}e^{ah})^{-1}\right)^{t_r}.\label{rnm2}
\end{align}
Note that the superscript $1$ (resp. $2$) denotes the tensor factors $1,\ldots ,n$ (resp. $n+1,\ldots ,n+m$).
Next, define the formal powers series with coefficients in  $(\ndo\CC^N)^{\ot n} \ot \yhg,$
\beq\label{bnovi_09}
B_{[n]} (x) =\prod_{i=1,\ldots ,n }^{\longrightarrow}
\left(B_i (x_i)\left( R_{i+1\ts i}(x_i x_{i+1})^{-1}\right)^{t_i}\cdots
\left( R_{n  i}(x_i x_{n})^{-1}\right)^{t_i}\right).
\eeq
The next proposition can be proved by a direct computation which relies on the reflection equation \eqref{re}.
\begin{pro}
For any $m,n\geqslant 1$, we have
$$
R_{nm}^{12}(x/y)\ts B_{[n]}^{  13}(x)\ts \RRR_{nm}^{21}(xy)^\prime \ts  B_{[m]}^{  23}(y)
=
 B_{[m]}^{  23}(y)\ts \RRR_{nm}^{21}(xy)^\prime\ts B_{[n]}^{  13}(x)\ts R_{nm}^{12}(x/y),
$$
where the superscripts $1,2,3$ indicate the tensor factors as follows:
$$ 
\smalloverbrace{(\ndo\CC^N)^{\ot n}}^{1}
\ot
\smalloverbrace{(\ndo\CC^N)^{\ot m}}^{2}
\ot
\smalloverbrace{\yhg}^{3}.
$$
\end{pro}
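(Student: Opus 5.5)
The plan is a double induction: first on $n$ with $m=1$, then on $m$ with $n$ fixed. Throughout we use three tools. The first is the reflection equation \eqref{re}. The second is the Yang--Baxter equation \eqref{trig_ybe_12}. The third is a family of ``mixed'' Yang--Baxter identities for the transposed inverses $\bigl(R_{sr}(\cdot)^{-1}\bigr)^{t_r}$.

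The mixed identities are obtained from \eqref{trig_ybe_12} in the standard way. First rewrite it as $R_{12}(x/y)^{-1} R_{13}(\cdot)^{-1}\cdots$, i.e. invert or rearrange so that the relevant factors appear as inverses. Then apply the partial transposition $t_r$ in the appropriate tensor factor. Since transposition reverses the order of the factors carrying index $r$, we get relations of the form
\[
R_{12}(x_1/x_2)\,\bigl(R_{31}(x_1y)^{-1}\bigr)^{t_1}\bigl(R_{32}(x_2y)^{-1}\bigr)^{t_2}
=\bigl(R_{32}(x_2y)^{-1}\bigr)^{t_2}\bigl(R_{31}(x_1y)^{-1}\bigr)^{t_1}R_{12}(x_1/x_2).
\]
Similar relations hold with $R_{21}$, and with two transposed factors sharing one index. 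We also use the relation $R(z)^{t_1t_2}=R_{21}(z)$, which lets us convert a doubly transposed pair back to an ordinary $R$-matrix. These are exactly the identities used in \cite{JZ} and \cite{MRS} for Sklyanin-type products, and I would state them as a preliminary lemma.

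For $m=1$ the argument goes by induction on $n$. Write $B_{[n]}(x)=B_1(x_1)\,\mathcal{R}\,B_{[2,\dots,n]}(x_2,\dots,x_n)$, where $\mathcal{R}=\prod_{j>1}(R_{j1}(x_1x_j)^{-1})^{t_1}$, in the reordering that \eqref{bnovi_09} allows. Split $R_{n1}^{12}$ and $\RRR^{21}_{n1}{}^\prime$ into the factor with index $1$ and the factors with indices $2,\dots,n$. Next move the factors with indices $2,\dots,n$ through $B_1(x_1)$; this is free, since they act in different spaces. Move them through $\mathcal{R}$ using the mixed Yang--Baxter identities. Then apply the induction hypothesis to $B_{[2,\dots,n]}$ and $B_{n+1}(y_1)$. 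Finally bring the factors with index $1$ together and apply \eqref{re} to $B_1(x_1)$ and $B_{n+1}(y_1)$, using the mixed identities again to move the crossed terms $\bigl(R_{j1}\bigr)^{t_1}$ past $\bigl(R_{n+1,j}\bigr)^{t_j}$.

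The case of general $m$ is symmetric. Decompose $B_{[m]}(y)$ in the same way, and apply the $(n,1)$ case together with the induction hypothesis on $m$.

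The main obstacle is bookkeeping. One has to verify that, after each exchange, the products \eqref{rnm1} and \eqref{rnm2} reassemble in exactly their prescribed orders. The arrows matter here: $R^{12}$ uses a reversed inner product, while $\RRR^{21}{}^\prime$ uses increasing order in both indices. One also has to check that every exchange needed is one of the mixed identities with matching spectral parameters: the combination $x_r/x_j$ multiplied against $x_ry_s$ and $x_jy_s$, and $x_rx_j$ against $x_r/y_s$ and $x_jy_s$. I would first check the case $n=m=2$ fully by hand to fix the orders, then run the induction.
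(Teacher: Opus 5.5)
Your proposal is correct and is essentially the argument the paper intends. The paper only says that the identity follows by a direct computation from the reflection equation \eqref{re}, and your double induction ($m=1$ by induction on $n$, then induction on $m$ via the $(n,1)$ case), driven by \eqref{re}, the Yang--Baxter equation \eqref{trig_ybe_12} and its partially transposed variants (some of which need $R(z)^{t_1t_2}=R_{21}(z)$), is exactly such a computation. Make explicit that the exchanges are three-term relations: in the $m=1$ step you must first pull $\bigl(R_{n+1\,1}(x_1y_1)^{-1}\bigr)^{t_1}$ left past $B_{[2,\dots,n]}$ so that $R_{j\,n+1}(x_j/y_1)\bigl(R_{j1}(x_1x_j)^{-1}\bigr)^{t_1}\bigl(R_{n+1\,1}(x_1y_1)^{-1}\bigr)^{t_1}$ can be reversed, while the identity you display enters only in the induction on $m$.
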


Let $c\in\CC$.
In addition to the  extended twisted $h$-Yangian, we   introduce a certain algebra $\dyho $.
It is defined as 
an $h$-adically completed associative algebra over the ring $\CC[[h]]$ generated by the elements $b_{ij}^{ + (r)} $ and $b_{ij}^{ * (s)} ,$  where $i,j=1,\ldots ,N ,$   $r=0,1,\ldots $ and $s\in\ZZ .$ Its defining relations are given in terms of the matrices
\begin{align*}
B^+(z)=\sum_{i,j=1}^N e_{ij} \ot b_{ij}^+ (z),\quad&\text{where}\quad
b_{ij}^+ (z)=\delta_{ij}- h\sum_{r=0}^\infty b_{ij}^{+ (r)} z^{   r},\\
B^*(z)=\sum_{i,j=1}^N e_{ij} \ot b_{ij}^* (z),\quad&\text{where}\quad
b_{ij}^* (z)=\delta_{ij}+ h\sum_{r=-\infty}^\infty b_{ij}^{* (r)} z^{   r},
\end{align*}
as three reflection equations,
\begin{align}
&R(x/y)\ts B_1^+(x)\left(R_{21}(xy)^{-1}\right)^{t_1}  B_2^+(y)
=
B_2^+(y)\left(R_{21}(xy)^{-1}\right)^{t_1}B_1^+(x)\ts R(x/y),\label{re1}\\
&\R(x/y)\ts B_1^*(x)\ts \R(1/xy)^{t_1}  B_2^*(y)
=
B_2^*(y)\ts \R(1/xy)^{t_1} B_1^*(x)\ts \R(x/y),\label{re2}\\ 
&R_{21}(ye^{ - hc }/x)^{-1}  B_1^*(x) \left(R_{21}(xye^{-hc })^{-1}\right)^{t_1}  B_2^+(y)\non\\
&\qquad\qquad=
B_2^+(y)\left(R_{21}(xye^{ hc })^{-1}\right)^{t_1} B_1^*(x)R_{21}(ye^{  hc}/x)^{-1} .\label{re3} 
\end{align}

\begin{rem}
In view of the $R$-matrix realization of the quantum affine algebra, it might seem peculiar that  the series $b_{ij}^*(z)$ possess infinitely many positive powers of the variable $z$. However, this is a common phenomenon, which occurs for the annihilation operators in the case of the trigonometric $R$-matrix \cite[Sect. 2.1.2]{EK}, which is exactly the role the  series $b_{ij}^*(z)$ take in the construction given by Theorem \ref{thm_33} below.
\end{rem}

\begin{rem}
By comparing the defining relations   \eqref{re}   and \eqref{re1}, we see that the  
assignment $B(z)\mapsto B^+(z)$ defines an algebra homomorphism $\yhg\to\dyho.$
\end{rem}

The next proposition is verified by arguing as in the proof of \cite[Prop. 4.20]{JZ}.

\begin{thm}\label{pro_24}
For any $c\in\CC$ the assignments
$$
B^+(z)\mapsto L^+(z)\ts L^-(e^{-hc/2}/z)^t\Fand
B^*(z)\mapsto L^-(ze^{-hc })\ts L^+(e^{ hc/2 }/z)^t
$$
define an algebra homomorphism
\beq\label{homo}
\dyhoz \to \uh.
\eeq
\end{thm}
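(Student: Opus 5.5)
We have to check that the images $\mathcal{B}^+(z)=L^+(z)\,L^-(e^{-hc/2}/z)^t$ and $\mathcal{B}^*(z)=L^-(ze^{-hc})\,L^+(e^{hc/2}/z)^t$ satisfy the three relations \eqref{re1}, \eqref{re2} and \eqref{re3}, with $\dyhoz$ on the source side and $\uh$ at level $c$ on the target side. The approach is the same one used for Proposition \ref{pro_21}, following \cite[Prop. 4.20]{JZ}.

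In each case we substitute the images into the left-hand side and move the $L$-factors past each other one at a time until the right-hand side appears. The only tools are:
\begin{itemize}
\item the $RLL$ relations \eqref{dere2l_2} and \eqref{dere2l_3};
\item their partially transposed versions;
\item the Yang--Baxter equation \eqref{trig_ybe_12};
\item the symmetry $R(z)^{t_1t_2}=R_{21}(z)$;
\item the crossing symmetry \eqref{csym}.
\end{itemize}

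The first preparatory step is to derive the transposed relations. Applying $t_2$ to \eqref{dere2l_2}, and using that $L^\pm$ has entries in a noncommutative algebra, we cannot simply transpose products. Instead we invert and use crossing symmetry. This gives identities of the form
$$L_1^\pm(x)\,\big(R_{12}(x y)^{-1}\big)^{t_2}\, L_2^\pm(1/y)^t=L_2^\pm(1/y)^t\,\big(R_{12}(xy)^{-1}\big)^{t_2}L^\pm_1(x),$$
together with the analogous identity with $t_1t_2$ applied. We derive the same family from \eqref{dere2l_3}, where the level shifts $e^{\pm hc/2}$ appear in the arguments. Keeping careful track of these arguments, including the factor $e^{Nh}$ coming from \eqref{csym}, is bookkeeping but must be done once and for all.

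With these identities, relation \eqref{re1} follows exactly as in Proposition \ref{pro_21}(1). The argument shift $z\mapsto ze^{hc/2}$ there is replaced by the shift $e^{-hc/2}$ inside $L^-$, and the inner $R$-matrix $(R_{21}(xy)^{-1})^{t_1}$ is produced by commuting $L_1^-(\cdot)^t$ past $L_2^+(y)$ using the transposed mixed relation. For \eqref{re2} the roles of $L^+$ and $L^-$ are interchanged. Two points need attention here:
\begin{itemize}
\item The argument of $L^+(e^{hc/2}/z)^t$ lies in $z^{-1}$, so all products must be expanded in the appropriate direction. This is why \eqref{re2} is stated with the unnormalized $\R$, a polynomial in the spectral parameter.
\item The scalar factors $f(z)/(1-ze^{-h})$ from the $RLL$ relations must cancel between the two sides. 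This happens because the same ratio of arguments occurs on both sides, after rewriting $f(x/y)$ against $f(1/xy)$ via the transposed relations.
\end{itemize}

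The main obstacle is \eqref{re3}, the mixed relation between $\mathcal{B}^*(x)$ and $\mathcal{B}^+(y)$. Here all four combinations $L^\pm$ versus $L^\pm$ occur, and each crossing of $L^+$ with $L^-$ introduces a shift $e^{\pm hc/2}$. We must check that these shifts accumulate exactly to the arguments $ye^{\pm hc}/x$ and $xye^{\pm hc}$ appearing in \eqref{re3}.

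The plan for this relation is as follows:
\begin{enumerate}
\item Write the left-hand side as $R_{21}(ye^{-hc}/x)^{-1}\, L_1^-(xe^{-hc})\, L_1^+(e^{hc/2}/x)^t\,(R_{21}(xye^{-hc})^{-1})^{t_1}\, L_2^+(y)\, L_2^-(e^{-hc/2}/y)^t$.
\item Move the two transposed factors past the untransposed ones using the transposed mixed relations.
\item Then use \eqref{dere2l_3} in the form $R_{21}(\cdot)^{-1}L_1^-L_2^+=L_2^+L_1^-R_{21}(\cdot)^{-1}$, obtained by inverting.
\item Finally reassemble the result with the Yang--Baxter equation.
\end{enumerate}

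I expect a sign or shift mismatch to be resolved by the crossing identity \eqref{csym}. If the shifts do not match directly, I would first verify the relation on the generating matrices modulo $h^2$ to fix conventions, and then redo the argument with adjusted intermediate identities.
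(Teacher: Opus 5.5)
Your overall strategy (substitute the images and push $L$-factors through one another using \eqref{dere2l_2}, \eqref{dere2l_3} and their partially transposed forms) is the same direct verification the paper intends; it simply refers to \cite[Prop. 4.20]{JZ}. However, your plan for \eqref{re3} aims at the wrong identity.

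\textbf{The target relation.} The source algebra is $\dyhoz$, so \eqref{re3} must be checked at level $0$:
\[
R_{21}(y/x)^{-1}B_1^*(x)\left(R_{21}(xy)^{-1}\right)^{t_1}B_2^+(y)=B_2^+(y)\left(R_{21}(xy)^{-1}\right)^{t_1}B_1^*(x)\,R_{21}(y/x)^{-1}.
\]
Here there are no level shifts at all; only the target $\uh$ carries $c$. Your step (1) instead writes the left-hand side with $R_{21}(ye^{-hc}/x)^{-1}$ and $(R_{21}(xye^{-hc})^{-1})^{t_1}$, and you propose to show that the shifts accumulate to $e^{\pm hc}$. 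That is the level-$c$ relation of $\dyho$, which the paper explicitly leaves open (see the remark after the theorem).

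The actual mechanism is the opposite: the $e^{\pm hc/2}$ shifts produced by \eqref{dere2l_3} must be exactly cancelled by the shifts built into the images. Conjugating \eqref{dere2l_3} by $L_2^-(b)^{-1}$ and applying $t_1$ gives
$L_1^+(a)^{t_1}R_{12}(ae^{-hc/2}/b)^{t_1}L_2^-(b)=L_2^-(b)R_{12}(ae^{hc/2}/b)^{t_1}L_1^+(a)^{t_1}$; no crossing symmetry is needed for this. Similarly,
$R_{21}(ue^{-hc/2}/v)^{-1}L_1^-(v)L_2^+(u)=L_2^+(u)L_1^-(v)R_{21}(ue^{hc/2}/v)^{-1}$.

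Now take $B^*(z)\mapsto L^-(\beta z)L^+(\gamma/z)^t$ with the given $B^+$-image, and try to close \eqref{re3} at some level $c'$:
\begin{itemize}
\item the outer factor closes only if $\beta=e^{hc'-hc/2}$;
\item the middle factor closes only if $\gamma=e^{hc'}$;
\item moving the resulting outer $R$-matrix through $L_1^+(\gamma/x)^{t_1}L_2^-(e^{-hc/2}/y)^{t_2}$ (full transpose of \eqref{dere2l_3}) requires $\beta\gamma=e^{-hc/2}$.
\end{itemize}
Together these force $c'=0$. So the identity you set out to prove is out of reach, and the one that is actually needed is never written down.

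\textbf{The crossing-symmetry fallback and the bookkeeping.} Your fallback, that a shift mismatch will be repaired by \eqref{csym}, cannot work. Crossing symmetry only produces shifts by $e^{Nh}$ and conjugation by $D$, both independent of $c$, whereas every mismatch here is a power of $e^{hc/2}$. The bookkeeping you postpone is the entire content of the proof, and it is not a formality.

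For \eqref{re2}, the middle factor $\R(1/xy)^{t_1}$ passes through $L_1^+(a)^{t_1}\cdots L_2^-(b)$ precisely when $ae^{-hc/2}/b=1/xy$. Setting $c'=0$ in the constraints above gives $B^*(z)\mapsto L^-(ze^{-hc/2})L^+(1/z)^t$, and with this normalization all three relations close. The expression printed in the statement is this one with $z$ replaced by $ze^{-hc/2}$. For the printed version, the factor that commutes through is $\R(e^{hc}/xy)^{t_1}$ rather than $\R(1/xy)^{t_1}$.

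So your claim that \eqref{re2} follows ``with the roles of $L^\pm$ interchanged'' has to be checked, not assumed. The normalization of the $B^*$-image must be pinned down before the computation can actually be completed.
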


\begin{rem}
Note that the homomorphism \eqref{homo} annihilates all elements $b_{ij}^{*(r)}$ with $r>0 $. 
Furthermore, we remark that it is not clear how to extend Theorem \ref{pro_24}  to the case of algebra $\dyho$ with $c\neq 0$.
\end{rem}

 The   relations \eqref{re1}--\eqref{re3} can be generalized as follows. Let   $n$ and $m$  be positive integers  and  $x=(x_1,\ldots ,x_n), $   $y=(y_1,\ldots ,y_m)$ the families of variables. Introduce the $R$-matrix products with coefficients in $(\ndo\CC^N)^{\ot n} \ot (\ndo\CC^N)^{\ot m} $, 
\begin{align}
& R_{nm}^{21}(ye^{ah}/x)^\prime=
\prod_{r=1,\dots,n }^{\longrightarrow} \prod_{s=n+1,\dots,n+m }^{\longleftarrow}
R_{sr}( y_{s-n}e^{ah}/x_r)^{-1},\label{rnm3}\\
& \RRR_{nm}^{12}(e^{ah}/xy) =
\prod_{r=1,\dots,n }^{\longrightarrow} \prod_{s=n+1,\dots,n+m }^{\longrightarrow}
  R_{rs}(e^{ah}/x_r y_{s-n})^{t_r}.\label{rnm4}
\end{align}
We shall extend the notation \eqref{rnm1},  \eqref{rnm2},  \eqref{rnm3} and  \eqref{rnm4}   to the   $R$-matrix $\R(z)$ in an obvious way, e.g., we write (cf. \eqref{rnm1}  and  \eqref{rnm4})
\begin{align*}
& \R_{nm}^{12}(xe^{ah}/y)=
\prod_{r=1,\dots,n }^{\longrightarrow} \prod_{s=n+1,\dots,n+m }^{\longleftarrow}
\R_{rs}(x_r e^{ah} /y_{s-n}),\\
&\RRRR_{nm}^{12}(e^{ah}/xy) =
\prod_{r=1,\dots,n }^{\longrightarrow} \prod_{s=n+1,\dots,n+m }^{\longrightarrow}
 \R_{rs}(e^{ah}/x_r y_{s-n})^{t_r}.
\end{align*}
 Moreover, define  the formal powers series with coefficients in  $(\ndo\CC^N)^{\ot n} \ot \dyho,$
\begin{align}
& B^+_{[n]} (x) =\prod_{i=1,\ldots ,n }^{\longrightarrow}
\left(B^+_i (x_i)\left( R_{i+1\ts i}(x_i x_{i+1})^{-1}\right)^{t_i}\cdots
\left( R_{n  i}(x_i x_{n})^{-1}\right)^{t_i}\right),\label{nbplus}\\
& B^*_{[n]} (x) =\prod_{i=1,\ldots ,n }^{\longrightarrow}
\left(B^*_i (x_i)\ts \R_{ i\ts i+1}(e^{-hc}/x_i x_{i+1})^{t_i}\cdots
 \R_{  i n}(e^{-hc}/x_i x_{n}) ^{t_i}\right).\label{nbminus}
\end{align}
The following proposition can be proved by using  the reflection equations \eqref{re1}--\eqref{re3}.
\begin{pro}
For any $m,n\geqslant 1$, $x=(x_1,\ldots x_n)$ and $y=(y_1,\ldots y_m)$, we have
\begin{align}
&R_{nm}^{12}(x/y)\ts B_{[n]}^{+\ts  13}(x)\ts \RRR_{nm}^{21}(xy)^\prime \ts  B_{[m]}^{ +\ts  23}(y)
=
 B_{[m]}^{+\ts   23}(y)\ts \RRR_{nm}^{21}(xy)^\prime\ts B_{[n]}^{+\ts   13}(x)\ts R_{nm}^{12}(x/y),\label{re_gen1}\\
&\R_{nm}^{12}(x/y)\ts B_{[n]}^{*\ts  13}(x)\ts \RRRR_{nm}^{12}(1/xy) \ts  B_{[m]}^{*\ts  23}(y) =
 B_{[m]}^{*\ts   23}(y)\ts \RRRR_{nm}^{12}(1/xy)\ts B_{[n]}^{*\ts   13}(x)\ts \R_{nm}^{12}(x/y),\label{re_gen2}\\
&R_{nm}^{21}(ye^{- hc }/x)^\prime\ts B_{[n]}^{*\ts  13}(x)\ts \RRR_{nm}^{21}(xye^{-hc })^\prime  B_{[m]}^{ +\ts  23}(y)\non\\
&\qquad\qquad=
 B_{[m]}^{+\ts   23}(y)\ts \RRR_{nm}^{21}(xye^{ hc })^\prime  B_{[n]}^{*\ts   13}(x)\ts R_{nm}^{21}(ye^{ hc }/x)^\prime ,\label{re_gen3} 
\end{align}where the superscripts $1,2,3$ indicate the tensor factors as follows:
$$ 
\smalloverbrace{(\ndo\CC^N)^{\ot n}}^{1}
\ot
\smalloverbrace{(\ndo\CC^N)^{\ot m}}^{2}
\ot
\smalloverbrace{\dyho}^{3}.
$$
\end{pro}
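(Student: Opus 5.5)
The plan is a double induction on $n$ and $m$. For $n=m=1$ the three identities \eqref{re_gen1}--\eqref{re_gen3} are exactly the defining relations \eqref{re1}--\eqref{re3}. In that case $B^{\pm}_{[1]}(x)=B^{\pm}_1(x_1)$, $R^{12}_{11}(x/y)=R_{12}(x_1/y_1)$, $\RRR^{21}_{11}(xy)'=(R_{21}(x_1y_1)^{-1})^{t_1}$, and analogously for the barred products.

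For the inductive step in $n$ (with $m=1$ first), I will write $B^{+}_{[n]}(x)$ as $B^+_1(x_1)$ times the string $(R_{21}(x_1x_2)^{-1})^{t_1}\cdots(R_{n1}(x_1x_n)^{-1})^{t_1}$, followed by $B^{+}_{[n-1]}$ in the tensor factors $2,\dots,n$ and variables $x_2,\dots,x_n$. Similarly, $R^{12}_{n1}(x/y)$ factors as $R_{1,n+1}(x_1/y)$ times $R^{12}_{n-1,1}$ on the remaining factors, and $\RRR^{21}_{n1}(xy)'$ factors as $(R_{n+1,1}(x_1y)^{-1})^{t_1}$ times the analogous product over $r=2,\dots,n$.

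I will then move $B_{n+1}^+(y)$ (and, in the general case, $B^{+}_{[m]}(y)$) from right to left through the expression. At each stage I either use the induction hypothesis for the block $2,\dots,n$, or use \eqref{re1} for the pair $(1,n+1)$. In between, I need to commute the pure $R$-matrix factors past one another. Those exchanges are instances of the Yang--Baxter equation \eqref{trig_ybe_12}, applied in partially transposed form. For example, transposing \eqref{trig_ybe_12} in the first factor gives
\[
(R_{31}(x_1x_3)^{-1})^{t_1}\, R_{23}(x_2/x_3)\, (R_{21}(x_1x_2)^{-1})^{t_1}
= (R_{21}(x_1x_2)^{-1})^{t_1}\, R_{23}(x_2/x_3)\, (R_{31}(x_1x_3)^{-1})^{t_1}.
\]
Its variants with the inverse or the mixed transposes follow by taking transposes and inverses. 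Once the $m=1$ case is established for all $n$, the symmetric argument inducts on $m$ by splitting $B^{+}_{[m]}(y)$ in the same way.

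The same scheme handles \eqref{re_gen2}. The only changes are that $R$ is replaced by the polynomial matrix $\R$, and that $B^*_{[n]}$ uses the factors $\R_{ij}(e^{-hc}/x_ix_j)^{t_i}$. The needed exchange relations come from the Yang--Baxter equation for $\R$, which holds because $R$ and $\R$ differ by a scalar factor, together with transposition of it. Identity \eqref{re_gen3} also follows the same pattern, using \eqref{re3} for the elementary swap. The mixed products involve the factors $R_{sr}(y e^{-hc}/x)^{-1}$ and $(R_{sr}(xye^{\pm hc})^{-1})^{t_r}$ together with the $\R^{t}$-factors internal to $B^*_{[n]}$. Here the scalar discrepancy between $R$ and $\R$ cancels because each such factor occurs on both sides of the identity.

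I expect the main obstacle to be the bookkeeping of the partially transposed Yang--Baxter equations in \eqref{re_gen3}. There the spectral parameters are shifted asymmetrically by $e^{\pm hc}$ on the two sides, and the orderings $\longrightarrow$ versus $\longleftarrow$ in \eqref{rnm3}, \eqref{rnm4} must match exactly what the induction produces. I would first verify the case $n=2$, $m=1$ by hand and list every required three-term identity. I would then check that each of them is obtained from \eqref{trig_ybe_12} by applying $t_1$, taking inverses, and substituting variables, and only then write out the general induction.
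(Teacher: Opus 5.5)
Your strategy is the direct computation the paper has in mind, and the paper gives no more detail than this: induct on $n$ and $m$, apply \eqref{re1}--\eqref{re3} to one pair of tensor factors at a time, and move the pure $R$-matrix factors with partially transposed Yang--Baxter equations. As written, however, the proposal has two concrete defects.

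First, the sample exchange relation you display is false. $t_1$ reverses only the factor-$1$ components of a product. So it produces an ordinary product of transposed factors only when the factors acting on space $1$ are adjacent, and here $R_{23}$ sits between them. One can check it directly in the rational degeneration ($x_i=e^{hu_i}$, $h\to 0$). There the three factors become proportional to $1+Q_{13}/(u_1+u_3)$, $1-P_{23}/(u_2-u_3)$ and $1+Q_{12}/(u_1+u_2)$, with $Q=P^{t_1}$. The two sides then differ by
\[ -\frac{2u_1+2u_3+N}{(u_1+u_3)(u_2-u_3)(u_1+u_2)}\,\bigl(P_{23}Q_{12}-Q_{12}P_{23}\bigr)\neq 0 . \]
Applying $t_1$ to \eqref{trig_ybe_12}, after inverting, gives the version with the untransposed factor at an end:
\[ R_{23}(x_2/x_3)\bigl(R_{21}(x_1x_2)^{-1}\bigr)^{t_1}\bigl(R_{31}(x_1x_3)^{-1}\bigr)^{t_1}=\bigl(R_{31}(x_1x_3)^{-1}\bigr)^{t_1}\bigl(R_{21}(x_1x_2)^{-1}\bigr)^{t_1}R_{23}(x_2/x_3). \]
This identity with $x_3=y$ is what the step $n=2$, $m=1$ of \eqref{re_gen1} needs. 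It also needs an analogous relation for $R_{13}(x_1/y)$, whose derivation additionally uses $R^{t_1t_2}=R_{21}$ and unitarity of $\wht{R}$.

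Second, and more seriously, your claim that the same scheme handles \eqref{re_gen2} and \eqref{re_gen3} fails once you check spectral parameters with $B^*_{[n]}$ as in \eqref{nbminus}.

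For \eqref{re_gen2} with $n=2$, $m=1$, you must move $\R_{23}(x_2/y)$ through $\R_{12}(e^{-hc}/x_1x_2)^{t_1}\,\R_{13}(1/x_1y)^{t_1}$ to reach $B^*_2(x_2)$. The only available relation is
\[ \R_{13}(z_1z_2)^{t_1}\,\R_{12}(z_1)^{t_1}\,\R_{23}(z_2)=\R_{23}(z_2)\,\R_{12}(z_1)^{t_1}\,\R_{13}(z_1z_2)^{t_1}. \]
With $z_2=x_2/y$ and $z_1z_2=1/x_1y$ this forces $z_1=1/x_1x_2$, not $e^{-hc}/x_1x_2$.

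For \eqref{re_gen3} the same thing happens. Moving $R_{32}(ye^{-hc}/x_2)^{-1}$ past $\R_{12}(u)^{t_1}\bigl(R_{31}(x_1ye^{-hc})^{-1}\bigr)^{t_1}$ requires $u=1/x_1x_2$, because the two shifts $e^{-hc}$ cancel.

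So for $c\neq 0$ your induction does not close with \eqref{nbminus} as stated. It goes through, and then coincides with the paper's intended argument, only if the internal factors of $B^*_{[n]}$ are $\R_{ij}(1/x_ix_j)^{t_i}$. That is exactly the factor appearing between $B^*_1(x)$ and $B^*_2(y)$ in \eqref{re2}, and it agrees with \eqref{nbminus} when $c=0$. A correct proof has to notice and resolve this discrepancy rather than assume the Yang--Baxter equation covers it.
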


At the end of this section, we introduce a certain completion of the algebra $\dyho$.
Let $I_p$ with $p\geqslant 1$ be the $h$-adically completed two-sided ideal in    $\dyho $ generated by $h^p$ and all elements $b_{ij}^{*(r)}$, where $r\leqslant -p$, such that   $h^n a\in I_p$ implies  $  a\in I_p$ for all $n\geqslant 1$.
Introduce the completed algebra as the inverse limit
$$
\dyhot=\lim_{ \longleftarrow} \, \dyho / I_p.
$$

\section{Restricted \texorpdfstring{$\dyho$}{DYNhctw}-modules}\label{section_03}

Let $W$ be a  topologically free $\CC[[h]]$-module which is also a $\dyho$-module. Then $W$ is said to be   {\em restricted}, if the action $B^*(z)_W$ of the series $B^*(z)$ on $W$ belongs to
$\ndo\CC^N\ot\om(W,W((z))_h)$, where $W((z))_h$ denotes the $h$-adic completion of $W((z))$. 
In this section, we study restricted $\dyho$-modules. Our main results are Theorem \ref{thm_25}, which defines a  structure of restricted $\dyho$-module on $\yhg$, and Theorem \ref{thm_33}, which demonstrates that restricted $\dyho$-modules are naturally equipped with a structure of $\phi$-coordinated quasi $\Vc$-module.
 
In the next theorem, we use the indices $0,1,2$ to denote the tensor factors as follows:
\beq\label{faktori_09}
\smalloverbrace{\ndo\CC^N}^{0}
\ot
\smalloverbrace{(\ndo\CC^N)^{\ot m}}^{1}
\ot
\smalloverbrace{\yhg}^{2}.
\eeq
Moreover, we denote by ``$\cdotrl$''   
the standard multiplication in $(\ndo\CC^N)^{\text{op}} \ot(\ndo\CC^N)^{\ot m}$
and by
$\left(\RRR_{1m}^{10}(xye^{-hc })^\prime\right)^\sim$   the inverse of
$ \RRR_{1m}^{10}(xye^{-hc })^\prime $ with respect to this multiplication, which is uniquely determined by the property
$\left(\RRR_{1m}^{10}(xye^{-hc })^\prime\right)^\sim   \cdotrl  \RRR_{1m}^{10}(xye^{-hc })^\prime  =1$.  

\begin{thm}\label{thm_25}
For any $c\in\CC$ there exists a unique structure of restricted $\dyho$-module on $\yhg$ such that for any $m$ and the variables $x$ and  $y=(y_1,\ldots ,y_m)$ we have
\begin{align}
&B_0^+(x)\ts  B_{[m]}^{  12}(y)   =  B_0 (x)  B_{[m]}^{  12}(y) ,\label{actionbplus}\\
&  B_0^{* }(x)\ts   B_{[m]}^{  12}(y) =
\left(\RRR_{1m}^{10}(xye^{-hc })^\prime\right)^\sim \cdotrl\Big(
 \left(R_{1m}^{10}(ye^{- hc }/x)^\prime\right)^{-1} B_{[m]}^{   12}(y)\Big.\non\\ 
&\qquad \qquad\qquad\qquad\qquad\qquad\qquad\hspace{39pt}\Big.\RRR_{1m}^{10}(xye^{ hc })^\prime  \ts R_{1m}^{10}(ye^{ hc }/x)^\prime \Big).\label{actionbstar}
\end{align}
\end{thm}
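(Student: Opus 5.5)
We will construct the action explicitly, check that it is well defined, and then verify the defining relations \eqref{re1}--\eqref{re3} of $\dyho$ on the spanning vectors.

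\textbf{Step 1: the spanning set and uniqueness.} Since $B_{[m]}(y)$ is $B_1(y_1)\cdots B_m(y_m)$ multiplied by $R$-matrix factors that are invertible power series in $h$, the coefficients of the matrix entries of $B^{12}_{[m]}(y)\vac$, over all $m\geqslant 0$ (with $m=0$ giving the unit $\vac$), span a $h$-adically dense submodule of $\yhg$. Hence \eqref{actionbplus} and \eqref{actionbstar} determine the actions of $B^+(x)$ and $B^*(x)$, and uniqueness is immediate.

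\textbf{Step 2: existence and well-definedness of the operators.} For $B^+(x)$ we simply let it act by left multiplication by $B(x)$. By the Remark following \eqref{re1}, relation \eqref{re1} then holds automatically.

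For $B^*(x)$ the difficulty is that the right-hand side of \eqref{actionbstar} is written in terms of the generating matrices $B^{12}_{[m]}(y)$, which satisfy relations. We must show that it descends to an operator on $\yhg$. The plan is to mimic Etingof--Kazhdan's Lemma \ref{lemma21}:

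1. Define the operator on the tensor algebra generated by the $b_{ij}^{(r)}$ by the same formula.
2. Show that it preserves the two-sided ideal generated by the reflection relations \eqref{re}.
3. For the ideal check, use the generalized reflection equation for $B_{[n]}$ (the Proposition after \eqref{bnovi_09}). Conjugating $B_{[m]}(y)$ by the $R$-matrix products in \eqref{actionbstar} is compatible with inserting \eqref{re} between adjacent factors. This follows from the Yang--Baxter equation \eqref{trig_ybe_12} and from its transposed and inverted variants, which are obtained with the crossing symmetry \eqref{csym}, in order to move $R_{0s}$-type factors past $R_{s s'}(y_s/y_{s'})$ and $(R_{s's}(y_sy_{s'})^{-1})^{t_s}$.
4. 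The ``$\cdotrl$''-inverse is needed precisely because the transposed factor in $\RRR^{10}_{1m}(xye^{-hc})'$ is transposed in space $0$. Its existence follows from \eqref{csym}, as in the discussion of $R(z)^\sim$.

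\textbf{Step 3: the restricted property.} Take the expansions so that $R(ye^{\pm hc}/x)$ and the $xy$-dependent factors are expanded in $x^{-1}$ modulo any $h^p$. Then, modulo $h^p$, each vector is sent into $\yhg((x))$, which gives $B^*(x)_W\in \ndo\CC^N\otimes\om(W,W((x))_h)$.

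\textbf{Step 4: relations \eqref{re2} and \eqref{re3}.} Relation \eqref{re2} is checked by applying two consecutive $B^*$ actions to $B_{[m]}(y)$. The resulting $R$-matrix conjugations are then reordered by the Yang--Baxter equation, the unitarity identity for $\wht R$, and the relation between $R$ and $\R$ (the scalar factors $f$ and $1-ze^{-h}$ cancel on both sides). Relation \eqref{re3} compares $B^*_1(x)B^+_2(y')$ with $B^+_2(y')B^*_1(x)$. Here $B^+_2(y')B_{[m]}(y)$ is regarded as a $B_{[m+1]}$-type product with variables $(y',y)$, after absorbing the transposed factors via \eqref{bnovi_09}. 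The action of $B^*$ then produces exactly the extra factors $R_{21}(y'e^{\pm hc}/x)^{-1}$ and $(R_{21}(xy'e^{\pm hc})^{-1})^{t_1}$, which appear in \eqref{re3}.

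I expect Step 2, the well-definedness of $B^*(x)$, to be the main obstacle. It requires a careful intertwining argument with the transposed factors and the $\cdotrl$-inverse. I would first try to reduce it to the $m=2$ case of the relation \eqref{re} together with a braid-type identity for $\RRR'$ and $R'$ products derived from \eqref{trig_ybe_12} and \eqref{csym}.
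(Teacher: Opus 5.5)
Your plan is essentially the paper's proof. It gets uniqueness from $R_{[m]}(y)^\sim\cdotrl B_{[m]}(y)=B_1(y_1)\cdots B_m(y_m)$ and restrictedness from the form of the $R$-matrices. It proves well-definedness of $B^*(x)$ by inserting \eqref{re} at positions $k,k+1$ of $B_1(y_1)\cdots B_m(y_m)$ and pushing $R_{k\,k+1}(y_k/y_{k+1})$ through all the $R$-matrix factors with the Yang--Baxter equation, and it checks \eqref{re1}--\eqref{re3} by applying both sides to $B_{[m]}(y)$. Crossing symmetry is not needed for that Yang--Baxter step: the transposed and inverted variants come directly from \eqref{trig_ybe_12}, and the $\cdotrl$-inverse of $\RRR_{1m}^{10}(xye^{-hc})^\prime$ is just the reversed product of the $R_{s0}(xy_se^{-hc})^{t_0}$.

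There are two small corrections. First, the coefficients of the $B_{[m]}(y)$ are scalars modulo $h$, so they are not $h$-adically dense. Uniqueness holds instead because $h^m$ times any length-$m$ monomial in the $b_{ij}^{(r)}$ is a combination of these coefficients, and the target is $h$-torsion-free. Second, in Step 3 the $xy$-factors are power series in $xy_s$, not expansions in $x^{-1}$. Negative powers of $x$ come only from the $y_s/x$-factors, always paired with powers of $y_s$.
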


\begin{prf}
Let $y=(y_1,\ldots ,y_m)$ and
\beq\label{tmpf004}
R_{[m]} (y)=\prod_{i=1,\ldots ,m-1 }^{\longrightarrow}
\left( \left( R_{i+1\ts i}(y_i y_{i+1})^{-1}\right)^{t_i}\cdots
\left( R_{m i}(y_i y_{m})^{-1}\right)^{t_i}\right).
\eeq
By \eqref{bnovi_09}, 
we have
\beq\label{bnovi09}
 B_{[m]}(y)=R_{[m]} (y)\cdotrl \left(B_1(y_1)\ldots  B_m(y_m)\right),
\eeq
where the symbol ``$\cdotrl$'' means that each factor 
$$
  \left(R_{i+1\ts i}(y_i y_{i+1})^{-1}\right)^{t_i}\cdots
\left( R_{m i}(y_i y_{m})^{-1}\right)^{t_i}
$$
of $R_{[m]} (y)$
acts on $ B_1(y_1)\ldots  B_m(y_m) $ by  the standard multiplication in $(\ndo\CC^N)^{\text{op}}\ot(\ndo\CC^{N})^{\ot(n-i)}$.
Clearly, \eqref{bnovi09} can be written equivalently as
\beq\label{tmpf002}
 R_{[m]} (y)^\sim \cdotrl B_{[m]}(y)=  B_1(y_1)\ldots  B_m(y_m),
\eeq
where
\begin{align}
R_{[m]} (y)^\sim=&\prod_{i=1,\ldots ,m-1 }^{\longleftarrow}
 R_{m i}^{t_i}  \cdots
 R_{i+1\ts i}^{t_i}\qquad\text{for}\qquad R_{ji}^{t_i}=R_{ji}(y_i y_j)^{t_i}.\label{tmpf08}
\end{align}
Therefore,  if there exists a structure of 
$\dyho$-module on $\yhg$, such that the  \eqref{actionbplus} and \eqref{actionbstar}  hold, it is determined uniquely by these formulas.
Moreover, such a module needs to be restricted, due to the form of the $R$-matrix \eqref{erofzed}. Indeed, it suffices to observe that for any $n,a_1,\ldots ,a_m\geqslant 1$, the right-hand side of \eqref{actionbstar} possesses finitely many negative powers of $x$ modulo $h^n y_1^{a_1}\ldots y_m^{a_m}$.

To prove the theorem, it remains to show that \eqref{actionbplus} and \eqref{actionbstar} define $\CC[[h]]$-module maps $B^+(x)$ and $B^*(x)$ on $\yhg$ and, furthermore, that these maps satisfy the reflection equations \eqref{re1}--\eqref{re3}, which define the algebra $\dyho$. Regarding the latter, this is checked by applying the left and right-hand sides of \eqref{re1}--\eqref{re3} on  $B_{[m]}(y)$ and then using  \eqref{actionbplus} and \eqref{actionbstar} to show that   both sides coincide. As for the former, one needs to show that the ideal of  defining relations \eqref{re} for $\yhg$ lies in the kernel  of the maps \eqref{actionbplus} and \eqref{actionbstar}. Due to the same form of \eqref{re} and  \eqref{re1}, this follows easily for the first operator $B^+(x)$.  As for the second operator $B^*(x)$, the proof is more complicated so we present it in detail.

To declutter the notation, we  write
\beq\label{tmpf003}
B_{(m)} (y) = B_1(y_1)\ldots B_m(y_m) .
\eeq
Instead of \eqref{actionbstar}, we shall use the equivalent formula 
\beq\label{tmpf_001}
  B_0^{* }(x)\ts   B_{(m)}^1(y) =
	R_{[m]}^1 (y)^\sim \cdotrl\left(
 X \cdotrl\left(
 Y \ts B_{(m)}^1(y)\ts Z  \ts W \right)\right),
\eeq
where the terms $R_{[m]}^1 (y)^\sim= R_{[m]} (y)^\sim$  and $B_{(m)}^1(y)=B_{(m)} (y)$, defined by \eqref{tmpf004} and \eqref{tmpf003}, act  on the tensor factors denoted by the superscript $1$ in \eqref{faktori_09}. Also, we use the notation
\begin{align}
&X=\left(\RRR_{1m}^{10}(xye^{-hc })^\prime\right)^\sim, &&Y=\left(R_{1m}^{10}(ye^{- hc }/x)^\prime\right)^{-1},\label{xandy}\\
&Z=\RRR_{1m}^{10}(xye^{ hc })^\prime ,&& W=R_{1m}^{10}(ye^{ hc }/x)^\prime ,\label{zandw}
\end{align}
which employs the superscripts as in \eqref{faktori_09}.
Note that the equivalence of expressions \eqref{actionbstar} and \eqref{tmpf_001} is a direct consequence of
\eqref{tmpf002}.

Suppose that $m\geqslant 2$ and let $k=1,\ldots ,m-1$. As before, we consider the family of variables $y=(y_1,\ldots ,y_m)$. Denote by $y^\prime$ the family obtained by swapping $y_k$ and $y_{k+1}$,  
$$
 y^\prime=(y_1,\ldots ,y_{k-1},y_{k+1},y_k,y_{k+2},\ldots ,y_m) .
$$
Next, let 
\beq\label{permutationoperator}
P=\sum_{i,j=1}^N e_{ij}\ot e_{ji}\in\ndo\CC^N \ot\ndo\CC^N
\eeq
be the permutation operator.
Consider the defining relation \eqref{re}. Let
\beq\label{tmpf20}
U=R_{k\ts k+1}(y_k/y_{k+1})\Fand
V=\left(R_{k+1\ts k}(y_k y_{k+1})^{-1}\right)^{t_k}.
\eeq
It is sufficient to prove that the images of
\beq\label{tmpf005}
U
\left(
V\cdotrl
B_{(m)}^1(y)
\right)
\Fand 
\left(V
\cdotlr
\left(P_{k\ts k+1}\ts
B_{(m)}^1(y^\prime)\ts
P_{k\ts k+1}\right) 
\right)
U
\eeq
under \eqref{tmpf_001} coincide. Regarding the notation in \eqref{tmpf005}, note that the symbol ``$\cdotrl$'' (resp. ``$\cdotlr$'')
for the action of $V=\left(R_{k+1\ts k}(y_k y_{k+1})^{-1}\right)^{t_k}$ indicates that its $k$-th (resp. $(k+1)$-th) component  is applied from the right and its $(k+1)$-th (resp. $ k $-th) component from the left. In other words, the action of $V$ on the tensor factors $k$ and $k+1$ is given by the standard multiplication in $(\ndo\CC^N)^{\text{op}}\ot\ndo\CC^N$
(resp. $\ndo\CC^N \ot (\ndo\CC^N)^{\text{op}} $).

Let us   examine the image of the first expression in \eqref{tmpf005}.   By \eqref{tmpf_001},   it  equals
\beq\label{tmpf09}
U
\left(
V\cdotrl
	\left(
	R_{[m]}^1 (y)^\sim \cdotrl\left(
 X \cdotrl\left(
 Y \ts B_{(m)}^1(y)\ts Z  \ts W \right)\right)
\right)
\right).
\eeq
Consider the $R$-matrix product $R_{[m]}^1 (y)^\sim$, which is given by \eqref{tmpf08}. Multiplying it by $V=\left(R_{k+1\ts k}(y_k y_{k+1})^{-1}\right)^{t_k}$,  as in \eqref{tmpf09}, cancels its factor 
$R_{k+1\ts k}^{t_k}$.
Next, multiplying the resulting expression by 
$U=R_{k\ts k+1}(y_k/y_{k+1})$, as in \eqref{tmpf09}, swaps its adjacent factors $R_{k+1 i}^{t_i}$ and $R_{k  i}^{t_i}$  with $i=1,\ldots ,k-1$. Indeed, this follows from the Yang--Baxter equation \eqref{trig_ybe_12}.
Therefore, the expression in \eqref{tmpf09} coincides with
\beq\label{tmpf10}
	R_{[m]}^1 (y)^\circ \cdotrl\left(
 U
\left(X \cdotrl\left(
 Y \ts B_{(m)}^1(y)\ts Z  \ts W \right)\right)
\right),
\eeq
where $R_{[m]}^1 (y)^\circ $ is obtained from $R_{[m]}^1 (y)^\sim $
by removing its factor $R_{k+1\ts k}^{t_k}$ and then swapping the adjacent factors $R_{k+1 i}^{t_i}$ and $R_{k  i}^{t_i}$  for $i=1,\ldots ,k-1$. In other words, $R_{[m]}^1 (y)^\circ $   equals
\beq\label{tmpf14}
\left(\prod_{i=k+1,\ldots ,m }^{\longleftarrow}
 R_{m i}^{t_i}  \ldots 
 R_{i+1\ts i}^{t_i}\right)
 R_{m k}^{t_k}  \ldots 
 R_{k+2\ts k}^{t_k} 
\left(\prod_{i=1,\ldots ,k-1 }^{\longleftarrow}
 R_{m i}^{t_i}  \ldots R_{k+2 i}^{t_i} \ts 
R_{k  i}^{t_i} \ts R_{k+1 i}^{t_i} \ts
R_{k-1 i}^{t_i} \cdots
 R_{i+1\ts i}^{t_i}\right),
\eeq
where  $R_{ji}^{t_i}=R_{ji}(y_i y_j)^{t_i}$.

Consider the terms $X$ and $Y$, given by \eqref{xandy}. We have
\begin{align*}
& X=\prod_{s=1,\ldots ,m  }^{\longleftarrow}
R_{s0}(xy_s e^{-hc })^{t_0}=R_{m0}(xy_m e^{-hc })^{t_0}\ldots R_{10}(xy_1 e^{-hc })^{t_0},\\
&Y=\prod_{s=1,\ldots ,m  }^{\longrightarrow}
R_{s0}( y_s e^{- hc }/x) 
=R_{10}( y_1 e^{- hc }/x) \ldots R_{m0}( y_m e^{- hc }/x) 
.
\end{align*}
Denote by $X^\circ$ (resp.  $Y^\circ$) the $R$-matrix product obtained from $X$ (resp. $Y$) by swapping its adjacent factors 
$R_{k+1\ts 0}(xy_{k+1} e^{-hc })^{t_0}$ and $R_{k0}(xy_k e^{-hc })^{t_0}$
(resp. $R_{k0}( y_k e^{- hc }/x)$ and $R_{k+1\ts 0}( y_{k+1} e^{- hc }/x)$).
 By the Yang--Baxter equation \eqref{trig_ybe_12}, we have
$$
U\ts X =X^\circ \ts  U\Fand
U\ts Y =Y^\circ \ts  U.
$$
Thus, we conclude that \eqref{tmpf10} is equal to
\beq\label{tmpf11}
	R_{[m]}^1 (y)^\circ \cdotrl 
\left(X^\circ \cdotrl\left(
 Y^\circ \ts U\ts B_{(m)}^1(y)\ts Z  \ts W \right) 
\right).
\eeq

Next, by the reflection equation \eqref{re}, we see that
$$
U\ts B_{(m)}^1(y)=\left(P_{k\ts k+1}\ts
B_{(m)}^1(y^\prime)\ts
P_{k\ts k+1}\right) U,
$$
which implies that \eqref{tmpf11} is equal to
\beq\label{tmpf12}
	R_{[m]}^1 (y)^\circ \cdotrl 
\left(X^\circ \cdotrl\left(
 Y^\circ   \left(P_{k\ts k+1}\ts
B_{(m)}^1(y^\prime)\ts
P_{k\ts k+1}\right)  U\ts Z  \ts W \right) 
\right).
\eeq

Consider the terms $Z$ and $W$, given by \eqref{zandw}.
We have
\begin{align*}
& Z=\prod_{s=1,\ldots ,m  }^{\longrightarrow}
\left(R_{s0}(xy_s e^{ hc })^{-1}\right)^{t_0}=
\left(R_{10}(xy_1 e^{ hc })^{-1}\right)^{t_0}\ldots 
\left(R_{m0}(xy_m e^{ hc })^{-1}\right)^{t_0},\\
&W=\prod_{s=1,\ldots ,m  }^{\longleftarrow}
R_{s0}( y_s e^{ hc }/x)^{-1} 
=R_{m0}( y_m e^{ hc }/x)^{-1}  \ldots
 R_{10}( y_1 e^{ hc }/x)^{-1}  
.
\end{align*}
Denote by $Z^\circ$ (resp.  $W^\circ$) the  product obtained from $Z$ (resp. $W$) by swapping its adjacent factors 
$\left(R_{k0}(xy_k e^{ hc })^{-1}\right)^{t_0}$ and $\left(R_{k+1\ts 0}(xy_{k+1} e^{ hc })^{-1}\right)^{t_0}$
(resp. $R_{k+1\ts 0}( y_{k+1} e^{ hc }/x)^{-1}$ and $R_{k0}( y_k e^{ hc }/x)^{-1}$).
 By the Yang--Baxter equation \eqref{trig_ybe_12}, we have
$$
U\ts Z =Z^\circ \ts U\Fand
U\ts W =W^\circ \ts  U.
$$
Therefore, we conclude that \eqref{tmpf12} coincides with
\beq\label{tmpf13}
	R_{[m]}^1 (y)^\circ \cdotrl 
\left(X^\circ \cdotrl\left(
 Y^\circ   \left(P_{k\ts k+1}\ts
B_{(m)}^1(y^\prime)\ts
P_{k\ts k+1}\right)    Z^\circ  \ts W^\circ \ts U\right) 
\right).
\eeq

The $R$-matrix product  $R_{[m]}^1 (y)^\circ $, given by \eqref{tmpf14}, can be written as
\begin{align*}
&\left(\prod_{i=k+2,\ldots ,m }^{\longleftarrow}
 R_{m i}^{t_i}  \ldots 
 R_{i+1\ts i}^{t_i}\right)
 \left(R_{m \ts k+1}^{t_{k+1}}  \ldots 
 R_{k+2\ts k+1}^{t_{k+1}} \right)
\left(R_{m k}^{t_k}  \ldots 
 R_{k+2\ts k}^{t_k} \right)
\\
&\times
\left(\prod_{i=1,\ldots ,k-1 }^{\longleftarrow}
 R_{m i}^{t_i}  \ldots R_{k+2 i}^{t_i} \ts 
R_{k  i}^{t_i} \ts R_{k+1 i}^{t_i} \ts
R_{k-1 i}^{t_i} \cdots
 R_{i+1\ts i}^{t_i}\right),
\end{align*}
By the Yang--Baxter equation \eqref{trig_ybe_12}, we have
$$
U\left(R_{m \ts k+1}^{t_{k+1}}  \ldots 
 R_{k+2\ts k+1}^{t_{k+1}} \right)
\left(R_{m k}^{t_k}  \ldots 
 R_{k+2\ts k}^{t_k} \right) =
\left(R_{m k}^{t_k}  \ldots 
 R_{k+2\ts k}^{t_k} \right)
\left(R_{m \ts k+1}^{t_{k+1}}  \ldots 
 R_{k+2\ts k+1}^{t_{k+1}} \right)
 U.
$$
This implies that \eqref{tmpf13} equals
\beq\label{tmpf16}
	\left(R_{[m]}^1 (y)^\bullet \cdotrl 
\left(X^\circ \cdotrl\left(
 Y^\circ   \left(P_{k\ts k+1}\ts
B_{(m)}^1(y^\prime)\ts
P_{k\ts k+1}\right)    Z^\circ  \ts W^\circ  \right) 
\right)\right) U,
\eeq
where
\begin{align*}
R_{[m]}^1 (y)^\bullet=\,
&\left(\prod_{i=k+2,\ldots ,m }^{\longleftarrow}
 R_{m i}^{t_i}  \ldots 
 R_{i+1\ts i}^{t_i}\right)
\left(R_{m k}^{t_k}  \ldots 
 R_{k+2\ts k}^{t_k} \right)
\left(R_{m \ts k+1}^{t_{k+1}}  \ldots 
 R_{k+2\ts k+1}^{t_{k+1}} \right)
\\
&\times
\left(\prod_{i=1,\ldots ,k-1 }^{\longleftarrow}
 R_{m i}^{t_i}  \ldots R_{k+2 i}^{t_i} \ts 
R_{k  i}^{t_i} \ts R_{k+1 i}^{t_i} \ts
R_{k-1 i}^{t_i} \cdots
 R_{i+1\ts i}^{t_i}\right).
\end{align*}

Finally, let
\beq\label{tmpf29}
T=
X^\circ \cdotrl\left(
 Y^\circ   \left(P_{k\ts k+1}\ts
B_{(m)}^1(y^\prime)\ts
P_{k\ts k+1}\right)    Z^\circ  \ts W^\circ\right),
\eeq
so that \eqref{tmpf16} can be written more briefly as 
\beq\label{tmpf17}
	\left(R_{[m]}^1 (y)^\bullet \cdotrl 
T\right) U.
\eeq
Denote by $R_{[m]}^1 (y)^\oplus $ the expression obtained from $R_{[m]}^1 (y)^\bullet$ by inserting the additional $R$-matrix 
$R_{k \ts k+1}^{t_{k+1}}
=R_{k \ts k+1}(y_k y_{k+1})^{t_{k+1}}
=R_{k+1 \ts k }(y_{k+1} y_{k })^{t_{k }}
$
right after 
$R_{k+2\ts k+1}^{t_{k+1}}$, i.e, 
\begin{align}
R_{[m]}^1 (y)^\oplus=\,
&\left(\prod_{i=k+2,\ldots ,m }^{\longleftarrow}
 R_{m i}^{t_i}  \ldots 
 R_{i+1\ts i}^{t_i}\right)
\left(R_{m k}^{t_k}  \ldots 
 R_{k+2\ts k}^{t_k} \right)
\left(R_{m \ts k+1}^{t_{k+1}}  \ldots 
 R_{k+2\ts k+1}^{t_{k+1}} \ts R_{k \ts k+1}^{t_{k+1}}\right)\non
\\
&\times
\left(\prod_{i=1,\ldots ,k-1 }^{\longleftarrow}
 R_{m i}^{t_i}  \ldots R_{k+2 i}^{t_i} \ts 
R_{k  i}^{t_i} \ts R_{k+1 i}^{t_i} \ts
R_{k-1 i}^{t_i} \cdots
 R_{i+1\ts i}^{t_i}\right).\label{tmpf30}
\end{align}
Clearly, this additional $R$-matrix satisfies 
$
V\cdotlr R_{k \ts k+1}^{t_{k+1}}= 1,
$
where $V$ is given by \eqref{tmpf20}.
Hence, \eqref{tmpf17} can be also written as
\beq\label{tmpf32}
\left(V\cdotlr \left(R_{[m]}^1 (y)^\oplus \cdotrl 
T\right)\right) U .
\eeq
Indeed, the symbol ``$\cdotrl$'' for the action of $R_{[m]}^1 (y)^\oplus$ indicates that, in particular, the $(k+1)$-th (resp. $k$-th) component of $R_{k \ts k+1}^{t_{k+1}}$ is applied from the right (resp. from the left) and the symbol ``$\cdotlr$''
for the action of $V$ that its $k$-th (resp. $(k+1)$-th) component  is applied from the left (resp. from the right), so that $V$ and $R_{k \ts k+1}^{t_{k+1}}$   cancel.
By examining the explicit expressions \eqref{tmpf29} and \eqref{tmpf30} for $T$ and $R_{[m]}^1 (y)^\oplus$, we conclude that \eqref{tmpf32} coincides with the image of the second term in \eqref{tmpf005} under \eqref{tmpf_001}, as required.
\end{prf}

Suppose $W$ is a restricted $\dyho$-module.
 Then the expression
$$
\bc(z)_W  = B^+(z)_W  B^*(ze^{hc} )_W^{-1}
$$
is a well-defined  element of $\ndo\CC^N\ot\om(W,W((z))_h)$.
The operator series $\bc(z)_W $ satisfies the identity which closely resembles the {\em quantum current commutation relation} of Reshetikhin and Semenov-Tian-Shansky \cite{RS},
\beq\label{qccr}
\bc_1(x)_W\ts R_{21}(ye^{-2hc}/x)\ts \bc_2(y)_W\ts R_{21}(y/x)^{-1}
\equiv
R(x/y)^{-1}\bc_2(y)_W\ts R(xe^{-2hc}/y)\ts \bc_1(x)_W.
\eeq
The   symbol ``$\equiv$'' indicates that for any positive integer $n$ there exists  $p(x,y)\in\CC[[x,y]]$ satisfying $p(ye^x,y)\neq 0$ such that
both sides of \eqref{qccr},  when multiplied by $p(x,y)$,  coincide modulo $h^n$.
Our next goal is to generalize this property of $\bc(z)_W$. For any $n\geqslant 1$ and the family of variables $x=(x_1,\ldots ,x_n),$ define
\beq\label{beceenovi}
\bc_{[n]} (x)_W = B^+_{[n]}(x)_W  B^*_{[n]}(xe^{ hc} )_W^{-1}, \quad\text{where}\quad xe^{ hc}=(x_1 e^{ hc},\ldots ,x_n e^{ hc}).
\eeq
As before, we assume that $W$ is a restricted $\dyho$-module. This ensures that $\bc_{[n]} (x)_W$ is a well-defined   element of
$(\ndo\CC^N)^{\ot n}\ot\om(W,W((x_1,\ldots ,x_n))_h),$ where $W((x_1,\ldots ,x_n))_h$ is the $h$-adic completion of $W((x_1,\ldots ,x_n)) $.
Next, for any $m\geqslant 1,$  the family of variables $y=(y_1,\ldots ,y_m)$  and $a\in\CC$ define
\beq\label{rnm1opp}
R_{nm}^{21}(ye^{ah}/x)=
\prod_{r=1,\dots,n }^{\longleftarrow} \prod_{s=n+1,\dots,n+m }^{\longrightarrow}
R_{sr}( y_{s-n} e^{ah}/x_r).
\eeq
Note that, in comparison with \eqref{rnm1},    the factors in \eqref{rnm1opp}   are given  in the opposite order.

\begin{pro}\label{prop_aux}
Let $W$ be a restricted $\dyho$-module. For any integers $m,n\geqslant 1$ and    $x=(x_1,\ldots x_n)$ and $y=(y_1,\ldots y_m)$, we have
\begin{align}
&\bc_{[n+m]}(x_1,\ldots x_n,y_1,\ldots y_m)_W\label{qaargen}\\
\equiv\, &   \bc_{[n]}^{13}(x)_W\ts R_{nm}^{21}(ye^{-2hc}/x)\ts \bc_{[m]}^{23}(y)_W\ts R_{nm}^{21}(y/x)^{-1}\label{qbbrgen}\\
\equiv\,&  
R_{nm}^{12}(x/y)^{-1}\bc_{[m]}^{23}(y)_W\ts R_{nm}^{12}(xe^{-2hc}/y)\ts \bc_{[n]}^{13}(x)_W,\non
\end{align}
where the indices $ 1,2,3$ denote the tensor factors as follows:
$$
\smalloverbrace{(\ndo\CC^N)^{\ot n}}^{1}
\ot
\smalloverbrace{(\ndo\CC^N)^{\ot m}}^{2}
\ot
\smalloverbrace{\ndo W}^{3}.
$$
The   symbol ``$\equiv$'' indicates that for any positive integer $k$ there exist  $p_{i,j}(x_i,y_j)\in\CC[[x_i, y_j]]$, $i=1,\ldots ,n$, $j=1,\ldots ,m$, satisfying
$p_{i,j}(y_j e^{x_i},y_j)\neq 0$,
 such that
the corresponding expression,  when multiplied by 
$$
p(x_1,\ldots ,x_n,y_1,\ldots ,y_m)\coloneqq\prod_{i=1,\ldots,n}\prod_{j=1,\ldots,m}p_{i,j}(x_i,y_j)\in\CC[[x_1,\ldots ,x_n,y_1,\ldots , y_m]],
$$
  coincide modulo $h^k$. 
\end{pro}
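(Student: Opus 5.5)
Our plan is to reduce everything to the block reflection equations \eqref{re_gen1}--\eqref{re_gen3} and to a factorization of $B^\pm_{[n+m]}$ into $n$- and $m$-blocks. First, directly from the definitions \eqref{nbplus} and \eqref{nbminus}, we split the ordered products over $i=1,\ldots,n+m$ into the parts $i\leqslant n$ and $i>n$:
\begin{align*}
B^+_{[n+m]}(x,y)&=B^{+\ts 13}_{[n]}(x)\ts \RRR_{nm}^{21}(xy)^\prime\ts B^{+\ts 23}_{[m]}(y),\\
B^*_{[n+m]}(x,y)&=B^{*\ts 13}_{[n]}(x)\ts \RRRR_{nm}^{12}(e^{-hc}/xy)\ts B^{*\ts 23}_{[m]}(y).
\end{align*}
The only point to check here is that the cross $R$-matrices with $r\leqslant n<s$ can be moved past $B^{+}_{s}(y_{s-n})$, $B^{*}_{s}(y_{s-n})$ and past the $R$-matrices internal to the second block. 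This holds because they act on the different tensor factors $r$ and $s$, and the transposition is taken only in the factor $r$. Consequently, they commute with everything that lives in factors $n+1,\ldots,n+m$ and not in factor $r$, up to the ordering in factor $s$, which is preserved by the arrow conventions. With $x$ replaced by $xe^{hc}$ and $y$ replaced by $ye^{hc}$, the middle factor of the second identity becomes $\RRRR_{nm}^{12}(e^{-3hc}/xy)$.

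Next we compute $\bc_{[n+m]}=B^+_{[n+m]}\ts (B^*_{[n+m]})^{-1}$, which gives
$$
B^{+13}_{[n]}(x)\ts \RRR^{21}_{nm}(xy)'\ts B^{+23}_{[m]}(y)\ts B^{*23}_{[m]}(ye^{hc})^{-1}\ts \RRRR^{12}_{nm}(e^{-3hc}/xy)^{-1}\ts B^{*13}_{[n]}(xe^{hc})^{-1}.
$$
To reach \eqref{qbbrgen}, we must move $B^{*13}_{[n]}(xe^{hc})^{-1}$ to the left past $B^{+23}_{[m]}(y)$ and the adjacent $R$-matrices. We use the mixed relation \eqref{re_gen3} with $x$ replaced by $xe^{hc}$, solved for $B^{+23}_{[m]}(y)\,\cdot\,B^{*13}_{[n]}(xe^{hc})^{-1}$. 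The relation must be rewritten by inverting $B^*$ on both sides. The transposed factors $\RRR'$ then have to be inverted in the sense of ``$\cdotrl$'' via the crossing symmetry \eqref{csym}, which converts $(R^{-1})^{t}$-type factors into $R^{t}$-type factors with shifted arguments $e^{\pm Nh}$ and conjugation by $D$. In the same way, we commute $B^{*23}_{[m]}(ye^{hc})^{-1}$ with $B^{*13}_{[n]}(xe^{hc})^{-1}$ using \eqref{re_gen2}. After collecting terms, the transposed cross factors should telescope: the $\RRR'(xy)$ from $B^+$, the $\RRRR(e^{-3hc}/xy)$ from $B^*$ and the ones from \eqref{re_gen3} cancel. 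The two non-transposed factors $R^{21}_{nm}(ye^{-2hc}/x)$ and $R^{21}_{nm}(y/x)^{-1}$ survive; they come from $R^{21}_{nm}(ye^{-hc}/x')^\prime$ and $R^{21}_{nm}(ye^{hc}/x')^\prime$ with $x'=xe^{hc}$. The second form is obtained symmetrically, or from the first by applying \eqref{re_gen1} and \eqref{re_gen2} together with unitarity \eqref{unitarity} to convert $R^{21}$ factors into inverses of $R^{12}$ factors.

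The main obstacle is analytic rather than algebraic. The products $\bc^{13}_{[n]}(x)\,R\,\bc^{23}_{[m]}(y)$ involve $B^*$ series with infinitely many positive and negative powers, so individual rearrangements are not a priori defined in $W((x,y))_h$. Also, $\R$ and $R$ differ by the scalar $f(z)/(1-ze^{-h})$, which must be tracked when \eqref{re_gen2} (written with $\R$) is used. This is why we only claim ``$\equiv$''. Modulo $h^k$, each $R$-matrix factor $R_{sr}(y_je^{a h}/x_i)$ is a rational function whose denominators are polynomials in $x_i,y_j$ not vanishing at $x_i=y_je^{\,\cdot}$. We therefore choose $p_{i,j}(x_i,y_j)$ as the product of these denominators, together with those of $f$ truncated mod $h^k$, for each pair $(i,j)$. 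Multiplying by $p$ makes every relevant term polynomial in the cross variables. Restrictedness of $W$ ensures that each side, applied to $w\in W$, lies in $W((x,y))_h$ mod $h^k$, so all the manipulations above become legitimate identities of well-defined series. The condition $p_{i,j}(y_je^{x_i},y_j)\neq 0$ holds because all denominators are of the form $1-y_je^{bh}/x_i$ or $x_i-y_je^{bh}$ up to units, and these do not vanish identically after the substitution $x_i\mapsto y_je^{x_i}$.
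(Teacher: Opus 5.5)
The paper states this proposition without proof, so I am judging your argument on its own. Your block factorizations of $B^+_{[n+m]}$ and $B^*_{[n+m]}$ are correct. The cross factors only have to pass the later factors of the \emph{first} block, never the second, but the result is right, and the overall plan is the natural one.

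The genuine gap is the step where you say the transposed cross factors ``telescope''. They do not cancel. Cancel $B^{+13}_{[n]}(x)$ and multiply on the left by $B^{*13}_{[n]}(xe^{hc})$. Then \eqref{re_gen3} with $x\mapsto xe^{hc}$ gives, using $R^{21}_{nm}(\cdot)^\prime=R^{21}_{nm}(\cdot)^{-1}$,
\begin{align*}
&B^{*13}_{[n]}(xe^{hc})\,\RRR^{21}_{nm}(xy)^\prime\,B^{+23}_{[m]}(y)\\
&\qquad=R^{21}_{nm}(ye^{-2hc}/x)\,B^{+23}_{[m]}(y)\,\RRR^{21}_{nm}(xye^{2hc})^\prime\,B^{*13}_{[n]}(xe^{hc})\,R^{21}_{nm}(y/x)^{-1}.
\end{align*}
This produces the two $R$-matrix factors of \eqref{qbbrgen}, but it leaves the transposed factor $\RRR^{21}_{nm}(xye^{2hc})^\prime$ in place. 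What remains to be shown is
\begin{align*}
&B^{*23}_{[m]}(ye^{hc})\,\RRR^{21}_{nm}(xye^{2hc})^\prime\,B^{*13}_{[n]}(xe^{hc})\,R^{21}_{nm}(y/x)^{-1}\\
&\qquad\equiv R^{21}_{nm}(y/x)^{-1}\,B^{*13}_{[n]}(xe^{hc})\,\RRRR^{12}_{nm}(e^{-3hc}/xy)\,B^{*23}_{[m]}(ye^{hc}).
\end{align*}
This can only come from \eqref{re_gen2} at $(xe^{hc},ye^{hc})$. That relation has the middle factor $\RRRR^{12}_{nm}(e^{-2hc}/xy)$ on \emph{both} sides, and its outer factors agree with $R^{21}_{nm}(y/x)^{-1}$ up to a scalar.
\begin{itemize}
\item By unitarity of $\wht{R}$, the factor on the left equals $\RRRR^{12}_{nm}(e^{-2hc}/xy)$ times a nontrivial scalar built from $f$ and $1-e^{-h}/z$.
\item The factor on the right, coming from \eqref{nbminus}, has a different spectral parameter and no such scalar.
\end{itemize}
Crossing symmetry plays no role here. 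Inverting with respect to ``$\cdotrl$'' would only introduce $D$'s and $e^{\pm Nh}$ shifts that are absent from the target.

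These one-sided discrepancies cannot be absorbed by ``$\equiv$'', contrary to your remark about the $\R$ versus $R$ normalization. The factor $p$ multiplies both sides; it only clears the denominators of the cross $R$-matrices, so that differently expanded products can be compared modulo $h^k$. Concretely, $B^\pm(z)$, $R(z)$ and $\R(z)$ reduce modulo $h$ to $1$, $1$ and $(1-z)\cdot 1$. So with \eqref{nbminus} read literally, $\bc_{[n+m]}(x,y)$ and \eqref{qbbrgen} differ modulo $h$ by the scalar $\prod_{r,s}(1-1/x_ry_s)^{-1}$, which no common $p$ can remove.

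To close the argument, you must take the internal factors of $B^*_{[n]}$ unshifted and normalized exactly as in $B^+_{[n]}$. Unshifted factors are also what deriving \eqref{re_gen2} from \eqref{re2} requires. Then $B^*_{[n+m]}(x',y')=B^{*13}_{[n]}(x')\,\RRR^{21}_{nm}(x'y')^\prime\,B^{*23}_{[m]}(y')$, and the remaining identity is \eqref{re_gen2} with the same scalars on both sides. The first equivalence then costs exactly one use each of \eqref{re_gen3} and \eqref{re_gen2}.

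For the second equivalence, unitarity alone cannot reorder $\bc^{13}_{[n]}$ and $\bc^{23}_{[m]}$. Instead, apply the first equivalence with the two blocks interchanged. Combine it with the exchange relation $R^{12}_{nm}(x/y)\,\bc_{[n+m]}(x,y)=\Pi\,\bc_{[m+n]}(y,x)\,\Pi\,R^{12}_{nm}(x/y)$, where $\Pi$ swaps the two blocks of tensor factors; this relation follows from \eqref{re_gen1} and \eqref{re_gen2}.
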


\begin{rem}\label{rem_AB}
Later on, we shall use the following simple observation. The   relation between \eqref{qaargen} and  \eqref{qbbrgen} in Proposition \ref{prop_aux} can be  also written as
	$$
	X\cdotrl\left(\bc_{[n+m]}(x_1,\ldots x_n,y_1,\ldots y_m)_W\ts Z^{-1}\right)
	\equiv  \bc_{[n]}^{13}(x)_W\ts   \bc_{[m]}^{23}(y)_W,
	$$
	where
	$Z=R_{nm}^{21}(y/x)^{-1}$
	and $X$ is chosen so that we have
	$$
	X\cdotrl R_{nm}^{21}(ye^{-2hc}/x)=1,
	$$
	where  ``$\cdotrl$'' denotes the standard multiplication in $((\ndo\CC^N)^{\text{op}})^{\ot n}\ot(\ndo\CC^N)^{\ot m}$.

\end{rem}

Our next goal is to establish a connection between restricted $\dyho$-modules and the Etingof--Kazhdan quantum affine vertex algebra $\Vc$ \cite[Thm. 2.3]{EK} associated with the  $R$-matrix   \eqref{erofu}. To carry this out, we shall use 
 the notion of $\phi$-coordinated quasi  module. For the reader convenience, we recall its definition, which is  
  an $h$-adic counterpart of \cite[Def. 3.4]{Liphi} for the associate $\phi=\phi(z_2,z_0)=z_2 e^{z_0}$ . 

\begin{defn}\label{def_ver12}
Let $(V,Y,\vac,\Sc)$ be a quantum vertex algebra.
A {\em $\phi$-coordinated quasi $V$-module} is a pair $(W,Y_W)$ such that $W$ is a topologically free $\CC[[h]]$-module and $ Y_W(\cdot, z) $ is  a $\mathbb{C}[[h]]$-module map
\begin{align*}
Y_W(\cdot, z) \colon V &\to \om(W,W((z))_h)\\
v &\mapsto Y_W(v,z)   =\sum_{r\in\mathbb{Z}} v_r   \ts z^{-r-1} 
\end{align*}
which satisfies  
 $ Y_W(\vac,z)w=w$ for all $w\in W$
and the {\em  quasi weak associativity}: for any $u,v\in V$ and $k>0$ there exists $p(z_1,z_2)\in\CC[[z_1,z_2]]$ satisfying $p(z_2 e^{z_0},z_2  )\neq 0$ such that
\begin{align}
&p(z_1,z_2) \ts Y_W(u,z_1)Y_W(v,z_2)\in\om (W,W((z_1,z_2)) )\mod h^k\Fand\label{qwassoc1}\\
&\big(p(z_1,z_2) \ts Y_W(u,z_1)Y_W(v,z_2)\big)\big|_{z_1= z_2e^{z_0}}^{\modd h^k}  \big. \label{qwassoc2qwassoc2} \\
&\qquad- p(z_2 e^{z_0},z_2  )\ts Y_W\left(Y(u,z_0)v,z_2\right)\ts
\in\ts  h^k \om(W,W[[z_0^{\pm 1},z_2^{\pm 1}]]).  \label{qwassoc2}
\end{align}
\end{defn}

 Regarding Definition \ref{def_ver12}, note that the constraint imposed by
\eqref{qwassoc1} requires that the given expression belongs to $\om (W,W((z_1,z_2)) )$ when regarded modulo $h^k$. Also, the symbol ``mod $h^k$'' in the superscript in \eqref{qwassoc2qwassoc2} means that the corresponding substitution $z_1=z_2e^{z_0}$ should be applied to the given expression regarded modulo $h^k$ (while \eqref{qwassoc1} ensures that such a substitution exists).
 
\begin{thm}\label{thm_33}
Let $W$ be a restricted $\dyho$-module.
There exists a unique structure of $\phi$-coordinated quasi $\Vc$-module on $W $ so that the module map $Y_{W}(\cdot , z)$ satisfies
\beq\label{modulemap}
Y_{W}(T_{[n]}^+(u_1,\ldots ,u_n)\vac , z)=
\bc_{[n]}(x_1, \ldots ,x_n )_W\big|_{x_1=ze^{u_1},\ldots ,x_n=ze^{u_n}}.
\eeq
\end{thm}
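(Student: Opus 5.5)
The plan is to follow the standard pattern for constructing ($\phi$-coordinated) modules from generators and relations: first show that \eqref{modulemap} gives a well-defined $\CC[[h]]$-module map $Y_W(\cdot,z)$ on $\Vc=\textrm{U}(R)$, and then verify quasi weak associativity using Proposition \ref{prop_aux}. Uniqueness is immediate. By the Poincar\'e--Birkhoff--Witt type property of $\textrm{U}(R)$, the coefficients of $T^+_{[n]}(u)\vac$, $n\geqslant 0$, span a dense $\CC[[h]]$-submodule of $\Vc$. Hence $Y_W$ is determined by \eqref{modulemap}, and in particular $Y_W(\vac,z)=1$.

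\textbf{Step 1: well-definedness.} The ideal of relations of $\textrm{U}(R)$ is generated by \eqref{rtt}. So we must check that the right-hand side of \eqref{modulemap} respects
\[
R_{k\,k+1}(e^{u_k-u_{k+1}})\,T^+_{[n]}(u)\vac=P_{k\,k+1}T^+_{[n]}(u')P_{k\,k+1}\,R_{k\,k+1}(e^{u_k-u_{k+1}}),
\]
where $u'$ swaps $u_k$ and $u_{k+1}$. After the substitution $x_i=ze^{u_i}$, we have $x_k/x_{k+1}=e^{u_k-u_{k+1}}$. It therefore suffices to show that
\[
R_{k\,k+1}(x_k/x_{k+1})\,\bc_{[n]}(x)_W=P_{k\,k+1}\,\bc_{[n]}(x')_W\,P_{k\,k+1}\,R_{k\,k+1}(x_k/x_{k+1}).
\]
For $B^+_{[n]}$ this is the case $n=m=1$ of \eqref{re_gen1} embedded in the chain \eqref{nbplus}. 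We combine it with the Yang--Baxter equation \eqref{trig_ybe_12}, exactly as in the proof of Theorem \ref{thm_25}. For $B^*_{[n]}(xe^{hc})^{-1}$ we argue the same way using \eqref{re_gen2} and \eqref{nbminus}. The transposed factors $\R(e^{-hc}/x_ix_j)^{t_i}$ must be matched with the factors $(R_{ji}(x_ix_j)^{-1})^{t_i}$ in $B^+_{[n]}$ after the shift $x\mapsto xe^{hc}$, which produces $e^{-2hc}\cdot e^{hc}\ldots$. I expect this bookkeeping to require the crossing symmetry \eqref{csym} and the unitarity-type relation between $R$ and $\R$ (the scalar factor $f$ cancels on both sides). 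We also need that the substitution $x_i=ze^{u_i}$ is legitimate in $\om(W,W((z))_h)[[u]]$ modulo $h^k$. This follows because $\bc_{[n]}(x)_W$ lies in $\om(W,W((x_1,\ldots,x_n))_h)$ by restrictedness.

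\textbf{Step 2: quasi weak associativity.} Take $u=T^+_{[n]}(u)\vac$ and $v=T^+_{[m]}(v)\vac$. By \eqref{EK_vo} and Lemma \ref{lemma21},
\[
Y(T^+_{[n]}(u)\vac,z_0)\,T^+_{[m]}(v)\vac
\]
equals $T^+_{[n]}(u|z_0)$ multiplied by $T^+_{[m]}(v)\vac$, conjugated by $R$-matrix products of the form $R^{12}_{nm}(e^{z_0+u-v+hc})^{-1}$ on the left and $R^{12}_{nm}(e^{z_0+u-v-hc})$ on the right. Applying $Y_W(\cdot,z_2)$ and using Step 1 gives
\[
\bc_{[n+m]}(z_2e^{z_0+u},z_2e^{v})_W
\]
sandwiched by $R$-matrices with arguments $x/y=e^{z_0+u-v}$ shifted by $e^{\pm hc}$. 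With $\Vc$ at level $2c$, the shifts become $e^{\pm 2hc}$ in the relevant places. On the other side, $Y_W(u,z_1)Y_W(v,z_2)$ equals $\bc^{13}_{[n]}(x)_W\,\bc^{23}_{[m]}(y)_W$ with $x=z_1e^{u}$ and $y=z_2e^{v}$. Remark \ref{rem_AB} (equivalently Proposition \ref{prop_aux}) says this product is $\equiv$ to $X\cdotrl(\bc_{[n+m]}(x,y)_W Z^{-1})$, with $X$ inverse to $R^{21}_{nm}(ye^{-2hc}/x)$. Multiplying by the polynomial $p=\prod p_{i,j}$ from Proposition \ref{prop_aux} gives \eqref{qwassoc1}. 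Substituting $z_1=z_2e^{z_0}$ then matches the vertex-algebra side, after rewriting the ``$\cdotrl$''-inverse via crossing symmetry, which converts the $R^{21}$ products into the $R^{12}$ conjugation from Lemma \ref{lemma21}. This holds modulo $h^k$, which is \eqref{qwassoc2}. Density and $h$-adic continuity then extend everything to all $u,v\in\Vc$.

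\textbf{Main obstacle.} I expect the hardest part to be the precise matching in Step 2. The $\cdotrl$-inverse $X$ from Remark \ref{rem_AB} must be identified with the factor $R^{-1}(e^{u-v+z_0+hc'})$ coming from $T^-$, including the shift by $hc'/2$ in \eqref{EK_vo} with $c'=2c$. The ordering of the products \eqref{rnm1} versus \eqref{rnm1opp} must also be reconciled, via \eqref{csym} and unitarity \eqref{unitarity}. My first attempt would be to check the case $n=m=1$ against \eqref{qccr}. The general case then follows by the same tensor bookkeeping.
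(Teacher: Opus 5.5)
Your proposal is correct and follows essentially the same route as the paper. Well-definedness is checked on the relation \eqref{rtt}, or equivalently on its $\R$-form \eqref{rbartt} since the scalar factor cancels, using \eqref{re1}, \eqref{re2} and the Yang--Baxter equation. Quasi weak associativity then comes from Proposition \ref{prop_aux} and Remark \ref{rem_AB}, compared against \eqref{EK_vo} and Lemma \ref{lemma21}.

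Two of your anticipated complications do not arise. In Step 1 the $B^+$ and $B^*$ halves of $\bc_{[n]}$ are handled independently, because the shift $x\mapsto xe^{hc}$ leaves $x_k/x_{k+1}$ unchanged; so no matching of transposed factors and no crossing symmetry is needed there. In Step 2 the unitarity \eqref{unitarity} alone identifies $R^{21}_{nm}(ye^{-2hc}/x)$ with $R^{12}_{nm}(xe^{2hc}/y)^{-1}$ and $R^{21}_{nm}(y/x)^{-1}$ with $R^{12}_{nm}(x/y)$.
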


\begin{prf}
Let $W$ be a restricted $\dyho$-module. First, we show that the map $Y_W(\cdot, z)$ is well-defined by \eqref{modulemap}. Note that the defining relations \eqref{rtt} for the algebra $\textrm{U}(R)$ can be written equivalently as
\beq\label{rbartt}
\R(e^{u-v})\ts T^+_{1} (u)\ts T^+_2  (v)=  T^+_2  (v)\ts T^+_{1} (u)\ts \R(e^{u-v}),
\eeq
where the $R$-matrix $\R(e^u)\in\ndo\CC^N\ot\ndo\CC^N[[u]]$ is obtained by setting $z=e^u$ in
\eqref{rbarmatrix}. 
Let $1\leqslant k<n$ be integers,  
$$u=(u_1,\ldots ,u_n)\fand u^\prime=(u_1^\prime,\ldots,u_n^\prime)=(u_1,\ldots u_{k-1},u_{k+1},u_k,u_{k+2},\ldots,u_n)$$
 the families of variables and $P_{k\ts k+1}=1^{\ot(k-1)}\ot P\ot 1^{\ot(n-k-1)}$   the action of the permutation operator
\eqref{permutationoperator}
on the tensor factors $k$ and $k+1$ of $(\ndo\CC^N)^{\ot n}$.
It suffices to verify that the ideal of defining relation \eqref{rbartt} belongs to the kernel of $Y_W(\cdot, z)$, or, in other words, that the images of
$$
\R_{k\ts k+1}(e^{u_k - u_{k+1}})\ts T_{[n]}^+(u )\vac
\Fand
P_{k\ts k+1}\ts T_{[n]}^+(u^\prime )\vac\ts P_{k\ts k+1}\ts \R_{k\ts k+1}(e^{u_k - u_{k+1}})
$$
under \eqref{modulemap} coincide. Clearly, their images are given by
$$
\R_{k\ts k+1}(e^{u_k - u_{k+1}})\ts
\bc_{[n]}(x_1, \ldots ,x_n )_W\big|_{x_1=ze^{u_1},\ldots ,x_n=ze^{u_n}}
$$
and
$$
P_{k\ts k+1}\ts
\bc_{[n]}(x_1, \ldots ,x_n )_W\big|_{x_1=ze^{u^\prime_1},\ldots ,x_n=ze^{u^\prime_n}}
\ts P_{k\ts k+1}\ts \R_{k\ts k+1}(e^{u_k - u_{k+1}}) 
$$
respectively. The fact that they coincide can be proved by an argument which follows the proof of \cite[Lemma 3.6]{KS} and relies on the   Yang--Baxter equation and the reflection equations \eqref{re1} and \eqref{re2}.

We already observed   that  \eqref{beceenovi} belongs to
$(\ndo\CC^N)^{\ot n}\ot\om(W,W((x_1,\ldots ,x_n))_h) $
if $W$ is a restricted $\dyho$-module. This implies that the image of the map
$Y_W(\cdot, z)$ belongs to $\om(W,W((z))_h)$. Therefore, to prove the theorem, it remains to check that  the map
$Y_W(\cdot, z)$ possesses the quasi weak associativity properties 
\eqref{qwassoc1}--\eqref{qwassoc2}.

For any $n $ and $m$ and the variables $u=(u_1,\ldots, u_n)$, $v=(v_1,\ldots ,v_m)$ consider the power series
\beq\label{tmnt}
T_{[n]}^{+13}(u_1,\ldots, u_n)\ts T_{[m]}^{+24}(v_1,\ldots ,v_m)(\vac\ot\vac),
\eeq
where the superscripts $ 1,2,3,4$ denote the tensor factors as follows:
\beq\label{tmnt2}
\smalloverbrace{(\ndo\CC^N)^{\ot n}}^{1}
\ot
\smalloverbrace{(\ndo\CC^N)^{\ot m}}^{2}
\ot
\smalloverbrace{\Vc}^{3}
\ot
\smalloverbrace{\Vc}^{4} 
\eeq
and     $\vac\ot\vac$ belongs to the tensor factors $n+m+1$ and $n+m+2$ of \eqref{tmnt2}. Clearly, it suffices to show that \eqref{qwassoc1}--\eqref{qwassoc2} hold when the corresponding expressions are applied on the coefficients of  \eqref{tmnt}.  By \eqref{modulemap}, the application of $Y_W(\cdot ,z_1)Y_W(\cdot, z_2)$ on \eqref{tmnt} yields
\begin{align}
&\bc_{[n]}^{13}(x_1, \ldots ,x_n )_W\big|_{x_1=z_1 e^{u_1},\ldots ,x_n=z_1 e^{u_n}} \ts
\bc_{[m]}^{23}(y_1, \ldots ,y_m )_W\big|_{y_1=z_2 e^{v_1},\ldots ,y_m=z_2 e^{v_m}}\non\\
=\,&\bc_{[n]}^{13}(x_1, \ldots ,x_n )_W  \ts
\bc_{[m]}^{23}(y_1, \ldots ,y_m )_W\big|_{x_1=z_1 e^{u_1},\ldots ,x_n=z_1 e^{u_n}, y_1=z_2 e^{v_1},\ldots ,y_m=z_2 e^{v_m}}.\label{tmnt3}
\end{align}
Since
$$
\bc_{[n+m]}(x_1,\ldots x_n,y_1,\ldots y_m)_W\in(\ndo\CC^N)^{\ot (n+m)}
\ot\om(W,W((x_1,\ldots x_n,y_1,\ldots y_m))_h),
$$
the first assertion \eqref{qwassoc1} of the quasi weak associativity
follows from the relation between \eqref{qaargen} and \eqref{qbbrgen},  as given by Proposition \ref{prop_aux}.

Let us apply $Y_W(Y(\cdot, z_0), z_2)$ to \eqref{tmnt}. By using \eqref{EK_vo} and \eqref{modulemap}, we obtain
\beq\label{tmnt4}
X\cdotrl \left(\bc_{[n+m]}(x_1,\ldots,x_n,y_1,\ldots ,y_m)_W|_{x_1=z_2 e^{z_0+u_1},\ldots ,x_n=z_2 e^{z_0+u_n}, y_1=z_2 e^{v_1},\ldots ,y_m=z_2 e^{v_m}}Z^{-1}\right),
\eeq
where
$$
Z=R_{nm}^{12}(e^{z_0 +u-v})=
\prod_{r=1,\dots,n }^{\longrightarrow} \prod_{s=n+1,\dots,n+m }^{\longleftarrow}
R_{rs}(e^{z_0+u_r - v_{s-n}})
$$
and $X$ is chosen so that we have
$$
X\cdotrl R_{nm}^{12}(e^{z_0 +u-v+2hc})^{-1} =1,
$$
	where  ``$\cdotrl$'' denotes the standard multiplication in $((\ndo\CC^N)^{\text{op}})^{\ot n}\ot(\ndo\CC^N)^{\ot m}$.
The second assertion  of the quasi weak associativity can be now deduced by comparing the expressions in \eqref{tmnt3} and \eqref{tmnt4}, which correspond to the   terms  \eqref{qwassoc2qwassoc2} and \eqref{qwassoc2} respectively,  and by using   Remark \ref{rem_AB} and the unitarity property \eqref{unitarity}.
\end{prf}

\section{Invariants of the extended twisted \texorpdfstring{$h$}{h}-Yangian}\label{section_04}

By Theorem \ref{thm_25}, $\yhg$ is equipped with the structure of restricted $\dyho$-module. Hence, by applying Theorem \ref{thm_33} to  $W=\yhg$, we obtain a structure of $\phi$-coordinated quasi $\Vc$-module on $ \yhg$. We shall denote this $\phi$-coordinated quasi  module by $\Mcc$ and the corresponding module map by $Y_{\Mcc}(\cdot, z)$. In addition, for $c=-N/2$ we 
denote $\Mcc$ (resp. $Y_{\Mcc }(\cdot, z)$)
by $\Mccrit$ (resp. $Y_{\Mccrit}(\cdot, z)$), and the algebra $\dyho$    by $\dyhocrit$. 
Define the {\em submodule of invariants} of $\Mcc$ by
$$
\mathfrak{z}(\Mcc)=\left\{a\in\Mcc\,:\, B^*(z)_{\Mcc} a=a\right\}.
$$
Note that we have
$$
Y_{\Mcc}(v,z)a\in\Mcc[[z]]\quad\text{for all }v\in\Vc\text{ and } a\in\mathfrak{z}(\Mcc).
$$

The goal of this section is to investigate the submodule of invariants  $\mathfrak{z}(\Mccrit)$, its relation with the center of $\dyhocrit$ and its applications. Our main tool will be the fusion procedure   \cite{C,IMO,N} for the $R$-matrix
$$
\Rv(z)=\frac{e^{h/2}-e^{-h/2}z}{\left(1-z\right)\left(1-ze^{-h}\right)}   \R(z) ,
$$
where $\R(z)$ is defined by \eqref{rbarmatrix}. 
First, we recall the fusion procedure by following  the exposition in \cite[Sections 2, 3]{JLM0}.
Suppose  $\Lambda$ is a standard tableau of shape $\lambda\vdash n$. Denote by  $c_k(\Lambda)$ the content $j-i$ of the box $(i,j)$ of $\lambda$ occupied by $k$ in $\Lambda.$
Introduce the order on the set of all pairs $(i,j)$, where 
$1\leqslant i <j\leqslant n,$ as follows:
\beq\label{order}
(i,j )\prec (i^\prime, j^\prime)
\qquad\text{if}\qquad
j<j^\prime
\quad\text{or}\quad
j=j^\prime\text{ and }i<i^\prime .
\eeq
Consider the expression
$$
\Rv_\Lambda(z_1,\ldots ,z_n)
=
\prod_{(i,j) }^{\longrightarrow} 
\left(
P_{j-i\ts j-i+1}\ts \Rv_{j-i\ts j-i+1}(z_i e^{ (c_i(\Lambda)-c_j(\Lambda))h}/z_j)
\right),
$$
where the arrow over the product symbol indicates that the product over the set of all pairs $(i,j)$ with $1\leqslant i <j\leqslant n$ is ordered with respect to \eqref{order} and $P_{rs}$ is the action of the permutation operator \eqref{permutationoperator}
on the $r$-th and $s$-th tensor factor of $(\ndo\CC^N)^{\ot n}.$
Denote by $\lambda^\prime$   the conjugate partition of $\lambda,$ $c_{\lambda^\prime}$ the Schur element corresponding to $\lambda^\prime$ and $\check{R}_0$ a certain invertible operator on $(\CC^N)^{\ot n}$; see\cite{JLM0} for more details. The fusion procedure implies that  
$$
\Ec_\Lambda = \frac{1}{c_{\lambda^\prime}}\ts \Rv_\Lambda(z_1,\ldots ,z_n)\ts
\check{R}_0^{-1}\Big|_{z_1=1}\Big|_{z_2=1}\ldots \Big|_{z_n=1},
$$ 
is a well-defined operator satisfying $\Ec_\Lambda^2=\Ec_\Lambda.$

Denote by $\Vccrit$ the quantum   vertex algebra $\Vcdva$ at the critical level $c=-N$.
Consider the   power series
\beq\label{telambda}
T_\Lambda(u)=\tr_{1,\ldots ,n} \, T_{[n]}^+ (u -c_1(\Lambda)h,\ldots ,u -c_n(\Lambda)h)\vac \ts D^{\ot n}\ts \Ec_\Lambda \in \Vccrit[[u]],
\eeq
where the matrix $D$ is given by \eqref {diagonal}.
By \cite[Prop. 7.4]{BJK}, its coefficients belong to the center of $\Vccrit$. Let $T_\Lambda=T_\Lambda(0)$ be the constant term  of the series \eqref{telambda}.

Recall that, due to  Theorem \ref{thm_33}, $\Mccrit$ is a $\phi$-coordinated quasi $\Vccrit$-module. By applying the corresponding module map  $Y_{\Mccrit}(\cdot, z)$,  given by \eqref{modulemap} with $W=\Mccrit$,  to $T_\Lambda$, one obtains
\begin{align} 
 Y_{\Mccrit}(T_\Lambda , z) =  &\, \tr_{1,\ldots ,n} \, \bc_{[n]}(x_1, \ldots ,x_n )_{\Mccrit}\big|_{x_1=ze^{-c_1(\Lambda)h},\ldots ,x_n=ze^{-c_n(\Lambda)h}} \ts D^{\ot n}\ts \Ec_\Lambda\non\\
 = & \, \tr_{1,\ldots ,n} \, B^+_{[n]}(ze^{-c(\Lambda)h})_{\Mccrit} \ts B^*_{[n]}(ze^{-c(\Lambda)h -Nh/2})_{\Mccrit}^{-1}\ts D^{\ot n}\ts \Ec_\Lambda,\label{central}
\end{align}
where
\begin{align}
&ze^{-c(\Lambda)h}=
(ze^{-c_1(\Lambda)h}, \ldots ,ze^{-c_n(\Lambda)h}),\label{zeovi} \\
&ze^{-c(\Lambda)h-Nh/2}=
(ze^{-c_1(\Lambda)h-Nh/2}, \ldots ,ze^{-c_n(\Lambda)h-Nh/2}).\non
\end{align}
Next, applying $Y_{\Mccrit}(T_\Lambda , z)$ to $1\in \Mccrit$ yields
 \begin{align}\label{belambda_10}
 B_\Lambda(z)\coloneqq Y_{\Mccrit}(T_\Lambda , z)1  
  \in\Mccrit[[z^{\pm 1}]].
\end{align}
Finally, motivated by the form of \eqref{central}, we introduce the formal power series
$$
\mathcal{B}_\Lambda(z)=
\tr_{1,\ldots ,n} \, B^+_{[n]}(ze^{-c(\Lambda)h})  \ts B^*_{[n]}(ze^{-c(\Lambda)h -Nh/2})^{-1}\ts D^{\ot n}\ts \Ec_\Lambda\in \dyhocritt[[z^{\pm 1}]].
$$ 
Clearly, we have
\beq\label{blambdablambda}   
B_\Lambda(z) =\mathcal{B}_\Lambda(z)1 ,
\eeq
 where $1\in \Mccrit\equiv\yhg$ and the action of $\mathcal{B}_\Lambda(z)$  is given by Theorem \ref{thm_25}.

\begin{lem} 
Let   $x=ze^{-c(\Lambda)h}$ be the family of variables given by \eqref{zeovi}. We have
\beq\label{bneccc}
\mathcal{B}_{[n]}(x)\ts\Ec_\Lambda =\Ec_\Lambda \ts \mathcal{B}_{[n]}(x)\ts\Ec_\Lambda .
\eeq
\end{lem}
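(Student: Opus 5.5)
The plan is to prove separately that $\mathcal{B}_{[n]}(x)\ts\Ec_\Lambda$ is fixed by left multiplication by $\Ec_\Lambda$. Here $\mathcal{B}_{[n]}(x)=B^+_{[n]}(x)\ts B^*_{[n]}(xe^{hc})^{-1}$ with $c=-N/2$. This should follow from two ``fusion'' properties, one for each factor:
\begin{align*}
\text{(a)}&\quad \Ec_\Lambda\ts B^+_{[n]}(x)\ts \Ec'_\Lambda = B^+_{[n]}(x)\ts \Ec'_\Lambda ,\\
\text{(b)}&\quad \Ec'_\Lambda\ts B^*_{[n]}(xe^{hc})^{-1}\ts \Ec_\Lambda = B^*_{[n]}(xe^{hc})^{-1}\ts \Ec_\Lambda .
\end{align*}
Here $\Ec'_\Lambda$ is a suitable ``transposed-twisted'' idempotent coming from the $R$-matrix products in \eqref{nbplus} and \eqref{nbminus}. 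Combining (a) and (b) gives $\Ec_\Lambda\mathcal{B}_{[n]}(x)\Ec_\Lambda=\mathcal{B}_{[n]}(x)\Ec_\Lambda$. If the intermediate projector cannot be identified cleanly, I would instead prove the single identity $\Ec_\Lambda B^+_{[n]}(x) = B^+_{[n]}(x)\ts\widetilde{\Ec}$ and $\widetilde{\Ec}\ts B^*_{[n]}(xe^{hc})^{-1}\Ec_\Lambda = B^*_{[n]}(xe^{hc})^{-1}\Ec_\Lambda$ for the same operator $\widetilde{\Ec}$.

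The basic input is the standard mechanism used for Sklyanin-type determinants. Iterating the reflection equation \eqref{re1}, exactly as in the step of the proof of Theorem \ref{thm_25} where $U\ts B_{(m)}(y)=P_{k\ts k+1}B_{(m)}(y')P_{k\ts k+1}U$, gives the following for each adjacent transposition $s_k$:
$$\Rv_{k\ts k+1}(x_k/x_{k+1})\ts B^+_{[n]}(x)=B^+_{[n]}(s_kx)\ts \widetilde U_k .$$
Here $\widetilde U_k$ is the corresponding $R$-matrix acting from the right, obtained from the $t$-transposed factors via crossing symmetry \eqref{csym} and the Yang--Baxter equation \eqref{trig_ybe_12}. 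Multiplying along the reduced word used in $\Rv_\Lambda(z_1,\ldots,z_n)$ gives
$$\Rv_\Lambda(z)\ts B^+_{[n]}(z)=B^+_{[n]}(\sigma z)\ts\widetilde{\Rv}_\Lambda(z),$$
where $\sigma$ is the longest permutation. I would take $z_i=x_i=ze^{-c_i(\Lambda)h}$, which is precisely the parametrization of \eqref{zeovi}. After taking the consecutive limits $z_1\to1,\dots$ in the definition of $\Ec_\Lambda$, the left side becomes $c_{\lambda'}\Ec_\Lambda\check R_0 B^+_{[n]}(x)$. The right side becomes $B^+_{[n]}(x)$ times a right fused operator, whose image lies in the range cut out by $\Ec_\Lambda$ (after transposition). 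The same procedure, using \eqref{re_gen2} with the unnormalized $\R$, handles $B^*_{[n]}$. The matching of the spectral parameters for $B^*$ at $xe^{-Nh/2}$ is exactly where the critical value $c=-N/2$ and the factor $e^{-hc}$ in \eqref{nbminus} enter.

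The main obstacle is bookkeeping. The reflection equations produce right-hand $R$-matrices with arguments like $x_ix_j$, and these contain transposes. One must check that they assemble into the fused operator whose product with $\Ec_\Lambda$ behaves as an idempotent. This requires that the $x_ix_j$-factors be regular at the fusion point, since only $x_i/x_j$ factors are singular there; that regularity holds in $V[[h]]$ via $f(z)$. It also requires that the $\cdotrl$ inverse identities cancel the transposed factors between $B^+_{[n]}$ and $B^{*}_{[n]}{}^{-1}$. I would first verify the case $n=2$, $\Lambda$ a single row or column, by direct computation. I would then argue by induction along the ordering \eqref{order}, mirroring the proof of the analogous statement in \cite{JLM0} and \cite{KS}.
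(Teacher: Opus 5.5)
Your skeleton (exchange relations for $B^+_{[n]}$ and $B^*_{[n]}$, fusion for each, and a combination through an intermediate projector) can be made to work. As written, however, the plan has a gap at its crux. The projector $\Ec'_\Lambda$ and the right-hand operators $\widetilde U_k$ are never identified, and the mechanism you propose for them (transposed factors and crossing symmetry \eqref{csym}) is not what happens.

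Look at the proof of Theorem \ref{thm_25}: there $U=R_{k\ts k+1}(y_k/y_{k+1})$ is moved through products of transposed $R$-matrices by the Yang--Baxter equation and re-emerges unchanged. In the same way, \eqref{re1} together with the Yang--Baxter equation gives
\begin{equation*}
\R_{k\ts k+1}(x_k/x_{k+1})\ts B^+_{[n]}(x)=P_{k\ts k+1}\ts B^+_{[n]}(s_kx)\ts P_{k\ts k+1}\ts \R_{k\ts k+1}(x_k/x_{k+1}).
\end{equation*}
This is the same exchange relation that $T^+_{[n]}(u)\vac$ satisfies as a consequence of \eqref{rbartt}, with the same $R$-matrix on both sides. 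The factors with arguments $x_ix_j$ stay inside $B^+_{[n]}$ and never surface.

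The same relation holds for $B^*_{[n]}$ by \eqref{re2}; this is exactly what the proof of Theorem \ref{thm_33} uses. It therefore also holds for $C=B^*_{[n]}(xe^{hc})^{-1}$, because inversion preserves the relation and the uniform shift $x\mapsto xe^{hc}$ does not change the ratios $x_i/x_j$.

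Thus $\widetilde U_k$ is just the left-hand operator again, and $\Ec'_\Lambda=\Ec_\Lambda$. Your (a) and (b) become $B\ts\Ec_\Lambda=\Ec_\Lambda\ts B\ts\Ec_\Lambda$ for $B=B^+_{[n]}(x)$ and for $B=C$. Both follow from the argument of \cite[Lemma 3.2]{JLM0} and \cite[Lemma 7.1 (a)]{BJK}, which uses only these exchange relations. The combination is then immediate:
$B^+_{[n]}(x)\ts C\ts\Ec_\Lambda=B^+_{[n]}(x)\ts\Ec_\Lambda C\ts\Ec_\Lambda=\Ec_\Lambda B^+_{[n]}(x)\ts\Ec_\Lambda C\ts\Ec_\Lambda=\Ec_\Lambda B^+_{[n]}(x)\ts C\ts\Ec_\Lambda$.

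Several of your other points should be corrected:
\begin{itemize}
\item The obstacles you anticipate (regularity of the $x_ix_j$-factors, and $\cdotrl$-cancellations between $B^+_{[n]}$ and $C$) do not arise.
\item The critical level plays no role in this lemma. It is needed only later, for the crossing-symmetry identities \eqref{identitya} and \eqref{identityb} used in Theorem \ref{thm_41}.
\item Setting $z_i=x_i=ze^{-c_i(\Lambda)h}$ in $\Rv_\Lambda(z_1,\ldots,z_n)$ is wrong, because it makes every argument $z_ie^{(c_i(\Lambda)-c_j(\Lambda))h}/z_j$ equal to $1$. The $z_i$ are auxiliary variables that are sent to $1$, and the contents enter through the ratios $x_i/x_j$.
\end{itemize}

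Once repaired, your argument is a factorwise version of the paper's proof, which is shorter. The proof of Theorem \ref{thm_33} already shows that $T^+_{[m]}(u)\vac\mapsto\bc_{[m]}(ze^{u_1},\ldots,ze^{u_m})$ is a well-defined $\CC[[h]]$-linear map $\Vccrit\to\dyhocritt[[z^{\pm 1}]]$. In other words, $\bc_{[n]}$ obeys the same exchange relations as $T^+_{[n]}(u)\vac$. Since this map acts only on the algebra factor, it commutes with multiplication by $\Ec_\Lambda$. The identity of \cite[Lemma 7.1 (a)]{BJK} at $u=0$ therefore transfers directly, without redoing any fusion computation. Your route gives the slightly finer identities for $B^+_{[n]}(x)$ and $B^*_{[n]}(xe^{hc})^{-1}$ separately, at the cost of running the fusion argument twice.
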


\begin{prf}  By the proof of Theorem \ref{thm_33}, the assignments
\beq\label{jjustamap}
 T_{[m]}^+(u_1,\ldots ,u_m)\vac \mapsto
\bc_{[m]}(v_1, \ldots ,v_m ) \big|_{v_1=ze^{u_1},\ldots ,v_m=ze^{u_m}}
\eeq
define a $\CC[[h]]$-module map $\Vccrit \to \dyhocritt[[z^{\pm 1}]]$. Moreover, by \cite[Lemma 7.1 (a)]{BJK}, which is based on \cite[Lemma 3.2]{JLM0}, we have
$$
T_{[n]}^+(u-c_1(\Lambda)h,\ldots, u-c_n(\Lambda)h)\vac\ts \Ec_\Lambda = \Ec_\Lambda \ts T_{[n]}^+(u-c_1(\Lambda)h,\ldots, u-c_n(\Lambda)h)\vac \ts \Ec_\Lambda .
$$
Thus, in particular, by setting $u=0$ we get
$$
T_{[n]}^+( -c_1(\Lambda)h,\ldots,  -c_n(\Lambda)h)\vac\ts \Ec_\Lambda = \Ec_\Lambda \ts T_{[n]}^+( -c_1(\Lambda)h,\ldots,  -c_n(\Lambda)h)\vac \ts \Ec_\Lambda .
$$
By applying the map \eqref{jjustamap} to this identity, we obtain \eqref{bneccc}, as required.
\end{prf}

In the next lemma,  the superscripts $1,2,3$   denote the following tensor factors:
\beq\label{superscripts_meaning}
\smalloverbrace{(\ndo\CC^N)^{\ot n}}^{1}
\ot
\smalloverbrace{(\ndo\CC^N) }^{2}
\ot
\smalloverbrace{\dyhocritt}^{3} .
\eeq
The terms  $D^{\ot n}  $ and $\Ec_\Lambda$ are applied to the   factor  of \eqref{superscripts_meaning} denoted by the superscript $1$.
  By $t_i$ with $i=1,\ldots ,n$ we denote the action of the transposition $t$ on the $i$-th tensor factor  of \eqref{superscripts_meaning}, so that all transposition are applied to the   factor  of \eqref{superscripts_meaning} denoted by the superscript $1$ as well. 
\begin{lem}\label{lemma_41}
Let   $x= ze^{-c(\Lambda)h}$ be the family of variables given by \eqref{zeovi} and $y$ a single variable. Define
\begin{align*}
&X_+ =\left(R_{n1}^{21}(ye^{Nh}/x)^\prime\right)^{-1},
&& Y_+=\RRR_{n1}^{21}(xye^{-Nh})^{\prime},\\
&Z_+=\left(R_{n1}^{21}(y /x)^\prime\right)^{-1}, && W_+=\RRR_{n1}^{21}(xy )^{\prime}.
\end{align*}
\begin{enumerate}
\item
We have
\begin{gather}
 \bc_{[n]}^{13}(x)\ts X_+\ts B^{+23}(y)\ts Y_+ = 
Z_+\ts B^{+23}(y)\ts  W_+\ts \bc_{[n]}^{13}(x).\label{identity1}
\end{gather}
\item  
Let $(X_+)^\sim$ be the inverse of $X_+$ with respect to the standard multiplication ``$\cdotrl$'' in $((\ndo\CC^N)^{\ot n})^{\text{op}}\ot  \ndo\CC^N $, which is uniquely determined by the property $(X_+)^\sim \cdotrl X_+=1$.
We have
\begin{align}
&
(X_+)^\sim
D^{\ot n}\ts
Z_+ =D^{\ot n}, \label{identitya}\\
&  
(W_+)^{t_1,\ldots ,t_n}
 D^{\ot n}
\left((Y_+)^{-1} \right)^{t_1,\ldots ,t_n}=D^{\ot n}, \label{identityb}
\end{align}

\item
For any $U\in\left\{(X_+)^\sim, (Y_+)^{-1}, Z_+, W_+ \right\}$ we have
\beq\label{eueue}
U\ts\Ec_\Lambda =\Ec_\Lambda \ts U\ts\Ec_\Lambda .
\eeq
\end{enumerate}
\end{lem}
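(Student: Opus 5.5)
The plan is to treat the three parts separately, since they rest on different ingredients: part (1) on the generalized reflection equations, part (2) on crossing symmetry, and part (3) on the fusion procedure. Throughout, $c=-N/2$, so $e^{hc}=e^{-Nh/2}$ and $e^{2hc}=e^{-Nh}$.

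For part (1) we write $\bc_{[n]}(x)=B^+_{[n]}(x)\,B^*_{[n]}(xe^{hc})^{-1}$ and move $B^{+23}(y)$ through the two factors in turn.

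1. Apply \eqref{re_gen3} with $m=1$ to the pair $B^{*}_{[n]}(xe^{hc})$, $B^{+}(y)$. The $R$-matrix arguments become $ye^{-hc}/(xe^{hc})=ye^{Nh}/x$ and $xe^{hc}ye^{-hc}=xy$ on one side, and $xye^{2hc}=xye^{-Nh}$ and $y/x$ on the other. These are exactly the factors $X_+,W_+,Y_+,Z_+$.
2. Invert that relation to get a commutation rule for $B^*_{[n]}(xe^{hc})^{-1}$ past $B^{+23}(y)$ with the $R$-matrix products attached.
3. Use \eqref{re_gen1} with $m=1$ to move $B^{+23}(y)$ past $B^+_{[n]}(x)$, and check that the resulting $R$-matrix factors telescope into \eqref{identity1}.

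The main risk is bookkeeping. The products $R'$ and $\RRR'$ are ordered products of inverses and partial transposes, so I would first carry out the case $n=1$ directly from \eqref{re1} and \eqref{re3} to fix the placement of the factors. Then I would pass to general $n$, either by induction or by invoking the generalized equations, which hold "in the same form".

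Part (2) follows from the crossing symmetry identities \eqref{csym}, applied factorwise.

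1. For \eqref{identitya}, observe that $X_+$ and $Z_+$ are products over $r$ of $R_{n+1\,r}(\cdot)^{-1}$ whose arguments differ by $e^{Nh}$. The identity $(R_{21}(ze^{Nh})^{-1})^{\sim}\cdots$ in the form $R(ze^{Nh})^{t_1}D_1(R(z)^{-1})^{t_1}=D_1$, transposed appropriately, gives $(R_{n+1\,r}(ye^{Nh}/x_r)^{-1})^{\sim}\,D_r\,R_{n+1\,r}(y/x_r)^{-1}=D_r$ for a single factor.
2. Telescope over $r$. This works because the factors with different $r$ act on disjoint copies on the $(\ndo\CC^N)^{\ot n}$ side, and because the $\cdotrl$-inverse reverses the order appropriately.
3. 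For \eqref{identityb}, apply the total transpose $t_1\cdots t_n$. This converts $W_+$ and $Y_+$ into products of $R_{n+1\,r}(x_ry)^{-1}$ and $R_{n+1\,r}(x_rye^{-Nh})^{-1}$.
4. Apply the second crossing identity in \eqref{csym} with the shift $e^{Nh}$, again factor by factor, to obtain \eqref{identityb}.

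Part (3) is the fusion-procedure step, which I expect to be the main obstacle because it needs a precise form of the fusion statement. The template is the proof of \eqref{bneccc}. There, $T^+_{[n]}(\ldots)\Ec_\Lambda=\Ec_\Lambda T^+_{[n]}\Ec_\Lambda$ follows from \cite[Lemma 7.1]{BJK} and \cite[Lemma 3.2]{JLM0}, since $R$-matrix products of the shape $R_{n+1,1}(y/x_1)\cdots R_{n+1,n}(y/x_n)$ with $x=ze^{-c(\Lambda)h}$ satisfy exactly this property.

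1. For $Z_+$ and $(X_+)^\sim$: with $x_r=ze^{-c_r(\Lambda)h}$, these are (up to scalar series) such monodromy-type products in the auxiliary space $n+1$. The key input is that $\Ec_\Lambda$ is obtained from $\Rv_\Lambda$ and intertwines products of $R$-matrices evaluated at the content-shifted points, $\Rv_\Lambda\, R_{n+1,1}\cdots R_{n+1,n}=R_{n+1,n}\cdots R_{n+1,1}\,\Rv_\Lambda$ by repeated Yang--Baxter. Specializing then gives $\Ec_\Lambda U=\Ec_\Lambda U\Ec_\Lambda$ or $U\Ec_\Lambda=\Ec_\Lambda U\Ec_\Lambda$.
2. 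Check the side on which $\Ec_\Lambda$ absorbs: the relevant factor must appear in the right order, so that the identity reads $U\Ec_\Lambda=\Ec_\Lambda U\Ec_\Lambda$.
3. For $W_+$ and $(Y_+)^{-1}$, which involve partial transposes $t_r$, reduce to the previous case by taking the total transpose $t_1\cdots t_n$. Use that $\Ec_\Lambda^{t_1\cdots t_n}$ is again a primitive idempotent of the same type, being built from $\Rv$, which is $t_1t_2$-symmetric up to $P$.
4. Transposing reverses the order, so the required one-sided absorption becomes its mirror image. It therefore remains to verify the mirror identity for $\Ec_\Lambda^{t}$, which holds by the same argument since $R^{t_1t_2}=R_{21}$. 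If this symmetry fails, the fallback is to use the crossing identities from part (2) to rewrite $W_+$ and $(Y_+)^{-1}$ as conjugates of untransposed products by $D^{\ot n}$, which commutes with $\Ec_\Lambda$.
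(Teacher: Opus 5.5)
Your plan follows the paper's proof. Part (1) comes from \eqref{re_gen3} with $x$ replaced by $xe^{hc}$; for $c=-N/2$ this produces exactly $X_+^{-1},W_+,Y_+,Z_+^{-1}$. You then invert that relation, combine it with \eqref{re_gen1} for $m=1$, and the leftover $R^{12}_{n1}(x/y)^{\pm1}$ cancel against $Z_+$ by unitarity. Part (2) is factorwise crossing symmetry \eqref{csym}. Part (3) the paper does not reprove: it simply invokes \cite[Lemma 7.1 (b), (c)]{BJK} for all four operators. So your transposition argument for $W_+$ and $(Y_+)^{-1}$ is unnecessary. Your fallback would not work anyway, since crossing symmetry trades $((R^{-1})^{t_r})^{-1}$ for $D_rR^{t_r}D_r^{-1}$ at a shifted argument but never removes the partial transpose.

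There is one concrete error, in your treatment of \eqref{identitya}. Since $R^{21}_{n1}(\cdot)^\prime=\prod_r R_{n+1\,r}(\cdot)^{-1}$, the operators $X_+$ and $Z_+$ are products of $R$-matrices, not of their inverses:
\[
X_+=R_{n+1\,n}(ye^{Nh}/x_n)\cdots R_{n+1\,1}(ye^{Nh}/x_1),\qquad Z_+=R_{n+1\,n}(y/x_n)\cdots R_{n+1\,1}(y/x_1).
\]
The single-factor identity you need is therefore $R_{n+1\,r}(ye^{Nh}/x_r)^\sim D_r\ts R_{n+1\,r}(y/x_r)=D_r$. This is the second identity in \eqref{csym} with the tensor factors relabelled, which gives $R_{n+1\,r}(ye^{Nh}/x_r)^\sim=D_r\ts R_{n+1\,r}(y/x_r)^{-1}D_r^{-1}$. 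Since $(X_+)^\sim=R_{n+1\,1}(ye^{Nh}/x_1)^\sim\cdots R_{n+1\,n}(ye^{Nh}/x_n)^\sim$, the product then telescopes to $D^{\ot n}$.

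The identity you wrote, with inverses on both $R$-matrices, is equivalent to the crossing relation with $e^{Nh}$ and $D$ replaced by $e^{-Nh}$ and $D^{-1}$. It does not follow from \eqref{csym}, and it is false: its rational limit $(R(u+N)^{-1})^{t_2}R(u)^{t_2}\propto 1$ already fails.

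Similarly, in \eqref{identityb} what appears is $((Y_+)^{-1})^{t_1,\ldots,t_n}$, not the inverse of $(Y_+)^{t_1,\ldots,t_n}$. Crossing symmetry identifies it with $D_n^{-1}R_{n+1\,n}(x_ny)D_n\cdots D_1^{-1}R_{n+1\,1}(x_1y)D_1$. This then cancels against $(W_+)^{t_1,\ldots,t_n}=R_{n+1\,1}(x_1y)^{-1}\cdots R_{n+1\,n}(x_ny)^{-1}$.
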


\begin{prf}
(1) The identity can be verified by a straightforward computation which relies on the relations \eqref{re_gen1} and  \eqref{re_gen3} for $m=1$. 

\noindent  (2) Both equalities follow from the crossing symmetry properties \eqref{csym}.

\noindent (3) This follows from \cite[Lemma 7.1 (b), (c)]{BJK}, which is based on \cite[Lemma 3.3]{JLM0}.
\end{prf}

In the next lemma,  the superscripts $0,1,2 $   denote the following tensor factors:
\beq\label{superscripts_meaningb}
\smalloverbrace{(\ndo\CC^N) }^{0}
\ot
\smalloverbrace{(\ndo\CC^N)^{\ot n}}^{1}
\ot
\smalloverbrace{\dyhocritt}^{2} .
\eeq
The terms  $D^{\ot n}  $ and $\Ec_\Lambda$ are applied to the   factor  of \eqref{superscripts_meaningb} denoted by the superscript $1$. 
 By $t_i$ with $i=1,\ldots ,n$ we denote the action of the transposition $t$ on the $(i+1)$-th tensor factor  of \eqref{superscripts_meaningb}, so that all transposition are applied to the   factor  of \eqref{superscripts_meaningb} denoted by the superscript $1$ as well. 
We omit the proof of the lemma as it  proceeds analogously to the proof of Lemma \ref{lemma_41}.

\begin{lem}\label{lemma_42}
Let   $x= ze^{-c(\Lambda)h}$ be the family of variables defined by \eqref{zeovi} and $y$ a single variable. Define
\begin{align*}
&X_*=R_{1n}^{10}(xe^{-Nh/2}/y)^\prime,&&Y_*=\RRR_{1n}^{10}( xy e^{ - Nh/2} )^\prime , \\
&Z_*=R_{1n}^{10}(xe^{ Nh/2}/y)^\prime,&&W_*=\RRR_{1n}^{10}( xy e^{  Nh/2}  )^\prime.
\end{align*}
\begin{enumerate}
\item
We have
\begin{gather*}
Z_*\ts B^{*02}(y)\ts  W_*\ts \bc_{[n]}^{12}(x)=
\bc_{[n]}^{12}(x)\ts X_*\ts B^{*02}(y)\ts Y_*
. 
\end{gather*}
\item 
Let $(W_*)^\sim$ be the inverse of $W_*$ with respect to the standard multiplication ``$\cdotrl$'' in $(\ndo\CC^N)^{\text{op}}   \ot  (\ndo\CC^N)^{\ot n}  $, which is uniquely determined by the property $(W_*)^\sim \cdotrl W_*=1$.
We have
\begin{align*}
&\left((Z_*)^{-1}\right)^{t_1,\ldots ,t_n}
D^{\ot n}\ts
(X_*)^{t_1,\ldots ,t_n} =D^{\ot n},  \\
&  
Y_*\ts
 D^{\ot n}\ts
(W_*)^\sim=D^{\ot n}.  
\end{align*} 
\item
For any $U\in\left\{  X_* ,  Y_* , (Z_*)^{-1}, (W_*)^\sim\right\}$ we have
$$
U\ts\Ec_\Lambda =\Ec_\Lambda \ts U\ts\Ec_\Lambda .
$$
\end{enumerate}
\end{lem}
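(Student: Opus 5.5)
The proof follows that of Lemma \ref{lemma_41}, with the roles of $B^+$ and $B^*$ exchanged and the auxiliary space $0$ placed to the left of the $n$ fused factors.

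\emph{Part (1).} Write $\bc_{[n]}(x)=B^+_{[n]}(x)\,B^*_{[n]}(xe^{-Nh/2})^{-1}$; at $c=-N/2$ we have $e^{hc}=e^{-Nh/2}$. We move $B^{*02}(y)$ through each of the two factors in turn.
\begin{itemize}
\item[(i)] To pass it through $B^+_{[n]}(x)$, use the generalized relation \eqref{re_gen3} with the roles of the tensor groups arranged so that the single copy $0$ carries $B^*$ and the $n$ copies carry $B^+$. This is the case $n=1$ in the $*$-slot and general $m=n$ in the $+$-slot. The four $R$-matrix products appearing there are exactly $R_{1n}^{10}(\cdot)^\prime$ and $\RRR_{1n}^{10}(\cdot)^\prime$ with shifts $e^{\pm hc}=e^{\mp Nh/2}$. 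This is how $X_*,Y_*$ (left) and $Z_*,W_*$ (right) arise.
\item[(ii)] To pass it through $B^*_{[n]}(xe^{-Nh/2})^{-1}$, invert \eqref{re_gen2} with $n=1$ and $m=n$. The $\R$-products produced there, evaluated at the shifted arguments $xe^{-Nh/2}$, must cancel against the remaining parts of the $R$-factors from step (i).
\end{itemize}
This cancellation should occur because $R$ and $\R$ differ only by the scalar $f(z)/(1-ze^{-h})$, and the scalar factors combine via the crossing/unitarity relations for $f$. So I would first record the scalar identity needed and then check that the matrix parts match by Yang--Baxter, as in the $m=1$ computation used for \eqref{identity1}.

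\emph{Part (2).} Both identities are consequences of the crossing symmetry \eqref{csym}, applied factorwise.
\begin{itemize}
\item For the first identity, apply the first relation in \eqref{csym}, transposed in the space indexed by $r$, to each pair $R_{r0}$. The shift $e^{Nh}$ between $Z_*$ and $X_*$ (arguments $xe^{Nh/2}/y$ versus $xe^{-Nh/2}/y$) is exactly the crossing shift. Because $D^{\ot n}$ is a tensor product, the identity for the product over $r=1,\ldots,n$ telescopes: each factor conjugates $D_r$ to itself.
\item For the second identity, the defining property of $(W_*)^\sim$ together with the second crossing relation gives $Y_*\,D^{\ot n}\,(W_*)^\sim=D^{\ot n}$. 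The arguments of $Y_*$ and $W_*$ differ again by $e^{Nh}$.
\end{itemize}
The ordering of factors in the products \eqref{rnm2}, \eqref{rnm3} has to be tracked so that consecutive factors are contracted in the right order. I expect this to be routine.

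\emph{Part (3).} This is the statement of \cite[Lemma 7.1 (b), (c)]{BJK}, based on \cite[Lemma 3.3]{JLM0}. For a product over $r=1,\ldots,n$ of $R_{0r}$ evaluated at $x_r=ze^{-c_r(\Lambda)h}$ times a fixed parameter, $U\Ec_\Lambda=\Ec_\Lambda U\Ec_\Lambda$ holds because $\Ec_\Lambda$ is obtained by fusion. Transposition in the factors $1,\ldots,n$, and inversion with respect to $\cdotrl$, preserve this property, since $\Ec_\Lambda^{t_1\cdots t_n}$ is the analogous idempotent.

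\emph{Main obstacle.} The main obstacle is part (1): the interplay of the $\R$-normalized relation \eqref{re_gen2} with the $R$-normalized relation \eqref{re_gen3}, where the scalar factors $f$ and $(1-ze^{-h})$ must cancel exactly at the critical shift. I would verify this cancellation first for $n=1$ and then extend it by the factorization of the products.
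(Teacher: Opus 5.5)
Your ingredients are the ones the paper's omitted proof (``analogous to Lemma~\ref{lemma_41}'') relies on: \eqref{re_gen3} and \eqref{re_gen2} with a single copy in space $0$ for (1), crossing symmetry factor by factor for (2), and \cite[Lemma 7.1]{BJK} for (3). Your account of how (1) works, however, is wrong, and the step you call the main obstacle does not exist. Nothing has to cancel, and neither crossing, a scalar identity for $f$, nor Yang--Baxter enters. The critical level plays no role in (1).

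The idea you are missing is unitarity. Since $\wht{R}_{s0}(z)^{-1}=\wht{R}_{0s}(1/z)$, and $R$ and $\R$ are scalar multiples of $\wht{R}$, setting $x'=xe^{-Nh/2}$ gives
\[
X_*=\alpha\,\R^{01}_{1n}(y/x'),\qquad Y_*=\beta\,\RRRR^{01}_{1n}(1/x'y)
\]
for scalar series $\alpha,\beta$; the orderings of the factors agree. Now take \eqref{re_gen2} with $B^*(y)$ in space $0$ and $B^*_{[n]}(x')$ in spaces $1,\ldots,n$. Multiplying it by $B^{*12}_{[n]}(x')^{-1}$ on both sides gives
\[
B^{*12}_{[n]}(x')^{-1}X_*B^{*02}(y)Y_*=Y_*B^{*02}(y)X_*B^{*12}_{[n]}(x')^{-1},
\]
with $\alpha\beta$ occurring on both sides. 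With the same grouping and $e^{\mp hc}=e^{\pm Nh/2}$, \eqref{re_gen3} reads literally
\[
Z_*B^{*02}(y)W_*B^{+12}_{[n]}(x)=B^{+12}_{[n]}(x)Y_*B^{*02}(y)X_*.
\]
It produces $Y_*B^*X_*$, not $X_*B^*Y_*$, and reversing that order is exactly what \eqref{re_gen2} does. Chaining the two,
\[
\bc^{12}_{[n]}(x)X_*B^{*02}(y)Y_*=B^{+12}_{[n]}(x)\,Y_*B^{*02}(y)X_*\,B^{*12}_{[n]}(x')^{-1}=Z_*B^{*02}(y)W_*\bc^{12}_{[n]}(x).
\]
The same computation works for arbitrary $c$, with $e^{\mp Nh/2}$ replaced by $e^{\pm hc}$. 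The critical level is needed only in (2), where the crossing shift $e^{Nh}$ must equal $e^{-2hc}$.

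Two smaller points. In the second identity of (2), the transpositions act in space $0$ while $D$ acts in space $s$. You must first use $R(z)^{t_1t_2}=R_{21}(z)$ to rewrite $(R_{s0}^{\pm1})^{t_0}=(R_{0s}^{\pm1})^{t_s}$; only then does the second relation in \eqref{csym} apply and the product telescope. In (3), your remark about transposing in the factors $1,\ldots,n$ and passing to $\Ec_\Lambda^{t_1\cdots t_n}$ is misplaced. Here the transpositions and the $\cdotrl$-inverse act only in space $0$, and the citation of \cite{BJK} is what the paper uses.
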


The following theorem is the main result of this section.

\begin{thm}\label{thm_41}
\begin{enumerate}
\item  All coefficients of   $\mathcal{B}_\Lambda(z)$ belong to the center of   $\dyhocritt$.
\item All coefficients of   $B_\Lambda(z)$ belong to the submodule of invariants $\mathfrak{z}(\Mccrit)$.
\end{enumerate}
\end{thm}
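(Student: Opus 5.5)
Part (1) comes first; part (2) then follows quickly. Since $\dyhocritt$ is topologically generated by the coefficients of $B^+(y)$ and $B^*(y)$, it suffices to show that $\mathcal{B}_\Lambda(z)$ commutes with $B^{+}(y)$ and with $B^{*}(y)$.

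For $B^+(y)$, work in the tensor factors \eqref{superscripts_meaning} and write $\bc=\bc_{[n]}^{13}(x)$ with $x=ze^{-c(\Lambda)h}$. Then
$$\mathcal{B}_\Lambda(z)=\tr_{1,\ldots,n}\,\bc\, D^{\ot n}\Ec_\Lambda.$$
Starting from \eqref{identity1} of Lemma \ref{lemma_41}, I isolate $\bc\, B^{+23}(y)$. Since $X_+$ acts from the left on factor $2$ and from the "wrong side" on factor $1$, I multiply by $(X_+)^\sim$ with respect to ``$\cdotrl$'' and by $(Y_+)^{-1}$ on the right. Then I multiply by $D^{\ot n}\Ec_\Lambda$ and take $\tr_{1,\ldots,n}$. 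Under the partial trace, the factors acting on tensor factor $1$ may be cycled, at the cost of transposing those parts that were applied in the opposite order. This is exactly why \eqref{identitya} and \eqref{identityb} are stated with $\sim$ and with $t_1,\ldots,t_n$.

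Using the projector property \eqref{eueue} together with $\Ec_\Lambda^2=\Ec_\Lambda$ and \eqref{bneccc}, I can insert and remove $\Ec_\Lambda$ freely. This lets me move $Z_+$ and $(X_+)^\sim$ next to $D^{\ot n}$ and cancel them by \eqref{identitya}. Similarly, $W_+$ and $(Y_+)^{-1}$ become adjacent after a transposition under the trace and cancel by \eqref{identityb}. What remains is
$$\tr_{1,\ldots,n}\,\bc\,D^{\ot n}\Ec_\Lambda\; B^{+}(y)=B^{+}(y)\,\tr_{1,\ldots,n}\,\bc\,D^{\ot n}\Ec_\Lambda,$$
that is, $[\mathcal{B}_\Lambda(z),B^+(y)]=0$. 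The argument for $B^*(y)$ is the same, using Lemma \ref{lemma_42} with $(W_*)^\sim$ in place of $(X_+)^\sim$.

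The main obstacle is the bookkeeping in the trace step. One must check that cycling the factors under $\tr_{1,\ldots,n}$ is compatible with the mixed ``$\cdotrl$'' multiplications, and that each $\Ec_\Lambda$ lands where \eqref{eueue} applies. Following \cite[Prop. 7.4]{BJK}, I would do this one tensor factor at a time. I would also verify that the critical value $c=-N/2$ is exactly what makes the shifts $e^{\pm Nh}$ in $X_+,Y_+$ (via $e^{\pm 2hc}$) match the crossing symmetry \eqref{csym}. A further technical point is that the inverse $B^*_{[n]}(\cdot)^{-1}$ inside $\bc$ is only defined in the completion $\dyhocritt$. This is harmless, since the identities in Lemma \ref{lemma_41} are stated there.

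For (2), use \eqref{blambdablambda}: $B^*(y)B_\Lambda(z)=B^*(y)\mathcal{B}_\Lambda(z)1=\mathcal{B}_\Lambda(z)B^*(y)1$ by (1). By \eqref{actionbstar} with $m=0$ (the empty product), or directly from Theorem \ref{thm_25}, we have $B^*(y)1=1$. Hence $B^*(y)B_\Lambda(z)=B_\Lambda(z)$, so all coefficients of $B_\Lambda(z)$ lie in $\mathfrak{z}(\Mccrit)$.
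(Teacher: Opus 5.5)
Your proposal is correct and follows essentially the same route as the paper. You rewrite $\mathcal{B}_\Lambda(z)B^+(y)$ via \eqref{identity1} using $(X_+)^\sim$ and $(Y_+)^{-1}$, cycle under the trace, and insert copies of $\Ec_\Lambda$ via \eqref{eueue} and \eqref{bneccc}; you then cancel with \eqref{identitya}, transpose and cancel with \eqref{identityb}, and treat $B^*(y)$ in parallel via Lemma \ref{lemma_42}, with part (2) deduced exactly as in the paper. Your explicit justification of $B^*(y)1=1$ (the $m=0$ case of Theorem \ref{thm_25}) and your use of a separate variable $y$ there are slightly more careful than the paper's write-up.
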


\begin{prf}
(1) 
Clearly, it suffices to check that $\mathcal{B}_\Lambda(z)$ commutes with $B^+(y)$ and $B^*(y)$.
Let us prove that $\mathcal{B}_\Lambda(z)$ commutes with $B^+(y)$. 
Denote by the superscripts $1,2,3$   the tensor factors as in \eqref{superscripts_meaning}. Throughout the proof, we use the notation from Lemma \ref{lemma_41}.
We have
\begin{align}
\mathcal{B}_\Lambda(z)\ts B^+(y) = &\,
\tr_{1,\ldots ,n} \, \mathcal{B}^{ 13}_{[n]}(ze^{-c(\Lambda)h}) \ts D^{\ot n}\ts \Ec_\Lambda\ts B^{+23}(y)\non\\
=&\,\tr_{1,\ldots ,n} \, \mathcal{B}^{ 13}_{[n]}(ze^{-c(\Lambda)h}) \ts B^{+23}(y)\ts D^{\ot n}\ts \Ec_\Lambda,\label{racun1}
\end{align}
where $ze^{-c(\Lambda)h}$ is given by \eqref{zeovi}. 
By using \eqref{identity1}, we rewrite \eqref{racun1} as
$$
 \tr_{1,\ldots ,n} \,
\left((X_+)^\sim\cdotrl\left(
Z_+\ts
B^{+23}(y)\ts 
W_+\ts 
\bc_{[n]}^{13}(ze^{-c(\Lambda)h})\ts (Y_+)^{-1}\right)\right)  D^{\ot n}\ts \Ec_\Lambda.
$$
Next, by the cyclic property of the trace, this equals
$$
 \tr_{1,\ldots ,n} \,
 (X_+)^\sim \ts D^{\ot n}\ts \Ec_\Lambda  \ts
Z_+\ts
B^{+23}(y)\ts 
W_+\ts 
\bc_{[n]}^{13}(ze^{-c(\Lambda)h})\ts (Y_+)^{-1}  .
$$
As the idempotent $ \Ec_\Lambda$ commutes with $D^{\ot n}$, we can use \eqref{eueue}, \eqref{bneccc} and the cyclic property of the trace to rewrite the above expression as
\beq\label{rn_123}
 \tr_{1,\ldots ,n} \,
  \Ec_\Lambda (X_+)^\sim \ts D^{\ot n}\ts \Ec_\Lambda  \ts
Z_+\ts \Ec_\Lambda \ts
B^{+23}(y)\ts \Ec_\Lambda\ts 
W_+\ts \Ec_\Lambda\ts 
\bc_{[n]}^{13}(ze^{-c(\Lambda)h})\ts \Ec_\Lambda\ts (Y_+)^{-1}  \ts \Ec_\Lambda.
\eeq
By  \eqref{eueue} we have
$ 
\Ec_\Lambda  \ts
Z_+\ts \Ec_\Lambda
=
Z_+\ts \Ec_\Lambda,
$ 
so that \eqref{rn_123} is equal to
\begin{align*}
&\tr_{1,\ldots ,n} \,
  \Ec_\Lambda (X_+)^\sim \ts D^{\ot n}\ts  
Z_+\ts \Ec_\Lambda \ts
B^{+23}(y)\ts \Ec_\Lambda\ts 
W_+\ts \Ec_\Lambda\ts 
\bc_{[n]}^{13}(ze^{-c(\Lambda)h})\ts \Ec_\Lambda\ts (Y_+)^{-1}  \ts \Ec_\Lambda\\
=&\,
\tr_{1,\ldots ,n} \,
  \Ec_\Lambda  \ts D^{\ot n} \ts \Ec_\Lambda\ts
B^{+23}(y)\ts \Ec_\Lambda\ts 
W_+\ts \Ec_\Lambda\ts 
\bc_{[n]}^{13}(ze^{-c(\Lambda)h})\ts \Ec_\Lambda\ts (Y_+)^{-1}  \ts \Ec_\Lambda,
\end{align*}
where the equality follows from \eqref{identitya}. By employing the cyclic property of the trace to move the rightmost copy of $ \Ec_\Lambda$ to the left and then using \eqref{eueue}, we bring the above expression to
\begin{align}
&\tr_{1,\ldots ,n} \,
    D^{\ot n} \ts  
B^{+23}(y)\ts  
W_+\ts  
\bc_{[n]}^{13}(ze^{-c(\Lambda)h})\ts \Ec_\Lambda\ts (Y_+)^{-1}   \non\\
=&\,
\tr_{1,\ldots ,n} \,
B^{+23}(y)\ts  
W_+\ts  
\bc_{[n]}^{13}(ze^{-c(\Lambda)h})\ts \Ec_\Lambda\ts (Y_+)^{-1} \ts D^{\ot n} \non \\ 
=&\,
B^{+ }(y)\ts 
\tr_{1,\ldots ,n} \,
W_+\ts  
\bc_{[n]}^{13}(ze^{-c(\Lambda)h})\ts \Ec_\Lambda\ts (Y_+)^{-1} \ts D^{\ot n}. \label{rn_124} 
\end{align}
Let
$$
G=W_+\Fand H= \bc_{[n]}^{13}(ze^{-c(\Lambda)h})\ts \Ec_\Lambda\ts (Y_+)^{-1} \ts D^{\ot n}.
$$
Since
$$
\tr_{1,\ldots ,n} \, G\ts H
=\tr_{1,\ldots ,n} \, G^{t_1,\ldots ,t_n}\ts H^{t_1,\ldots ,t_n},
$$
we conclude that \eqref{rn_124} equals
$$
B^{+ }(y)\ts 
\tr_{1,\ldots ,n} \,
(W_+)^{t_1,\ldots ,t_n}\ts  D^{\ot n}\ts ((Y_+)^{-1})^{t_1,\ldots ,t_n}
\left(\bc_{[n]}^{13}(ze^{-c(\Lambda)h})\ts \Ec_\Lambda\right)^{t_1,\ldots ,t_n}. 
$$
Finally, by \eqref{identityb}, this is equal to
\begin{align*}
&B^{+ }(y)\ts 
\tr_{1,\ldots ,n} \,
   D^{\ot n} 
\left(\bc_{[n]}^{13}(ze^{-c(\Lambda)h})\ts \Ec_\Lambda\right)^{t_1,\ldots ,t_n}\\
=&\, B^{+ }(y)\ts 
\tr_{1,\ldots ,n} \,
 \bc_{[n]}^{13}(ze^{-c(\Lambda)h})\ts \Ec_\Lambda \ts  D^{\ot n} \\
=&\,   B^+(y)\ts\mathcal{B}_\Lambda(z),
\end{align*}
as required.

It remains to prove that $\mathcal{B}_\Lambda(z)$ commutes with $B^*(y)$. 
Denote by the superscripts $0,1,2 $   the tensor factors as in \eqref{superscripts_meaningb}. 
We have
\begin{align*}
B^*(y)\ts \mathcal{B}_\Lambda(z)   = &\,
\tr_{1,\ldots ,n} \,B^{+02}(y)\ts \mathcal{B}^{ 12}_{[n]}(ze^{-c(\Lambda)h}) \ts D^{\ot n}\ts \Ec_\Lambda   , 
\end{align*}
where $ze^{-c(\Lambda)h}$ is given by \eqref{zeovi}. One can show that this equals  $ \mathcal{B}_\Lambda(z) B^*(y)$ by the arguments which go in parallel with the first part of the proof and rely on Lemma \ref{lemma_42}.

\noindent(2) By using the first assertion of the theorem and \eqref{blambdablambda},    we get
$$
B^*(z)\ts  B_\Lambda(z)
=B^*(z)\ts  \mathcal{B}_\Lambda(z)1
=\mathcal{B}_\Lambda(z)\ts B^*(z) 1
=\mathcal{B}_\Lambda(z)  1
=B_\Lambda(z),
$$
as required.
\end{prf}

Note that the series
 $B_\Lambda(z)$, defined by  \eqref{belambda_10}, can be regarded as an element of $\yhg[[z^{\pm 1}]]$. This  point of view is adopted in the following proposition.

\begin{pro}\label{Prop4455}
The coefficients of all series  $B_\Lambda(z)$ pairwise commute.
\end{pro}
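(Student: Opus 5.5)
The plan is to reduce the commutativity in $\yhg$ to the centrality in $\dyhocritt$ established in Theorem \ref{thm_41}(1). By \eqref{blambdablambda}, $B_\Lambda(z)=\mathcal{B}_\Lambda(z)1$, where $1\in\Mccrit\equiv\yhg$ and the action is the one from Theorem \ref{thm_25}. Take two standard tableaux $\Lambda$, $M$ and the corresponding series. The first step is to observe that the action of $B^+(x)$ on $\yhg$ is given by \eqref{actionbplus}, i.e. by left multiplication by $B(x)$.

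\textbf{Step 1.} Show that every element $\mathcal{C}$ of the closed subalgebra of $\dyhocritt$ generated by the coefficients of $B^+(x)$ and $B^*(x)$ acts on $\yhg$ so that $\mathcal{C}\,a$ is determined by $a$ and $\mathcal{C}1$. More precisely, I would prove that any central element $\mathcal{C}$ acts on $\yhg$ by right multiplication by $\mathcal{C}1$:
\[
\mathcal{C}\,a=\mathcal{C}\,B^+\cdots 1=B\cdots\mathcal{C}1=a\,(\mathcal{C}1).
\]
Here $a$ is a product of coefficients of $B(x)$, and each such product equals the corresponding product of $B^+$-coefficients applied to $1$ by \eqref{actionbplus}. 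Centrality lets $\mathcal{C}$ pass through the $B^+$'s. Since $\yhg$ is topologically spanned by such monomials (together with $h$-adic limits and continuity of the action), the formula $\mathcal{C}a=a\,(\mathcal{C}1)$ holds for all $a\in\yhg$.

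\textbf{Step 2.} Apply Step 1 with $\mathcal{C}$ ranging over the coefficients of $\mathcal{B}_M(w)$ and $a$ over the coefficients of $B_\Lambda(z)$. In the other order, the coefficients of $\mathcal{B}_\Lambda(z)\mathcal{B}_M(w)$ and of $\mathcal{B}_M(w)\mathcal{B}_\Lambda(z)$ agree, since both are central. Acting on $1$ gives
\[
B_\Lambda(z)B_M(w)=\mathcal{B}_M(w)\mathcal{B}_\Lambda(z)1=\mathcal{B}_\Lambda(z)\mathcal{B}_M(w)1=B_M(w)B_\Lambda(z),
\]
where each product is expanded coefficientwise in $\yhg$.

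The main obstacle is analytic and lies in Step 1. The series $\mathcal{B}_\Lambda(z)$ lives in the completion $\dyhocritt[[z^{\pm1}]]$, and its coefficients are infinite sums involving $b^{*(r)}_{ij}$. One must check three things:
\begin{itemize}
\item the action of $\dyhocritt$ on the restricted module $\yhg$ is well defined and continuous;
\item each coefficient of $B_\Lambda(z)$ is a genuine element of $\yhg$;
\item the action commutes with the $h$-adic limits used when moving a central element past infinitely many $B^+$-coefficients.
\end{itemize}
The restrictedness from Theorem \ref{thm_25} is what guarantees that, modulo $h^k$, only finitely many $b^{*(r)}$ with $r\ll 0$ contribute on any given vector, and hence that $I_p$ acts nilpotently modulo $h^p$ in the appropriate sense. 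I would first verify this continuity modulo $h^k$ and then pass to the limit.
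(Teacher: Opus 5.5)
Your argument is correct and is essentially the paper's proof. The paper also writes $B_\Lambda(x)B_\Gamma(y)=B_\Lambda(x)\,\mathcal{B}_\Gamma(y)1$, moves $B_\Lambda(x)$ (which acts by left multiplication, i.e.\ through $B^+$ by \eqref{actionbplus}) past the central $\mathcal{B}_\Gamma(y)$ from Theorem \ref{thm_41}(1), swaps the two central series, and reverses the steps; this is exactly your identity $\mathcal{C}a=a\,(\mathcal{C}1)$ used twice.

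The paper does not discuss the analytic points you raise, namely that $\dyhocritt$ genuinely acts on $\yhg$ compatibly with $h$-adic limits; it takes this action for granted via \eqref{blambdablambda}. If you check them, note that restrictedness controls the $b^{*(r)}_{ij}$ only vector by vector, not uniformly on $\yhg$, and this matters because the ideals $I_p$ are two-sided.
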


\begin{prf}
Choose any two   series $B_\Lambda(x)$ and $B_\Gamma(y)$. By using \eqref{blambdablambda} and the first assertion of Theorem \ref{thm_41},  we get
\begin{align*}
&B_\Lambda(x)\ts B_\Gamma(y)
= B_\Lambda(x)\ts \mathcal{B}_\Gamma(y)  1
= \mathcal{B}_\Gamma(y)\ts B_\Lambda(x)   1
= \mathcal{B}_\Gamma(y)\ts \mathcal{B}_\Lambda(x)   1\\
= &\,\mathcal{B}_\Lambda(x)\ts\mathcal{B}_\Gamma(y)    1
= \mathcal{B}_\Lambda(x)\ts B_\Gamma(y)    1
=  B_\Gamma(y)\ts \mathcal{B}_\Lambda(x)   1
=  B_\Gamma(y)\ts B_\Lambda(x),
\end{align*}
as required.
\end{prf}

The next corollary follows by applying the maps from Proposition \ref{pro_21}, along with Remark \ref{tw_10}, to the commutative family established by Proposition \ref{Prop4455}.

\begin{kor}
The images of the coefficients of all series $B_\Lambda(z)$ under the map \eqref{map1_10} (resp. \eqref{map2_10}) form a commutative family in the orthogonal (resp.
symplectic) twisted $h$-Yangian. 
\end{kor}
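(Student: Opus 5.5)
The plan is to apply the algebra homomorphisms of Proposition \ref{pro_21} directly to the commutative family of Proposition \ref{Prop4455}. Regard $B_\Lambda(z)$ as a series in $\yhg[[z^{\pm 1}]]$, as in the discussion preceding Proposition \ref{Prop4455}. That proposition says that for any two standard tableaux $\Lambda,\Gamma$ the coefficients of $B_\Lambda(x)$ and $B_\Gamma(y)$ commute in $\yhg$.

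The first step is to check that each coefficient of $B_\Lambda(z)$ really is an element of $\yhg$ (that is, of its $h$-adic completion), so that the maps \eqref{map1_10} and \eqref{map2_10} can be applied to it. By \eqref{blambdablambda} and Theorem \ref{thm_25}, $B_\Lambda(z)$ is obtained by letting the operators $B^+_{[n]}$ and $B^*_{[n]}(\cdot)^{-1}$ act on $1$. Hence modulo $h^k$ each coefficient is a finite expression in the generators $b_{ij}^{(r)}$, and $h$-adic convergence follows from the restrictedness established in Theorem \ref{thm_25}.

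Next, take either homomorphism $\varphi\colon\yhg\to\uh$ from Proposition \ref{pro_21}, namely \eqref{map1_10}, or \eqref{map2_10} when $N$ is even. This $\varphi$ is an algebra homomorphism, and it is continuous in the $h$-adic topology because it is $\CC[[h]]$-linear. Applying $\varphi$ coefficientwise to the relation $[B_\Lambda(x),B_\Gamma(y)]=0$ therefore gives $[\varphi(B_\Lambda(x)),\varphi(B_\Gamma(y))]=0$ in $\uh[[x^{\pm1},y^{\pm1}]]$.

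Finally, Remark \ref{tw_10} identifies the image of $\varphi$ with the (completed) orthogonal or symplectic twisted $h$-Yangian. Since the images lie in that subalgebra and pairwise commute, they form a commutative family there. The only point needing care is the well-definedness and completion issue in the first step; once that is settled, the rest is formal.
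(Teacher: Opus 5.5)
Your proposal is correct and follows the paper's own route: apply the algebra homomorphisms of Proposition \ref{pro_21} to the pairwise commuting coefficients from Proposition \ref{Prop4455}, and use Remark \ref{tw_10} to identify the image with the orthogonal (resp. symplectic) twisted $h$-Yangian. Your extra well-definedness step does no harm, but it is essentially automatic, since $B_\Lambda(z)$ is by construction a series with coefficients in $\Mccrit\equiv\yhg$.
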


At the end, we revisit the  Sklyanin determinant in the  algebra $\dyhoz$ framework; cf. \cite{MRS,Skly}. Introduce the $h$-permutation operator
$$
P^h=\sum_{i=1,\ldots ,N} e_{ii}\ot e_{ii}
+e^{h/2}\sum_{1\leqslant j<i\leqslant N} e_{ij}\ot e_{ji}
+e^{-h/2}\sum_{1\leqslant i<j\leqslant N} e_{ij}\ot e_{ji}.
$$
There exists a unique action of the symmetric group $\mathcal{S}_N$  on the tensor product space $\left(\CC^N\right)^{\ot N}$ so that for all transpositions $s_a=(a,a+1)$, $a=1,\ldots ,N-1$, we have
$
s_a \mapsto P_{s_a}^h\coloneqq P_{a\ts a+1}^h.
$
The action of the element $s\in \mathcal{S}_N$ with a reduced decomposition
$s=s_{a_1}\ldots s_{a_n}$ is given by $s\mapsto P_s\coloneqq P_{s_{a_1}}^h\ldots P_{s_{a_n}}^h$.  Moreover, the image of the antisymmetrizer under this action is given by
$$
A =\frac{1}{N!}\sum_{s\in \mathcal{S}_N}\mathop{sgn} s \cdot P_s^h.
$$

By \cite[Prop. 3.10]{KM}, the coefficients of the {\em quantum determinant}
$$
\mathop{qdet} T^+(u)=
\tr_{1,\ldots ,N} \, T_{[n]}^+ (u ,u-h\ldots ,u -(N-1)h)\vac \ts D^{\ot n}  A \in \Vcz [[u]]
$$
belong to the center of $\Vcz$. As with the critical level, by examining the image of the constant term of the quantum determinant $\mathop{qdet} T^+(u)$ under the map
$Y_{\Mcz }(\cdot , z)$, we find the series
$\mathop{Sdet} B(z)\in \Mcz [[z ]]$ and $\mathop{Sdet}\mathcal{B} (z)\in \dyhoz[[z^{\pm 1}]]$,
\begin{align*}
&\mathop{Sdet} B(z) = \tr_{1,\ldots ,N} \, B_{[N]}( z,ze^{-  h},\ldots ,ze^{-(N-1)h} ) \ts D^{\ot N}  A,\\
&
\mathop{Sdet}\mathcal{B} (z)=
\tr_{1,\ldots ,N} \,  B^+_{[N]}( z,ze^{-  h},\ldots ,ze^{-(N-1)h} )    B^*_{[N]}( z ,ze^{-  h },\ldots ,ze^{-(N-1)h } ) ^{-1}\ts D^{\ot N}  A.
\end{align*} 
We refer to both of the series as 
{\em Sklyanin determinants}. 
Their main property is given by the next proposition. It can be verified by the usual arguments which, in particular, rely on the properties of the antisymmetrizer and go in parallel  with \cite[Sect. 4.1]{MRS}.

\begin{pro}
\begin{enumerate}
\item  All coefficients of   $\mathop{Sdet}\mathcal{B} (z)$ belong to the center of   $\dyhoz$.
\item All coefficients of   $\mathop{Sdet} B(z)$ belong to the submodule of invariants $\mathfrak{z}(\Mcz)$.
\end{enumerate}
\end{pro}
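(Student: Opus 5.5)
The plan is to mirror the proof of Theorem \ref{thm_41}, with the Hecke idempotent $\Ec_\Lambda$ replaced by the antisymmetrizer $A$ and the level set to $c=0$. At $c=0$ the shifts $e^{\pm hc}$ in \eqref{re3} and in Lemmas \ref{lemma_41} and \ref{lemma_42} disappear, and the argument for $c=-N/2$ carries over to the level-zero setting.

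\textbf{Step 1.} First I record the analogue of \eqref{bneccc}: with $x=(z,ze^{-h},\ldots,ze^{-(N-1)h})$,
$$\mathcal{B}_{[N]}(x)\, A = A\,\mathcal{B}_{[N]}(x)\, A .$$
This follows from the level-zero identity $T^+_{[N]}(u,u-h,\ldots,u-(N-1)h)\vac\, A = A\, T^+_{[N]}(u,\ldots)\vac\, A$, which is used in \cite[Prop. 3.10]{KM}. I push it through the $\CC[[h]]$-module map \eqref{jjustamap} from the proof of Theorem \ref{thm_33} with $c=0$. Alternatively, it can be obtained directly by applying the fusion identity $\R_{k\,k+1}(e^{-h})\propto A_{k\,k+1}$-type relations to $B^+_{[N]}$ via \eqref{re_gen1} and to $B^*_{[N]}$ via \eqref{re_gen2}.

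\textbf{Step 2.} Next I state the analogues of Lemmas \ref{lemma_41} and \ref{lemma_42} for $n=N$, $c=0$, $\Ec_\Lambda$ replaced by $A$, and $x$ as above.
\begin{enumerate}
\item The exchange relation between $\mathcal{B}_{[N]}(x)$ and $B^{+}(y)$ (resp. $B^*(y)$) follows from \eqref{re_gen1} and \eqref{re_gen3} (resp. \eqref{re_gen2} and \eqref{re_gen3}) with $m=1$.
\item The crossing-symmetry identities of the form $(X_+)^\sim D^{\ot N} Z_+=D^{\ot N}$ follow from \eqref{csym}. Here I must check that the shift $e^{Nh}$ appearing in \eqref{csym} matches the argument shifts at $c=0$. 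I expect this to come out as in the $c=-N/2$ case, after adjusting the definitions of $X_+,Y_+,Z_+,W_+$ accordingly.
\item The absorption properties $UA=AUA$ for the $R$-matrix products $U$ hold because the fused product of $R$-matrices at the points $x_i=ze^{-(i-1)h}$ preserves the image of the $q$-antisymmetrizer. This is the standard fact used in \cite[Sect. 4.1]{MRS}.
\end{enumerate}

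\textbf{Step 3.} Then I run the trace computation from the proof of Theorem \ref{thm_41}(1) verbatim. I insert the exchange relation and use cyclicity of $\tr_{1,\ldots,N}$ together with $AD^{\ot N}=D^{\ot N}A$ to sandwich everything between copies of $A$. The crossing identities then cancel $X_+$ and $Z_+$. Finally, the partial-transpose trick $\tr\, GH=\tr\, G^{t}H^{t}$ cancels $W_+$ and $Y_+^{-1}$. The result is that $\mathop{Sdet}\mathcal{B}(z)$ commutes with $B^+(y)$, and symmetrically with $B^*(y)$. This proves (1). Part (2) then follows exactly as in Theorem \ref{thm_41}(2), since $\mathop{Sdet}B(z)=\mathop{Sdet}\mathcal{B}(z)1$ by Theorem \ref{thm_25}, because $B^*(z)1=1$.

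\textbf{Main obstacle.} I expect the hardest part to be Step 2(3), together with the matching of spectral shifts in Step 2(2). Proving $UA=AUA$ requires showing that the rank of $\R(e^{-h})$ at the fusion point produces the required factorization. Here one must also account for the non-unitary normalization $f(z)$, whose poles or zeros could interfere. I would first try to reduce to the case $\Ec_\Lambda$ with $\Lambda$ a single column, because then $\Ec_\Lambda=A$ and $c_k(\Lambda)=-(k-1)$, so $x_k=ze^{(k-1)h}$. This ordering is reversed relative to the stated one, so I would compensate by conjugating with the longest permutation or by a change of variables. After that, I can invoke \cite[Lemma 7.1]{BJK} directly, with the level adjusted from critical to zero.
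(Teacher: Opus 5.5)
The gap is in Step 2(2), and adjusting definitions cannot fix it. The products $X_+,Y_+,Z_+,W_+$ are not yours to choose: \eqref{re_gen1} and \eqref{re_gen3} force them. At level $c$ one gets $X_+=\big(R^{21}_{N1}(ye^{-2hc}/x)'\big)^{-1}$ and $Y_+=\RRR^{21}_{N1}(xye^{2hc})'$, while $Z_+=\big(R^{21}_{N1}(y/x)'\big)^{-1}$ and $W_+=\RRR^{21}_{N1}(xy)'$ do not depend on $c$. Crossing symmetry \eqref{csym} cancels $X_+$ against $Z_+$, and $W_+$ against $Y_+$, only when the relative shift is $e^{Nh}$, i.e.\ $-2c=N$. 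That is exactly why Theorem \ref{thm_41} is a critical-level statement. At $c=0$ you get $X_+=Z_+$ and $Y_+=W_+$, and identities such as $(Z_+)^\sim D^{\ot N}Z_+=D^{\ot N}$ are false, because the $\sim$-inverse brings in $R$ at arguments shifted by $e^{-Nh}$.

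The missing ingredient is what the paper means by ``properties of the antisymmetrizer'' (in the spirit of Molev--Ragoucy--Sorba): $A$ has rank one, since the $q$-antisymmetric $N$-th tensor power of $\CC^N$ is a line. Three consequences follow.
\begin{enumerate}
\item[(a)] $\tr D^{\ot N}A=\det D=1$, so $\mathop{Sdet}\mathcal{B}(z)$ is simply the element $s$ with $A\,\bc_{[N]}(x)\,A=A\,s$.
\item[(b)] With the correct one-sided absorption (see the next paragraph), $A\,\bc_{[N]}(x)=A\,s$.
\item[(c)] $AZ_+=\zeta A$ and $AW_+=\omega A$, with $\zeta,\omega$ invertible scalar series (quantum determinants of the $R$-matrix).
\end{enumerate}
At $c=0$ the exchange relation is the plain commutation relation $\bc_{[N]}(x)\,M=M\,\bc_{[N]}(x)$ with $M=Z_+B^+(y)W_+$. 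Sandwiching it as $A(\cdot)A$ gives $\zeta\omega\, s\,B^+(y)A=\zeta\omega\, B^+(y)\,s\,A$, so $[s,B^+(y)]=0$, with no trace or crossing argument needed. The $B^*(y)$ case is identical, since there too $X_*=Z_*$ and $Y_*=W_*$ at $c=0$. The sandwiched forms of your crossing identities, e.g.\ $A(Z_+)^\sim D^{\ot N}Z_+A=D^{\ot N}A$, do hold. But they hold because the scalars attached to $U$ and to $U^\sim$ are reciprocal, not because of \eqref{csym}.

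Two further points need correcting.

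First, Step 1 puts the absorption on the wrong side. One checks that $\R(e^{h})=(e^{-h/2}-e^{h/2})(1-P^h)$, so \eqref{rbartt} gives
\[
A\,T^+_{[N]}(u,u-h,\ldots,u-(N-1)h)\vac=A\,T^+_{[N]}(u,\ldots,u-(N-1)h)\vac\,A .
\]
Hence $A\,\bc_{[N]}(x)=A\,\bc_{[N]}(x)A$, and the Yang--Baxter equation likewise gives $AZ_+=AZ_+A$ and $AW_+=AW_+A$. The right-sided identity you state does not follow from \eqref{rbartt}; a second-order check already for $N=2$ shows it fails. This is the real content of the ordering mismatch you noticed. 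The paper's $\Ec_\Lambda$ comes with right absorption at the points $ze^{-c(\Lambda)h}$, whereas $A$ at the decreasing points $ze^{-(k-1)h}$ comes with left absorption. So the computation in Theorem \ref{thm_41} cannot be copied verbatim; with rank one, left absorption is all you need.

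Second, in part (2), $B^*_{[N]}(x)1$ is not $1$. It is the numerical product of the transposed $\R$-factors in \eqref{nbminus}. So $\mathop{Sdet}\mathcal B(z)1$ agrees with $\mathop{Sdet}B(z)$ only up to an invertible scalar series, again by rank one. This does not affect invariance, but ``$B^*(z)1=1$'' alone does not justify the equality you claim.
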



\section*{Acknowledgement} 
L. B. is member of Gruppo Nazionale per le Strutture Algebriche, Geometriche e le loro Applicazioni  (GNSAGA) of the Istituto Nazionale di Alta Matematica (INdAM) and part of the project MMNLP (Mathematical Methods in Non Linear Physics) of INFN.
L. B. was partially supported by the project Representation Theory and Applications, Bando Ateneo 2023 of Sapienza University of Rome.
The research reported in this paper was in part carried out during the second author’s visit to the School of Mathematics and Statistics at the Central China Normal University in Wuhan. He is grateful to the School for warm hospitality.
S. K. is partially supported by the Croatian Science Foundation under the project IP-2025-02-4720  and by the project ``Implementation of cutting-edge research and its application as part of the Scientific Center of Excellence for Quantum and Complex Systems, and Representations of Lie Algebras'', Grant No. PK.1.1.10.0004, co-financed by the European Union through the European Regional Development Fund - Competitiveness and Cohesion Programme 2021--2027.
This research was supported by the European Union -- NextGenerationEU through the National Recovery and Resilience Plan 2021-2026. Institutional grant of University of Zagreb Faculty of Science Strengthening scientific production, international presence and social impact of mathematical research (IK IA 1.1.3. Impact4Math). 
J. Z. is supported in part by the National Natural Science Foundation of China (grant no. 12571026).

\end{document}